\documentclass[11pt]{article}
\usepackage{amsmath}
\usepackage{amssymb}
\usepackage{amsthm}
\usepackage{anysize}
\usepackage{natbib}
\usepackage{xspace}




\usepackage{booktabs}
\usepackage{pdflscape}
\usepackage{dsfont}
\usepackage{enumerate}
\usepackage{graphicx}
\usepackage{floatrow}
\usepackage{bm}
\usepackage{multicol}
\usepackage{multirow}
\usepackage[dvips]{epsfig}

\usepackage{float}
\usepackage{color}
\usepackage[latin1]{inputenc}
\usepackage[english]{babel}
\usepackage{xcolor}

\usepackage{bbm}
\usepackage{mathtools}

\numberwithin{equation}{section}

\theoremstyle{plain}
\newtheorem{thm}{Theorem}

\newtheorem{lem}{Lemma}
\newtheorem{prop}{Proposition}
\theoremstyle{definition}
\newtheorem{defn}[thm]{Definition}
\theoremstyle{definition}

\theoremstyle{remark}
\newtheorem{oss}{Remark}
\newtheorem{esmp}{Example}

\marginsize{25mm}{25mm}{25mm}{25mm}

\newcommand{\I}{\mathds{1}}

\newcommand{\R}{\mathbb R}

\newcommand \balpha{{\boldsymbol \alpha}}
\newcommand \blambda{{\boldsymbol \lambda}}
\newcommand \btheta{{\boldsymbol \theta}}
\newcommand \bgamma{{\boldsymbol \gamma}}
\newcommand \bdelta{{\boldsymbol \delta}}


\renewcommand\S{{{\upshape S}$_{\balpha}(  \bdelta; \mu)$}\xspace}

\newcommand\Sb{{{\upshape S}$_{\alpha}(  \lambda, \beta; \mu)$}\xspace}


\newcommand \CTS{{{\upshape CTS}$_{\bgamma}( \btheta, \bdelta; \mu  )$} }



\newcommand \GTGS{{{\upshape GTGS}$_{\bgamma}(\balpha, \blambda, \btheta, \bdelta; \mu)$}\xspace}

\newcommand \GTGSs{{{\upshape GTGS}$^s_{\gamma}(\sigma, \alpha, \lambda, \theta  ; \mu)$}\xspace}

\newcommand \GTGSm{{{\upshape GTGS}$^-_{\gamma}(\alpha, \lambda, \theta, \delta; \mu  )$}\xspace}

\newcommand \GTGSp{{{\upshape GTGS}$^+_{\gamma}(\alpha, \lambda, \theta, \delta; \mu  )$}\xspace}

\newcommand \GTGSo{{{\upshape GTGS}$_\bgamma^0(\balpha, \blambda,  \bdelta; \mu  )$}\xspace}

\newcommand \GTGSos{{{\upshape GTGS}$^{0,s}_\gamma(\alpha, \lambda,  \delta; \mu  )$}  \xspace}

\newcommand \GTGSop{{{\upshape GTGS}$^{0,+}_\gamma(\alpha, \lambda,  \delta; \mu  )$}\xspace}

\newcommand \GTGSom{{{\upshape GTGS}$^{0,-}_\gamma(\alpha, \lambda,  \delta; \mu  )$}\xspace}


\newcommand \TGS{{{\upshape TGS}$_\balpha(\blambda, \btheta, \bdelta; \mu  )$}\xspace}

\newcommand \TGSs{{{\upshape TGS}$^s_\alpha(\lambda, \theta, \delta ; \mu )$}\xspace}

\newcommand \TGSm{{{\upshape TGS}$^-_\alpha ( \lambda, \theta, \delta; \mu  )$}\xspace}

\newcommand \TGSp{{{\upshape TGS}$^+_{\alpha}( \lambda, \theta, \delta; \mu  )$}\xspace}

 
\newcommand \TPL{{{\upshape TPL}$ (\alpha, \lambda, \theta, \delta)$}\xspace}

\newcommand \PL{{{\upshape PL}$ (\alpha, \lambda,  \delta)$}\xspace}

\newcommand \ML{{{\upshape ML}$ (\alpha, \lambda)$}\xspace}



\newcommand \TGSo{{{\upshape TGS}$^{0}_\balpha (\blambda,  \bdelta; \mu  )$}\xspace}


\newcommand \BL{{{\upshape BL}$(\balpha, \blambda,  \bdelta  )$}\xspace}

\newcommand \BG{{{\upshape BG}$(\blambda,\bdelta)$}\xspace}

\linespread{1.2}

\begin{document}

\title{Tempered geometric stable distributions and processes\footnote{
 Part of this work has been presented at the Third Italian Meeting on Probability and Mathematical Statistics, Bologna, June 2022 and at the FraCalMo Workshop, Bologna, October 2022. The author would like to thank Enrico Scalas, Lucio Barabesi, Luisa Beghin, Federico Polito and Piergiacomo Sabino for the helpful comments and discussions.}}
\author{Lorenzo Torricelli\footnote{University of Bologna, Department of Statistical Sciences ``P. Fortunati''. Email: lorenzo.torricelli2@unibo.it}  }
\date{\today}

\maketitle

\begin{abstract}
We introduce a notion of geometric tempering using exponentially-dampened Mittag-Leffler tempering functions and closely investigate the univariate case. Characteristic exponents and cumulants are calculated, as well as spectral densities. Absolute continuity relations are shown, and short and long time scaling limits of the associated L\'evy processes analyzed.
\end{abstract}

\noindent {\bf{Keywords}}: Tempered stable distributions, geometric stable distributions, L\'evy processes, Mittag-Leffler function, generalized hypergeometric functions

\section{Introduction}
  

In physics and natural sciences, systems evolving according to L\'evy stable distributions have long been observed (see \citet{chechkin2008introduction} for a survey). Modeling these phenomena as ``L\'evy flights/walks'', i.e., random processes with stationary independent stable increments has however the serious drawback of producing  dynamic probabilistic representations with infinite variance, which is problematic both  theoretically and in practice. In the seminal works of \citet{man+sta:95} and \citet{kop:95}, L\'evy a remedy was proposed by introducing a truncation procedure that, while inducing a minimal perturbation of the central part of distribution, impacted the tails in such a way as recovering finiteness of variance, and thus ultimately the Gaussian behavior of the process at large times. Since the observed convergence is very slow, for most practical purposes the L\'evy stable empirical paradigm is robust to this modification. This idea proved be very successful and enjoyed a vast range of applications even outside physics, most notably in economics and finance: see  \citet{stanley2003statistical} and the pioneering papers of \citet{boy+lev:00} and \citet{car+al:02} in option pricing.
In particular Koponen's idea of using a negative exponential cutoff function, yielding to a tractable analytic structure of the law, proved to be  very consequential. Such procedure began to be known under the name of ``tempering'' of stable distributions, and with this word it was originally meant  an exponential tilting of either the probability density function or the L\'evy measure.
 Based on the remark that a negative exponential is a simple example of a completely monotone function, 
 a general theory of tempering stable distributions using completely monotone functions is introduced by \citet{ros:07}, who shows that the properties of tempered laws can be fully described  using characteristic measures (``spectral measures''). 
Specific parametric examples include the $p$-tempered stable law of \citet{gra:16}, the generalized tempered stable process of \citet{ros+sin:10},  the modified and KR tempered stable  laws in \citet{kim+al:08, kim+al:09}. Other possible choices of tempering functions and related distributions are explored in \citet{ter+woy:06}.

In this work we introduce a novel class of tempered distributions, using as a tempering function an exponentially dampened negative Mittag-Leffler function.  The Mittag-Leffler function is defined as
\begin{equation}
E_\alpha(z)=\sum_{k=0}^\infty\frac{z^{k}}{\Gamma(k+\alpha)}, \qquad z \in \mathbb C, \quad \alpha \in (0,1]
\end{equation}
where $\Gamma$ stands for the Euler's gamma function.  For $r>0$ we thus consider distributions whose L\'evy measures are given in polar coordinates, and with radial part of the form
\begin{equation}\label{eq:levmesintro}
\frac{e^{-\theta r}E_{\alpha}(-\lambda r^\alpha)}{r^{1+\gamma}} dr, \: \quad r,\lambda, \theta>0,   \, \alpha \in (0,1), \, \gamma \in (0,1).
\end{equation}
In the above we see combined a stable inverse-power component, a classic exponential tempering function, and a negative Mittag-Leffler factor. The latter is a type of generalized exponential function with rapid decay around zero and power law tails.

The genesis of the proposed distribution can be retraced in the concept of geometric stability, introduced in \citet{kle+al:84}, answering the problem of finding a class of random variables which are infinitely-divisible under random geometric  summation.  A geometric stable law $X$ can be defined as an infinitely divisible distribution whose characteristic exponent $\psi_X$ can be written as a log-transform of a stable characteristic exponent $\psi_Z$ i.e. 
\begin{equation}\label{eq:geomstabintro}
\psi_X(z)=\log \left(1+ \psi_Z(z) \right)
\end{equation}
for some stable distribution $Z$.
Theory and applications are developed by, among the others,   \citet{kle+al:84},  \citet{lin:98},  \citet{san+pil:14}, \citet{mit+rac:91}, \citet{koz+rac:99}, \citet{koz+pod:01}. Known probability distributions such as the Laplace distribution and  \citet{pil:90} Mittag-Leffler law are included in this class. Convolutions of geometric laws lead to the Linnik distribution  (\citet{lin:63}, \citet{pak:98}, \citet{chr+sch:01}) and the Erlang/gamma distribution as special cases.   
Tempered versions of the positive Linnik law (TPL) have been considered in \citet{bar+al:16b} \citet{bar+al:16a} 
 and \citet{tor+al:21}. These are obtained by replacing the stable law $Z$ in \eqref{eq:geomstabintro} with a positive exponentially tempered stable variable. This makes it possible to introduce a notion of tempered geometric stability on the full space, by assuming a TPL for the radial part and then combining it with a finite measure on the unit sphere. We call these laws tempered geometric stable (TGS) distributions. As it turns out the L\'evy measure of a TPL distribution constructed coincide with the mixed exponential/Mittag-Leffler in \eqref{eq:levmesintro} when $\gamma=0$. Reverting to \eqref{eq:levmesintro} in its full generality, i.e.  considering instead an exponent $\gamma \in (0,2)$ at the denominator,  determines what we call a generalized tempered geometric tempered stable law (GTGS), the general object of investigation of this study. 
 
With regards to applications, the main motivation to study geometric tempering comes from economics and finance. A large number of independent statistical studies around the turn of the century (e.g. \citet{lux1996stable}, \citet{longin1996asymptotic}, \citet{gopikrishnan1998inverse}, \citet{plerou1999scaling}, \citet{gopikrishnan1999scaling})  have reported strong evidence of power law decay (and scaling) of market returns which survive even at long lags, and with typical Pareto exponents  of about 3. Therefore, according to these estimates, the existence of the variance in financial data is not called into question, while existence of higher moments remains a more delicate issue. It would then seem appropriate to seek dynamic return models whose distributions is of stable type in its central part and exhibits heavy law tails at longer lags, albeit with finite variance, and hence retaining Gaussian limiting properties.  Bringing these features together is not easy.  As argued in  \citet{chechkin2008introduction} 
 and \citet{stanley2003statistical}, abrupt density truncation, or exponential tempering  as proposed respectively by \citet{man+sta:95} and \citet{kop:95}, determine too much of a sharp cutoff and do not maintain the power law decay  for all the orders of magnitude for which it is observed in the empirical data, typically ranging from milliseconds to several days. One possible solution is proposed in \citet{sokolov2004fractional}, where truncation is achieved by means of a model whose density function solves a Fokker-Planck equation of fractional order. In contrast, in the present work we show how to obtain heavy tails with finite variance at arbitrary lags by tempering stable laws (and the associated L\'evy processes) using as a truncation function the Mittag-Leffler function, which corresponds to the particular case $\theta=0$ in \eqref{eq:levmesintro}. As we shall see the purely probabilistic approach followed leads to tractable analytic formulae and transparent asymptotic relations.

In this work we  find the characteristic exponents of GTGS laws, their TGS subcases, as well as those of their purely Mittag-Leffler tempered subcases, and discuss their analytical properties. Such exponents involve two interesting special functions, namely Dotsenko's $_2R_1$ generalized hypergeometric  function (\citet{dot:91}, \citet{vir+al:01}), and Lerch's transcendent $\Phi$. 
 We determine cumulants and spectral densities, as well as short and long time scaling limiting behavior. In particular, in the pure Mittag-Leffler case we observe that the large parameter scaling can follow a classic Gaussian limit, but can also converge to stable process, depending on the stability and Mittag-Leffler parameters. 
  Moreover, we analyze the absolute continuity conditions of GTGS laws within their own class as well as with with respect to stable laws.

 We review the basic notions needed and fix the notation in Section 2. In Section 3  GTGS distributions are introduced. Their characteristic functions and cumulants are discussed in Section 4. In Section 5 we analyze  spectral measures, short and long time limits as well as absolute continuity properties.  S In Section 6 we conclude and discuss possible developments.

\section{Preliminaries}
We begin by establishing the notation and recalling some concepts and notions required throughout the paper.
\subsection{Infinitely divisible laws and L\'evy processes}

Throughout the paper we fix a filtered probability space $(\Omega, \mathcal F, (\mathcal F_t)_{t \geq 0}, P)$ to which all the processes we mention are adapted. For a $d$-dimensional random variable $X$ on such space we denote by $\Psi_X: \mathbb R^d \rightarrow \mathbb C$ its characteristic function
\begin{equation}
\Psi_X(z)=E[e^{i \langle z , X \rangle}].
\end{equation}
An infinitely-divisible (i.d.) random variable  is fully characterized  by its characteristic exponent $\psi_X(z)$ i.e, a function $\psi_X: D \subseteq \mathbb R^d \rightarrow \mathbb C$ such that
\begin{equation}
\Psi_X(z):=E[e^{i \langle z , X \rangle}]=e^{t \psi_X(z)}.
\end{equation}
The exponent $\psi_X$ can be written in terms of a characteristic triplet  $(\mu, \Sigma, \nu)$ with $\mu \in \mathbb R^d$, $\Sigma$ is a positive definite $d \times d$ matrix, $\nu$ is a measure on $\mathbb R^d$ such that $\nu(0)=0$, $\int_{\mathbb R^d} (| x|^2 \wedge 1) dx <\infty$, with $| \cdot|$ indicating the Euclidean norm, and 
\begin{equation}\label{eq:LK}
\psi_X(z)=  i  \langle z,  \mu  \rangle- \langle z, \Sigma z \rangle/2+\int_{\mathbb R^d} \left( e^{i \langle z, x \rangle} -1 -   i \langle   z, x\rangle \I_{\{| x |<1\}}\right)\nu( d x)
\end{equation}
where $\langle \cdot,\cdot \rangle$ is the standard inner product on $\mathbb R^d$. The measure $\nu$ is called the L\'evy measure. When $\nu$ is absolutely continuous its corresponding density is the L\'evy density. The function  $x \I_{\{| x |<1\}}$ ensures the convergence of the integral in 0, and is called a truncation function. Equation \eqref{eq:LK} goes under the name of L\'evy-Khintchine representation. When $d=1$ and $X$ is a  positively supported i.d. random variable then one can also use the Laplace exponent $\phi$, defined to be the complex valued function such that
\begin{equation}
L_X(z):=E[e^{-s X}]=e^{-\phi_X(s)}, \qquad  s>0
\end{equation}
with

\begin{equation}\label{eq:Bern}
\phi_X(s)=a + bs +\int_{\mathbb R_+}(1-e^{-s x}) \nu(dx)
\end{equation}
for some triplet $(a, b,\nu )$, with $a, b  \geq 0$ and a positively-supported L\'evy measure $\nu$ satisfying $\int_{\mathbb R_+}(x \wedge 1)\nu(dx) <\infty$. One generic function $f$ enjoying representation \eqref{eq:Bern} is called a Bernstein function. A Bernstein function $f \in C^\infty(\mathbb R_+)$ is equivalently characterized by the property of being  such that, for all $n \in \mathbb N_0$

\begin{equation}
(-1)^{n-1}f^{(n)}(x) \geq 0.
\end{equation}
indicating the $n$-th derivative with the parenthetical exponent.
A completely monotone function on $\mathbb R_+$ is a function of class $C^\infty$ such that $f'$ is a Bernstein function.  A classic reference for Bernstein functions is \citet{sch+von:12}.

A  real-valued r.v. $X$ i.d.  is said to be self-decomposable (s.d.) if  for all $\alpha \in (0,1)$ can be written in law as $X=\alpha X+R_\alpha$, for some r.v. $R_\alpha$  independent of $X$. Equivalently, a real valued s.d. distribution is characterized by the property that its L\'evy measure is absolutely continuous and its density $v(x)$ writes as $v(x)=k(x)/|x|$, with $k(x)$ a function increasing on $\R_-$ and decreasing on $\R_+$. The function $k$ is  called the canonical density of $X$.

\bigskip

A L\'evy process $X=(X_t)_{t \geq 0}$ on $\mathbb R^d$ is a stochastically continuous process with independent and stationary increments. For a L\'evy process $X$,  $X_t$ is i.d. for all $t>0$. Conversely, given an i.d. random variable $\mathcal X$ there exists a unique in law L\'evy process $X$ such that $X_1=\mathcal X$ (\citet{sat:99}, Theorem 7.10). The characteristic exponent $\psi_X$ of a L\'evy process $X$ is by definition $\psi_{X_1}$. With abuse of terminology we shall refer to a L\'evy process by the name of its unit time  law. 



\bigskip

\subsection{Stable and tempered stable laws and processes}

A stable  S$_\alpha(\lambda, \beta; \mu)$ r.v. on $\R$, with $\alpha \in (0,2]$, $\mu \in \mathbb R, \lambda>0$ is one such that the equality $\Psi_X(z)^\alpha=\Psi_X(z^{1/\alpha})e^{i z \mu}$ holds. If $\mu=0$ the r.v. is said to be strictly stable. The r.v. $X$ is i.d. with characteristic exponent 
\begin{equation}\label{eq:stabled1}
\psi_X(z) =\left\{ \begin{array}{cc}
  -\lambda| z |^\alpha \left( 1 - i \beta \mbox{sgn}(z) \tan (\pi \alpha/2)\right) + i z \mu  & \mbox{ for } \alpha \neq 1\\
 -\lambda| z |\left( 1+ i \beta \frac{2}{\pi}  \mbox{sgn}(z)\log| z | \right) + i z \mu  & \mbox{ for }\alpha=1
\end{array} \right.
\end{equation}
where $\beta \in [-1,1]$, $z \in \mathbb R$. For $\beta=0$ the distribution is symmetric, for $\beta=\pm 1$ totally skewed respectively to the right and left. 
Stable laws are i.d., and an $\alpha$-stable L\'evy process $X=(X_t)_{t \geq 0}$ is one for which $X_1$ is stable (equiv. $X_t$ is stable for all $t$). 

On $\R^d$, $d \geq 1$, it is instead easier to define a stable process by means of its L\'evy triplet $(\mu, 0, \nu)$ where for $B \in \mathcal B(\mathbb R^d\setminus \{ 0\})$ we have the following polar representation 
 \begin{equation}\label{eq:stablegen}
 \nu(B)=\int_{S^{d-1}} \sigma(du) \int_{\mathbb R_+} \I_{B}(r u) \frac{dr}{r^{1+\alpha}}
 \end{equation}
with $S^d$ indicating the $d$-dimensional sphere. We denote the class of $\alpha$-stable distributions on $\mathbb R^d$ with S$_\alpha(\sigma, \mu)$. On $\mathbb R$, sometimes cases with different stability indices for the two distinct points in $S^0$ are taken into account, when the L\'evy measure $\nu$ has an absolutely continuous density $ v(x)$ of the form
\begin{equation}
v(x)=\frac{\delta_+}{x^{1+\alpha_+}}\I_{\{ x>0 \}}+\frac{\delta_-}{|x|^{1+\alpha_-}}\I_{\{ x <0 \}},
\end{equation}
with $\alpha_+,\alpha_- \in (0,2]$. When $\alpha_+=\alpha_-$ the constants $\delta_+$ and $\delta_-$ are related to the classical parametrization 	\eqref{eq:stabled1} by $\lambda=\delta_+ +\delta_-$ and $\beta=(\delta_+ -\delta_-)/(\delta_+ +\delta_-)$. In this case we denote the class of the stable distributions by S$_{\balpha}(\bdelta, \mu)$, with $\balpha=(\alpha_+, \alpha_-) \in (0,2] \times (0,2]$, $\bdelta=(\delta_+, \delta_-) \in \mathbb R^2_+$, $\mu \in \mathbb R$.  For a detailed survey on stable distribution see \citet{sat:99}, Chapter 3.

\bigskip


 A (classical) tempered stable distribution is an i.d. distribution with L\'evy measure $\nu$ given by 
\begin{equation}
\nu(dx)=\delta_+ \frac{e^{-\theta_+} x}{x^{1+\alpha_+}}\I_{\{ x>0 \}}+\delta_- \frac{e^{-\theta_-|x|}}{|x|^{1+\alpha_-}}\I_{\{ x <0 \}} \end{equation}
for $\alpha_+, \alpha_- \in (0,2]$, $\delta_+, \delta_- \geq 0$, see e.g. \citet{kop:95}, \citet{boy+lev:00}, \citet{car+al:02} for  applications, and \citet{kuc+tap:13} for a theoretical analysis. We shall use the notation  CTS$_{\balpha}( \btheta, \bdelta; \mu)$ with $\balpha=(\alpha_+, \alpha_-) \in \mathbb (0,2] \times (0,2]$,  $\btheta=(\theta_+, \theta_-) \in \mathbb R^2_+$, $\bdelta=(\delta_+, \delta_-) \in \mathbb R^2_+, \mu \in \mathbb R$. Special cases, other than the stable distributions when $\theta_\pm=0$, include the  bilateral Gamma BG$(\btheta,\bdelta)$  laws (e.g. \citet{kuc+tap:08})  is obtained for $\balpha=0$, and the ordinary positively-supported gamma G$(\theta, \delta)$ law, obtained for $\alpha_+=\alpha_-=\delta_-=0$.

\bigskip

A general approach to tempering stable laws is considered in \citet{ros:07}. Let $\sigma$ be a finite measure on $S^{d-1}$ and a function $q: \R_+ \times S^{d-1} \rightarrow \R^d$, $(r,u) \mapsto q(r,u)$, completely monotone in $r$ for all $u$. 
Then for $\alpha \in (0,2]$ an $\alpha$ tempered stable process on $\mathbb R^d$ is one with L\'evy triplet $(\mu,0, m)$ where $m$ is given in polar coordinates by
\begin{equation}\label{eq:rosrep}
m(dr, du)=\frac{q(r,u)}{r^{1+\alpha}}{\sigma(du)} dr.
\end{equation}
Since $q$ is completely monotone in the first variable, by the Bernstein Theorem (\citet{sch+von:12}, Theorem 1.4) there exists a family of probability measures $(Q_u)_{u \in S^{d-1}}$ supported on $\R_+$ for which 
\begin{equation}\label{eq:bernrep}
q(r,u)=\int_0^\infty e^{- r s}Q_u(ds).
\end{equation}
 Therefore for $B \in \mathcal B(\mathbb R^d \setminus \{ 0\})$ we can introduce two measures $Q$ and $R$ respectively by
\begin{equation}\label{eq:specmeas}
Q(B)=\int_{S^{d-1}} \sigma(du) \int_{\mathbb R_+} \I_{B}(r u)Q_u(dr) 
\end{equation}
and 
\begin{equation}\label{eq:rosmeas}
R(B)=\int_{\mathbb R^d} \I_{B}\left(\frac{x}{\|x\|^2}\right)\|x\|^\alpha Q(dx) .
\end{equation}
It turns out that $R$ and $Q$ are dual in the sense that
\begin{equation}
Q(B)=\int_{\mathbb R^d} \I_{B}\left(\frac{x}{\|x\|^2}\right)\|x\|^\alpha R(dx).
\end{equation}
We call  $Q$ the spectral measure and $R$ the Rosi\'nsky measure.
A given tempered $\alpha$ stable distribution is fully characterized by its  measures $Q$ or $R$, and  a drift vector $\xi \in \mathbb R^d$. 
 The measure $Q$ is useful for computer simulations (\citet{coh+ros:07}), whereas $R$ is pivotal to describe the analytical properties of the law.
 
  With abuse of notation, we denote the class of tempered stable distributions in the sense of Rosi\'nsky by TS$_\gamma(\sigma, q, \mu)$, TS$_\gamma(Q, \mu)$, or TS$_\gamma(R, \mu)$, which covers all  the different specifications illustrated above.

\subsection{Geometric stability and tempered geometric stability}\label{subsec:GS}

Geometric  stable  (GS) laws were introduced by \citet{kle+al:84} as a solution to the problem of characterizing i.d. laws whose infinite divisibility property holds true with respect to geometric summation. A geometric strictly stable law $X$ on $\mathbb R^d$ is one such that 

\begin{equation}\label{eq:geomtransf}
\psi_X(z)=\log \left(1+ \psi_Z(z) \right)
\end{equation}
is the characteristic exponent of some stable r.v. $Z$ on $\mathbb R^d$. 
In the case $d=1$ and when $\beta=0$ in \eqref{eq:stabled1}  we have the symmetric law with Laplace exponent
\begin{equation}
\phi_X(z)=\delta \log\left({1+ \lambda |z|^\alpha}\right), \qquad \delta, \lambda >0, \quad\alpha \in (0,2], \quad z>0.
\end{equation} The arising probability distribution is often called the Linnik distribution. When $\alpha=2$, $\delta=\lambda=1$ this reduces to the well-known Laplace distribution. 

Geometric stable processes and their applications have been studied in e.g. \citet{mit+rac:91},  \citet{vsi+al:06} \citet{koz+rac:99}, \citet{koz+sam:99},  \citet{kot+al:01}, \citet{koz+pod:01}. Positively-supported Linnik \PL laws can be seen as extension of the Mittag-Leffler \ML law introduced in \citet{pil:90}. 
We have, for the L\'evy measure of a \PL r.v. $X$ the L\'evy density
\begin{equation}\label{LevML}
v_X(x)=\frac{\delta} {x}E_\alpha\left(-\frac{x^\alpha}{\lambda}\right), \qquad x>0, \quad \alpha \in (0,1)
\end{equation}
and PL$(\alpha, \lambda, 1) \equiv $ \ML.

In \citet{bar+al:16a} a tempered version of the Linnik positive laws, denoted \TPL was investigated, and their associated processes later studied in \citet{kum+al:19a} and \citet{tor+al:21}. The resulting operation leads to the Laplace transform
\begin{equation}\label{eq:tilt}
\phi_{X}(s)= {\delta} \log \left({1+\lambda((\theta+s)^\alpha-\theta^\alpha)}\right)
\end{equation}
 Recalling that the Laplace exponent of a  CTS$^+_\alpha(\theta, \lambda; 0)$ law is $\lambda((\theta+s)^\alpha-\theta^\alpha)$, we see that		 \eqref{eq:tilt} is equivalent to requiring that \eqref{eq:geomtransf} now holds for a Laplace exponent of some \emph{tempered} stable positive law $Z$.  The expression for the L\'evy density is
\begin{equation}\label{eq:LevTL}
v_X(x)= \delta \frac{e^{-\theta x}}{x}E_\alpha\left(\frac{\lambda \theta^\alpha-1}{\lambda}x^\alpha\right), \qquad x>0, \alpha \in (0,1).
\end{equation}

TPL distributions seem to have first appeared in \citet{mer+al:11}, Example 5.7, to describe the  waiting times of a Poisson process subordinated to an inverse tempered stable subordinator (an increasing L\'evy process). A closed form expression in terms of the three-parameter Mittag-Leffler function is also available for the p.d.f.. See   \citet{bar+al:16a} and \citet{tor+al:21} for details.



\subsection{Special functions}

We denote with $E_{a, b}^c(z)$, $z  \in \mathbb C$, the \citet{pra:71} three-parameter Mittag-Leffler function given by \begin{equation}\label{eq:ML3}
E_{a, b}^c(z)=\sum_{k=0}^\infty \frac{(c)_k z^k}{k!\Gamma(ak+b)}, \qquad \text{Re}(a) >0, \quad \text{Re}(b)>0, \quad c, z \in \mathbb C,
\end{equation}
where $\Gamma(\cdot)$ is the Euler's Gamma function and $(c)_k=\Gamma(c+k)/\Gamma(c)$ the Pochhammer symbol. The standard and two-parameter Mittag-Leffler functions  $E_a$ and $E_{a, b}$ coincide with $E_{a,1}^1$ and $E_{a,b}^1$ respectively.  All these functions are entire. 

The following leading asymptotic order of the three-parameter Mittag-Leffler function (e.g. \citet{gar+gar:18}), as $x \in \mathbb R$, $x \rightarrow \infty$ will be useful
\begin{equation}\label{eq:MLasymptInf3pam}
E_{a,a c}^c(-x^a) \sim - c \frac{x^{-a(c+1) }}{\Gamma(-a )}, \qquad a \in (0,1), \quad c>0
\end{equation}
from which we can recover  the well-known formula (e.g. \citet{hau+al:11})
\begin{align}\label{eq:MLasymptInf}
E_\alpha(- x^\alpha) \sim &\frac{x^{-\alpha}}{\Gamma(1-\alpha) }, \quad x   \rightarrow \infty, \qquad \alpha \in (0,1). 
\end{align}
From the definition it is also clear that
\begin{align}  E_\alpha(- x^\alpha) \sim & 1 - \frac{x^\alpha}{\Gamma(1+ \alpha)} \sim \exp\left(- \frac{x^\alpha}{\Gamma(1+\alpha)} \right) , \quad x \rightarrow 0. \label{eq:MLasympt0}
\end{align}
See  \citet{hau+al:11} or \citet{gor+al:14} for a comprehensive introduction on Mittag-Leffler functions.
\bigskip

The $_2R_1$ generalized hypergeometric Wright-type function studied by \citet{dot:91} and \citet{vir+al:01}, is given by for $\tau >0, a,b,c \in \mathbb C, c \not \in  \mathbb Z_{\leq 0}$
\begin{equation}\label{eq:DotHG}
_2R_1(a,b,c, \tau; z)= \frac{\Gamma(c)}{\Gamma(b)}\sum_{k=0}^\infty \frac{(a)_k \Gamma(b+\tau k)}{\Gamma(c+ \tau k)} \frac{z^k}{k!}, \qquad \mbox{Re}(c-b-a)>0.
\end{equation}
Under this assumptions the power series expansion above converges absolutely for $|z| \leq 1$, but the $_2R_1$ function can be continued analytically on $\mathbb C \setminus(1,\infty)$. 
 For $|z|<1$, $|\text{Arg}(-z)|<\pi$  the expression of the continuation is given by e.g \citet{kil+sa:03}.   Notice that 
  $_2R_1(a,b,c, 1; z)=\, _2F_1(a,b,c; z)$, Gauss' hypergeometric function, so that in particular  $_2R_1(a,b,b, 1; z)=(1-z)^{-a}$  .
Furthermore, the generalized  hypergeometric $_2R_1$ can be represented in terms of the normalized Fox-Wright 
(\citet{wri:35}) function $_p \Psi^*_q$ as
\begin{equation}\label{eq:Dots}
_2R_1(a,b,c, \tau; z)=\, _2\Psi^*_1 \left[ \begin{array}{c c}  (a,1) & (b, \tau)  \\ (c, \tau) &  \end{array} ; z \right].
\end{equation}
See also \citet{bra:63} and \citet{kar+pri:19} for details on the analytic continuation of the $_p\Psi_q$ function.

The Lerch transcendent function $\Phi$ is defined as the convergent series
\begin{equation}\label{eq:Lerch}
\Phi(z, s, a)=\sum_{k=0}^\infty \frac{z^k}{(a+k)^s}, \qquad z, s \in \mathbb C, |z|<1,  a \notin \mathbb Z_{\leq 0}.   \end{equation}
It relates to the Polilogarithm Li$_s(z)$ through Li$_s(z)/z=\Phi(z,s,1)$  and extends the Hurwitz/Riemann Zeta functions, in that $\zeta(s,a)=\Phi(1,s,a)$, $\zeta(s)=\Phi(1,s,0)$.  Again, even though the series representation above is only valid for $|z| < 1$, and for $|z|=1$ if Re$(s)>0$, analytic continuations are available, and have been recently studied (\citet{fer+al:17}).

\section{Geometric tempered stable distributions}

 
In order to construct a multivariate geometric tempered stable distributions we follow the approach in \citet{ros:07} of defining a L\'evy measure in polar coordinates using a positive probability measure for the radial part and a  measure $\sigma$ to move it along the unit sphere.

 Let $d \geq 1$ and $\theta, \lambda: S^{d-1} \rightarrow \mathbb R_+$, $\alpha: S^{d-1} \rightarrow (0,1]$  continuous  functions.  Let the tempering function in \eqref{eq:rosrep} be given by

\begin{equation}\label{eq:TGS}
q(r,u;\alpha, \lambda, \theta )=e^{-\theta(u) r}E_{\alpha(u)}\left(-\lambda(u) r^{\alpha(u)}  \right).
\end{equation}
In order to avoid degeneracies, the following technical assumption is needed:
\begin{itemize}
\item[\textbf{(B)}] either $\theta$, or both $\alpha$ and $\lambda$, are bounded away from zero.
\end{itemize}
To begin with one needs to show that \eqref{eq:TGS} is a legitimate tempering function, and that this is so even in the case $\gamma=0$. 

\begin{prop}\label{prop:legit} Let $\sigma(du)$ be a finite measure on $S^{d-1}$. For $\gamma \in (0,2)$ the function 
$q(r,u;\alpha, \lambda, \theta )$ is a tempering function in the sense of {\upshape \citet{ros:07}}. Furthermore, under assumption {\upshape \textbf{(B)}}.  
\begin{equation}
m_0(dr,du; \sigma, \alpha, \lambda, \theta):=\frac{q(r,u;\alpha, \lambda, \theta )}{r}\sigma(du)dr
\end{equation} 
is a L\'evy measure. Therefore under the given conditions
\begin{equation}
m_\gamma(dr,du; \sigma, \alpha, \lambda, \theta):=
\frac{q(r,u;\alpha, \lambda, \theta )}{r^{1+\gamma}}\sigma(du)dr
\end{equation}
is a L\'evy measure for all $\gamma \in [0,2)$.
 \end{prop}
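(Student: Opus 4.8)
The plan is to verify separately the two ingredients behind Rosi\'nski's construction: (i) that the radial factor \eqref{eq:TGS} is completely monotone in $r$ for every $u$, so that it is an admissible tempering function and $m_\gamma$ has the form \eqref{eq:rosrep}; and (ii) that $m_\gamma$ integrates $|x|^2\wedge 1$. I would treat the borderline value $\gamma=0$ first, since that is the only place where assumption \textbf{(B)} is actually used, and then obtain the whole range $\gamma\in(0,2)$ by an elementary comparison.

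For (i), fix $u$ and write $a=\alpha(u)\in(0,1]$, $\ell=\lambda(u)\ge 0$, $\vartheta=\theta(u)\ge 0$. It is classical (Pollard; see e.g.\ \citet{sch+von:12} or \citet{gor+al:14}) that $x\mapsto E_a(-x)$ is completely monotone on $\R_+$ for $a\in(0,1]$, and $r\mapsto \ell r^{a}$ is a Bernstein function; since $f\circ g$ is completely monotone whenever $f$ is completely monotone and $g$ is a Bernstein function, $r\mapsto E_a(-\ell r^{a})$ is completely monotone, and multiplying by the completely monotone factor $e^{-\vartheta r}$ preserves this. Hence $q(r,u;\alpha,\lambda,\theta)$ is completely monotone in $r$, with $q(0^+,u;\alpha,\lambda,\theta)=E_a(0)=1$, so $0<q\le 1$; joint measurability in $(r,u)$ follows from the continuity of $\alpha,\lambda,\theta$. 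This makes $q$ a tempering function in the sense of \citet{ros:07}, and already exhibits, for any finite $\sigma$ and any $\gamma\in(0,2)$, $m_\gamma$ as the L\'evy measure of a tempered $\gamma$-stable law.

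For (ii) with $\gamma=0$, abbreviating $q(r,u)=q(r,u;\alpha,\lambda,\theta)$, it suffices to bound $\int_{S^{d-1}}\sigma(du)\int_0^\infty (r^2\wedge 1)\,q(r,u)\,r^{-1}\,dr$. Near the origin $\int_0^1 r\,q(r,u)\,dr\le \tfrac{1}{2}$ because $q\le 1$, uniformly in $u$. For the tail I would split according to \textbf{(B)}: if $\theta$ is bounded away from $0$ by some $\theta_0>0$, then $q(r,u)\le e^{-\theta_0 r}$ and $\int_1^\infty e^{-\theta_0 r}\,r^{-1}\,dr<\infty$; otherwise $\alpha_0:=\inf_u\alpha(u)>0$ and $\lambda_0:=\inf_u\lambda(u)>0$, and, dropping the bounded exponential factor, the Mittag-Leffler asymptotics \eqref{eq:MLasymptInf} — together with the continuity of $E_a$ in $a$ over the compact set $[\alpha_0,1]$, the endpoint $a=1$ giving the even faster decay $E_1(-x)=e^{-x}$ — supply constants $C,R_0>0$ independent of $u$ with $E_{\alpha(u)}(-\lambda(u)r^{\alpha(u)})\le C\,r^{-\alpha_0}$ for $r\ge R_0$, so $\int_1^\infty q(r,u)\,r^{-1}\,dr<\infty$ uniformly in $u$. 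In either case the inner integral is bounded uniformly in $u$, and integration against the finite measure $\sigma$ yields $\int(|x|^2\wedge 1)\,m_0(dx)<\infty$, i.e.\ $m_0$ is a L\'evy measure.

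Finally, for $\gamma\in(0,2)$ the estimate near the origin becomes $\int_0^1 r^{1-\gamma}q(r,u)\,dr\le (2-\gamma)^{-1}$ (using $\gamma<2$ and $q\le 1$), while on $[1,\infty)$ one has $r^{-1-\gamma}\le r^{-1}$, so the tail integral is dominated by the $\gamma=0$ one, already shown finite uniformly in $u$; integrating against $\sigma$ establishes the claim for all $\gamma\in[0,2)$. The step I expect to be the genuine obstacle is the uniform-in-$u$ Mittag-Leffler tail bound in the second case of \textbf{(B)}: the pointwise decay rate $O(r^{-\alpha(u)})$ has a $u$-dependent exponent and prefactor, so one must upgrade \eqref{eq:MLasymptInf} to an estimate valid simultaneously over the sphere via joint continuity of the Mittag-Leffler function; by contrast the complete-monotonicity argument and the behavior near $0$ are routine.
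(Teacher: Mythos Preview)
Your proof is correct and follows essentially the same route as the paper: complete monotonicity via Pollard plus the composition/product rules, then the L\'evy-integrability estimate split into a near-zero part (handled by $q\le 1$) and a tail part (handled under \textbf{(B)} by either the exponential factor or the Mittag-Leffler asymptotics), with the case $\gamma\in(0,2)$ covered by Rosi\'nski's framework (your direct comparison with the $\gamma=0$ tail is a harmless variant). Your caution about the uniformity in $u$ of the Mittag-Leffler tail bound is well placed---the paper simply writes $q(r,u)\le E_{\alpha_*}(-\lambda_* r^{\alpha_*})$ for $r\ge 1$ without justification, whereas your compactness/joint-continuity argument actually supplies the missing uniform control.
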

\begin{proof}

 The exponential function is completely monotone for all $\theta <0$, and so  is the function $E_\alpha(- \, \cdot)$ (\citet{pol:48}). Furthermore the function $x \rightarrow \lambda x^\alpha$, $\alpha \in (0,1)$ is positive with a completely monotone derivative, so that its composition with $E_\alpha(- \, \cdot)$ is also completely monotone. Therefore $q(\cdot, u)$ is completely monotone, it being a product of completely monotone functions (\citet{sch+von:12}, Corollary 1.6), for all possible values of $u$. Observe also that  $q(0+, u)=1$, for all $u$.
  
To complete the proof we only need to show that   $\int_{\mathbb R^d}((ru)^2 \wedge 1)m_0(dr, du) < \infty$, since for positive values of $\gamma$ this is a particular cases of \citet{ros:07} analysis. From $\exp(\cdot), E_\alpha(- \, \cdot) \leq 1 $ we have  \begin{equation}
\int_{S^{d-1}}\int_0 ^ 1 r^2  \frac{
q(r,u;\alpha, \lambda, \theta )}{r}dr \sigma(du) < \sigma(S^{d-1})\int_0^1 r  dr< \infty  
  \end{equation}
Also, by assumption \textbf{(B)} it follows that if $\theta_*= \min_{S^{d-1}}\theta(u)=0$ then both $\lambda_*= \min_{S^{d-1}}\lambda(u),$ and $ \alpha_*= \min_{S^{d-1}}\alpha(u)$ are strictly positive. Then in view of \eqref{eq:MLasymptInf} it follows
 \begin{align}
\int_{S^{d-1}}\int_1 ^ \infty   \frac{
q(r,u;\alpha, \lambda, \theta )}{r}dr \sigma(du) & <  \sigma(S^{d-1})\int_1^\infty r^{-1}  E_{\alpha_*}(-\lambda_* r^{\alpha_*} ) dr \nonumber \\ &\sim \frac{\sigma(S^{d-1})}{\lambda_* \Gamma(1-\alpha_*)} \int_1^\infty r^{-1-\alpha_*} dr  < \infty.
  \end{align}   
   Similarly if $\lambda_*, \alpha_*=0$ then $\theta_*>0$ whence
 \begin{equation}
\int_{S^{d-1}}\int_1 ^ \infty   \frac{
q(r,u;\alpha, \lambda, \theta )}{r}dr \sigma(du) < \sigma(S^{d-1})\int_1^\infty r^{-1} e^{-r \theta_*} dr < \infty.
  \end{equation}    
\end{proof}

We can thus define a general $d$-dimensional GTGS variable as follows.

\begin{defn} 
Let $d \geq 1$, $\gamma \in [0,2)$, $\sigma(du)$ be a finite measure on $S^{d-1}$, and $\mu \in \mathbb R^d$. A generalized tempered geometric  $\gamma$-stable distribution GTGS$_{\gamma}(\sigma, \alpha, \lambda, \theta; \mu  )$  on $\mathbb R^d$ is the i.d. distribution determined by the L\'evy  triplet $(\mu, 0, m_\gamma(dr, du; \sigma, \alpha, \lambda, \theta))$. A tempered geometric stable distribution  TGS$(\sigma, \alpha, \lambda, \theta; \mu  )$ is a GTGS$_{0}(\sigma, \alpha, \lambda, \theta; \mu  )$ distribution.
\end{defn}
When $\sigma(du)=\sigma du$, $\sigma>0$ and $\lambda, \theta, \alpha$ are constant functions, the GTGS distribution is  rotationally-invariant (i.e. symmetric).

In this paper we will explore in detail the case $d=1$, which is easier being $S^{0}=\{-1, 1\}$ and a great deal can be said on the analytic structure of the distributions. In such case the spherical measure  reduces to
\begin{equation}\label{eq:sigma0}
\sigma(du)=\delta_+\delta_1 (du)+ \delta_- \delta_{-1}(du)
\end{equation} for $\delta_+, \delta_- \geq 0$ and $\delta_x$ is the Dirac delta measure concentrated in $x \in \R$. We have the following expression for the L\'evy density, with $\alpha(u)=\alpha_\pm$, $\theta(u)=\theta_\pm$, $\lambda(u)=\lambda_\pm$ 
\begin{equation}
m_\gamma(dr,du; \sigma, \alpha, \lambda, \theta)=\delta_+ \frac{e^{-\theta_+ r}}{r^{1+\gamma}} 	  E_{\alpha_+} ( -\lambda_+ \, r^{\alpha_+} ) dr \delta_1(du) +  \delta_- \frac{e^{-\theta_- r}}{r^{1+\gamma}} E_{\alpha_-}(- \lambda_- \,r^{\alpha_-} )dr \delta_{-1}(du) . 
\end{equation}
One can easily extend this definition by assuming the stability indices for the negative and positive parts are not necessarily equal i.e. if $\gamma_+ \neq, \gamma_- \in [0,2]$, using the vector notation \begin{align}
\balpha&=(\alpha_+, \alpha_-) \in (0,1] \times (0,1], \nonumber \\ \blambda&=(\lambda_+, \lambda_-) \in (0,\infty) \times (0, \infty), \nonumber \\ \btheta&=(\theta_+, \theta_-) \in [0,\infty) \times [0, \infty),  \\ \bdelta&=(\delta_+, \delta_-) \in [0,\infty) \times [0, \infty) \setminus \{(0,0)\}, \nonumber \\ \bgamma&=(\gamma_+, \gamma_-), \in [0,2) \times [0,2) \nonumber
\end{align} in Cartesian coordinates we have the expression for the L\'evy density
\begin{equation}\label{eq:onedimlev}
m_{\bgamma}(x; \balpha, \blambda, \btheta,  \bdelta )=\delta_+ \frac{e^{-\theta_+ x}}{x^{1+{\gamma_+}}} 	  E_{\alpha_+} ( -\lambda_+ \, x^{\alpha_+} ) \I_{\{x>0\}} +  \delta_- \frac{e^{-\theta_- |x|}}{|x|^{1+\gamma_-}} E_{\alpha_-}(- \lambda_- \,| x|^{\alpha_-} )\I_{\{ x<0\}}. 
\end{equation}
When the L\'evy measure is symmetric, or it has positive/negative support  we write respectively $m_{\gamma}(x; \alpha, \lambda, \theta,  \delta )$, $m^+_{\gamma}(x; \alpha, \lambda, \theta,  \delta )$, $m^-_{\gamma}(x; \alpha, \lambda, \theta,  \delta )$, where the constants denote the only surviving value for each parameter vector. In these two latter instances the corresponding GTGS laws are said to be spectrally positive (resp. negative). It is important to recall the spectral positive/negativity is not the same as the corresponding probability laws being positively/negatively supported, although this equivalence holds true when $\gamma \in [0,1)$ and $\mu= \int_{\{|x|<1\}}m^\pm_{\gamma}(x; \alpha, \lambda, \theta,  \delta )$ (see \citet{sat:99}, Theorem 24.10).  
Again $\int_{\mathbb R}(x^2 \wedge 1)m_\bgamma(x; \balpha, \blambda, \btheta,  \bdelta) dx < \infty$,  so  we can introduce the  main definition.
\begin{defn}
Let $\mu \in \mathbb R$ and $m_{\bgamma}(x; \balpha, \blambda, \btheta,  \bdelta )$ be given by \eqref{eq:onedimlev}.   A generalized tempered  geometric  $\bgamma$-stable distribution \GTGS on the real line  is an i.d. distribution whose L\'evy triplet is given by  $(\mu, 0, m_\bgamma(x; \balpha, \blambda, \btheta, \bdelta ) dx)$. A tempered geometric stable \TGS  distribution is a   {\upshape GTGS}$_{0}(\balpha, \blambda, \btheta, \bdelta; \mu  )$ distribution. Symmetric GTGS and TGS distributions are denoted by  \GTGSs and \TGSs; spectrally positive (resp. negative)  GTGS and TGS distributions are denoted by \GTGSp and \TGSp (resp. \GTGSm and \TGSm). A GTGS$_\bgamma(\balpha, \blambda, 0, \bdelta; \mu)$ distribution  is indicated by \GTGSo (resp. \GTGSos, \GTGSop, and \GTGSom).
\end{defn}

\begin{oss} Notice that  for GTGS distributions the ``stability parameter'' is declared to be the  exponent $\gamma$ in the \citet{ros:07} framework, whereas for TGS distributions coincides with the Mittag-Leffler parameter. As we will clarify further on, the reason is that TGS random variables are  extensions of positive geometric stable laws, and for the latter the stability parameter is that of the underlying stable distributions.
\end{oss}

\begin{oss}\label{oss:TGTSselfdec} It is clear from \eqref{eq:onedimlev} that distributions in the \GTGS class are self-decomposable with canonical density 
\begin{equation}\label{eq:GTGSk}
k_{\bgamma}(x; \balpha, \blambda, \btheta,  \bdelta )=\delta_+ \frac{e^{-\theta_+ x}}{x^{\gamma_+}} 	  E_{\alpha_+} ( -\lambda_+ \, x^{\alpha_+} ) \I_{\{x>0\}} +  \delta_- \frac{e^{-\theta_- |x|}}{|x|^{\gamma_-}} E_{\alpha_-}(- \lambda_- \,| x|^{\alpha_-} )\I_{\{ x<0\}}. 
\end{equation}
\end{oss}

\begin{oss}\label{special}
 With the numerical constants denoting the corresponding constant functions,  we have the following particular cases:
\begin{itemize}
\item[(i)] Let  $\alpha_n \rightarrow 0$ be any sequence of real numbers. Then $X_n \sim$  GTGS$_\gamma(\sigma, \alpha_n, \lambda, \theta, \mu  )$ is such that $X_n \rightarrow ^d X$ with $X \sim \mbox{TS}_\gamma(\exp(- r \theta), \sigma^{\lambda} )$, where $\sigma^\lambda(du)=\frac{\sigma(du)}{1 +\lambda(u)}$, provided that $|\lambda(u)|<1$ for all $u$;
\item[(ii)]  GTGS$_\gamma(\sigma, 1, \lambda, \theta,\mu  )=\mbox{TS}_\gamma(\exp(- r (\theta+\lambda)), \sigma, \mu  )$;
\item[(iii)]  GTGS$_\gamma(\sigma, 1, \lambda ,0, \mu )=\mbox{TS}_\gamma(\exp(- r \lambda), \sigma , \mu )$;
\item[(iv)]If $\lambda_n \rightarrow 0$ is a real-valued sequence, and $\gamma>0$, then  $X_n \sim$ GTGS$_\gamma(\sigma, \alpha, \lambda_n , \theta )$ is such that $X_n \rightarrow^d X$, with $X \sim \mbox{TS}_\gamma(\exp(- r \theta), \sigma, \mu  )$;
\item[(v)] If $\lambda_n \rightarrow 0$ is a real-valued sequence, and $\gamma>0$, then  $X_n \sim$ GTGS$_\gamma(\sigma, \alpha, \lambda_n , 0 )$ is such that $X_n \rightarrow^d X$, with $X \sim$ S$_\gamma(\sigma, \mu)$;
\item[(vi)]  TGS$_\alpha^+(\lambda, \theta, \delta; \mu_0)=$ TPL$\left(\alpha, \frac{1}{\lambda +\theta^{\alpha}}, \theta, \frac{  \delta}{\alpha}\right)$ where $\mu_0=\int_{\{x<1\}}m_0^+(x; \alpha, \lambda, \theta,  \delta)dx.$
\end{itemize}
Furthermore we have (up to a possible location shift) the various special classes within the GTGS class on $\mathbb R$:
\begin{itemize}
\item[(vii)] GTGS$_{\bgamma}((1,1), \blambda, \btheta,  (\delta, \delta); \mu)$  are distributions in the general \citet{kop:95} and KoBol, \citet{boy+lev:00}, classes;
\item[(viii)] GTGS$_{(\gamma,\gamma)}((1,1), \blambda, \btheta,  (\delta, \delta); \mu)$  are CGMY distributions, \citet{car+al:02};
\item[(ix)] GTGS$_{(0,0)}((1,1), \blambda, \btheta,  \bdelta;\mu)=$ BG$(\blambda+\btheta,\bdelta)$, \citet{kuc+tap:08};
\item[(x)] GTGS$_{(\gamma,\gamma)}((1,1), \blambda, (\theta, \theta),  \bdelta; \mu)$ are the i.d. innovations of truncated L\'evy flights, \citet{man+sta:95}.
\end{itemize}
\end{oss}
\begin{proof}
Using continuity of the Mittag-Leffler function, $E_1(-\lambda(u) x)=\exp(-\lambda(u) x)$, $x \in \R$, $E_\alpha(0)=1$ for all $\alpha \in (0,1]$ and $E_\alpha(-\lambda(u)) \rightarrow 1/(1+\lambda(u))$ as $\alpha \rightarrow 0$ (\citet{hau+al:11}) together with dominated convergence as needed, (i)-(vi) follow operating on the L\'evy measures. For (vi) write 
\begin{equation}
m^+(x; \alpha, \lambda, \theta,  \delta)
= \delta \frac{e^{-\theta x}}{x} 	  E_{\alpha} ( -\lambda \, x^{\alpha} )= \alpha c_1 \frac{e^{-\theta x}}{x} 	  E_{\alpha} \left(\frac{1-c_2\theta^{\alpha}}{c_2} \, x^{\alpha} \right)  
 \end{equation}
with $c_1=\delta /\alpha$, $c_2=1/(\lambda +\theta^{\alpha})$, which matches the expression of the TPL L\'evy measure in Proposition 2.1 of \citet{tor+al:21}, which is given there for the Laplace exponent. Since $\phi_X(-iz)=\psi_X(z)+ iz \mu_0$ the result follow. Finally (vii)-(x) are simple parameter specifications of the prior cases.
\end{proof}

Remark \ref{special} clarifies that GTGS distributions specialize to both CTS and TPL distributions. As we shall argue in the following, the intermediate cases between these boundary distributions can be considered as ``tempered'' versions of geometric stable laws.


\section{Characteristic exponents}

We begin a theory of one-dimensional GTGS distributions by the determination of their characteristic exponents. We have a  divide depending on whether $\theta_+, \theta_->0$ or $\theta_+=\theta_-=0$. The first case can be computed directly. The second corresponds to a tempering function which is of pure Mittag-Leffler type and requires using  a limiting argument on the analytic continuation of the positive $\theta$ case.  The mixed cases $\theta_+>0, \theta_-=0$, $\theta_+=0, \theta_->0$ are of course possible and can be obtained by combining negative and positive parts as necessary, and thus will not be considered. Cumulants are also discussed at the end of the section.

\begin{thm}\label{thm:CF} Let $X$ be a \GTGS r.v. with $\theta_+, \theta_->0$ and $\alpha_+, \alpha_- \in (0,1)$, let $m$ be the L\'evy density of $X$ and set $\mu_0=\int_{\{|x|<1\}}x m(x)dx$
and $\mu_1=\int_{\{|x|>1\}}x m(x)dx$. Then

\begin{itemize}
\item[(i)] if $\gamma_-=\gamma_+=0$ then
 \begin{align}\label{eq:CFTGSg0}
\psi_X(z)=\frac{\delta_+}{\alpha_+} \log \left(\frac{\theta_+^{\alpha_+} +\lambda_+}{(\theta_+-i z)^{\alpha_+}+\lambda_+} \right)+ \frac{\delta_-}{\alpha_-} \log \left(\frac{\theta_-^{\alpha_-} +\lambda_-}{(\theta_+ + i z)^{\alpha_-}+\lambda_-} \right) + i z ( \mu- \mu_0);
\end{align}
\item[(ii)]  
if $\gamma_+, \gamma_- \in (0,2)$, $\gamma_\pm \neq 1$, then \begin{align}\label{eq:CFTGSgl2}\psi_X(z)=&\Gamma(-\gamma_+) \delta_+ \left( (\theta_+ - i z )^{\gamma_+} \, _2R_1\left(1, -\gamma_+, 1, \alpha_+;\frac{-\lambda_+}{ (\theta_+-i z)^{\alpha_+}}\right)  \right. \nonumber  \\ & \left. + i z \, \gamma_+ \theta_+^{\gamma_+-1} \,  _2R_1\left(1, 1-\gamma_+, 1, \alpha_+;-\frac{\lambda_+}{ \theta_+^{\alpha_+}} \right) -\theta_+^{\gamma_+} \,  _2R_1\left(1, -\gamma_+, 1, \alpha_+;-\frac{\lambda_+}{ \theta_+^{\alpha_+}} \right) \right)+ \nonumber \\ &\Gamma(-\gamma_-) \delta_- \left( (\theta_- + iz )^{\gamma_-} \, _2R_1\left(1, -\gamma_-, 1, \alpha_-;\frac{-\lambda_-}{ (\theta_- + i z)^{\alpha_-}}\right)  \right. \nonumber  \\ & \left. - i z \gamma_- \, \theta_-^{\gamma_--1} \,  _2R_1\left(1, 1-\gamma_-, 1, \alpha_-;-\frac{\lambda_-}{ \theta_-^{\alpha_-}} \right) -\theta_-^{\gamma_-} \,  _2R_1\left(1, -\gamma_-, 1, \alpha_-;-\frac{\lambda_-}{ \theta_-^{\alpha_-}} \right) \right) \nonumber \\ &\phantom{xxxxxxxxxxxxxxxxxxxxxxxxxxxxxxxxxxxxxxxx}+  i z (\mu_1 +\mu );
   \end{align}
\item[(iii)] 
if $\gamma_+=\gamma_-=1$ then \begin{align}\label{eq:CFTGS1}
\psi_X(z)=& \delta_+ \frac{\lambda_+}{\alpha_+} \left(   \frac{\Phi\left(-\frac{\lambda_+}{\theta_+^{\alpha_+}}, 1, \frac{\alpha_+-1}{\alpha_+}\right) }{\theta^{\alpha_+-1}}-  \frac{\Phi\left(-\frac{\lambda_+}{(\theta_+-iz)^{\alpha_+}}, 1, \frac{\alpha_+-1}{\alpha_+}\right) }{(\theta_+-iz)^{\alpha_+-1}}  \right) \nonumber \\ & \phantom{xxxxxxxxxxxxxxxxx} + \delta_+\frac{\theta_+-i z}{\alpha_+} \log \left(\frac{(\theta_+-i z)^{\alpha_+}+ \lambda_+}{\theta_+^{\alpha_+}+ \lambda_+ }\right) + \nonumber \\ & \delta_- \frac{\lambda_-}{\alpha_-} \left(   \frac{\Phi\left(-\frac{\lambda_-}{\theta_-^{\alpha_-}}, 1, \frac{\alpha_--1}{\alpha_-}\right) }{\theta^{\alpha_--1}} -\frac{\Phi\left(-\frac{\lambda_-}{(\theta_-+iz)^{\alpha_-}}, 1, \frac{\alpha_--1}{\alpha_-}\right) }{(\theta_-+iz)^{\alpha_--1}}   \right) \nonumber \\ & \phantom{xxxxxxxxx} + \delta_- \frac{\theta_- + i z}{\alpha_-} \log \left(\frac{(\theta_- + i z)^{\alpha_-}+ \lambda_-}{\theta_-^{\alpha_-}+ \lambda_- }\right)+ i z(\delta_+-\delta_-+\mu_1+\mu). 
\end{align}
\end{itemize}
The remaining cases can be derived from the given expressions by combining positive and negative parts as needed. 
Furthermore, all the above characteristic functions can be analytically continued  on   $S= \{z \in \mathbb C:  \mbox{\upshape{Im}}(z) \in (-\theta_-,  \theta_+) \}$.
\end{thm}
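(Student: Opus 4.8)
The plan is to compute the characteristic exponent $\psi_X$ directly from the Lévy--Khintchine representation \eqref{eq:LK} applied to the Lévy density \eqref{eq:onedimlev}, treating the positive and negative parts separately since they decouple additively. For the positive part with $\theta_+>0$, the key observation is that the Mittag--Leffler factor admits the series expansion $E_{\alpha_+}(-\lambda_+ x^{\alpha_+})=\sum_{k\ge 0}(-\lambda_+)^k x^{\alpha_+ k}/\Gamma(\alpha_+ k+1)$, so that integrals of the form $\int_0^\infty (e^{izx}-1-izx\I_{\{x<1\}})\,e^{-\theta_+ x}x^{\alpha_+ k-1-\gamma_+}\,dx$ must be evaluated term by term. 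After absorbing the truncation-function corrections into the linear drift terms $\mu_0,\mu_1$, each such integral reduces (using the standard gamma integral $\int_0^\infty e^{-sx}x^{\beta-1}dx=\Gamma(\beta)s^{-\beta}$ together with the identity $\int_0^\infty(e^{-sx}-1)x^{\beta-1}dx=\Gamma(\beta)(s^{-\beta}-\ ?)$, or rather the analytic-continuation version $\Gamma(-\beta)((\theta-iz)^\beta-\theta^\beta)$ for $\beta\in(0,1)$, and the $\Phi$-type primitive when $\beta=1$) to gamma functions in $\alpha_+ k-\gamma_+$. Resumming the resulting series in $k$ and recognizing the combination $\sum_k (1)_k \Gamma(-\gamma_++\alpha_+ k)/\Gamma(1+\alpha_+ k)\cdot w^k/k!$ as precisely Dotsenko's $_2R_1(1,-\gamma_+,1,\alpha_+;w)$ per \eqref{eq:DotHG} yields \eqref{eq:CFTGSgl2}; the case $\gamma_\pm=1$ requires instead the primitive of $(e^{-sx}-1)/x$, which produces a logarithm plus a Lerch transcendent $\Phi$, giving \eqref{eq:CFTGS1}; and the case $\gamma_\pm=0$ is even simpler, the resummation collapsing to a single logarithm via $\sum_k(-\lambda_+)^k w^{\alpha_+ k}/(\alpha_+ k)$-type series, giving \eqref{eq:CFTGSg0} (alternatively one invokes Remark \ref{special}(vi) and the known TPL exponent).

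\textbf{The statement I am asked to prove} is the final sentence: that each of \eqref{eq:CFTGSg0}, \eqref{eq:CFTGSgl2}, \eqref{eq:CFTGS1} extends analytically to the strip $S=\{z\in\mathbb C:\operatorname{Im}(z)\in(-\theta_-,\theta_+)\}$. The strategy here is a domain-of-holomorphy argument for the ingredient functions. For $z$ in the open strip $S$, one has $\operatorname{Re}(\theta_+-iz)=\theta_++\operatorname{Im}(z)>0$ and likewise $\operatorname{Re}(\theta_-+iz)=\theta_--\operatorname{Im}(z)>0$, so the principal powers $(\theta_+-iz)^{\alpha_+},(\theta_+-iz)^{\gamma_+}$, etc., are holomorphic on $S$ (the arguments stay in the right half-plane, away from the branch cut of $w\mapsto w^\beta$ along $(-\infty,0]$). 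Next I must check that the arguments fed into $_2R_1$, namely $-\lambda_+/(\theta_+-iz)^{\alpha_+}$, avoid the branch cut $[1,\infty)$ of the function $z\mapsto{}_2R_1(a,b,c,\tau;z)$ recalled after \eqref{eq:DotHG}: since $\lambda_+>0$ and the $\alpha_+$-th power of a right-half-plane point has argument in $(-\pi\alpha_+/2,\pi\alpha_+/2)\subset(-\pi/2,\pi/2)$, the quotient $-\lambda_+/(\theta_+-iz)^{\alpha_+}$ has argument in $(\pi-\pi/2,\pi+\pi/2)$, hence never lands on the positive real axis $[1,\infty)$, so composition with $_2R_1$ is holomorphic on $S$. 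The analogous check for $\Phi(-\lambda_\pm/(\cdots)^{\alpha_\pm},1,(\alpha_\pm-1)/\alpha_\pm)$ uses that $|{-\lambda_+/(\theta_+-iz)^{\alpha_+}}|$ may exceed $1$, so one genuinely needs the analytic continuation of $\Phi$ off the unit disc cited from \citet{fer+al:17}, whose singular locus is again $[1,\infty)$ in the first argument — avoided for the same argument-of-a-power reason. Finally, since $\psi_X$ is by construction holomorphic in a neighbourhood of $\mathbb R$ (the Lévy measure has exponential moments up to order $\theta_\pm$, so the moment-generating function converges there) and agrees with the closed-form expressions on $\mathbb R$, the identity theorem forces the extension to coincide with $\psi_X$ throughout the connected set $S$.

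\textbf{The main obstacle} I anticipate is not the strip argument itself but justifying the term-by-term integration and resummation in the derivation of the closed forms: one must verify that the interchange of $\sum_k$ and $\int_0^\infty$ is legitimate (Fubini--Tonelli fails naively because the integrand is not of one sign and the truncation function spoils absolute integrability near $0$ term by term), and that the formally resummed hypergeometric series converges on the claimed domain. I would handle this by first establishing the identity for purely imaginary $z=i\eta$ with $|\eta|$ small, where everything is real and positive and absolute convergence is manifest (here $|{-\lambda_+/(\theta_++\eta)^{\alpha_+}}|<1$ once $\eta$ is close enough to make the base large, or one restricts to that sub-regime), then extending by analytic continuation — which is exactly why the strip statement is proved in the same breath. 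A secondary subtlety is the appearance of $\Gamma(-\gamma_+)$, which has poles at $\gamma_+\in\{1,2,\dots\}$; this is precisely why the cases $\gamma_\pm=1$ are split off and handled via the Lerch transcendent, the $_2R_1$ formula and the $\Phi$ formula matching up in the confluent limit $\gamma_\pm\to 1$ after the pole of $\Gamma(-\gamma_+)$ cancels against a zero of the bracketed difference.
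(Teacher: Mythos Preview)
Your proposal is correct and follows essentially the same route as the paper: decouple into spectrally positive/negative parts, expand in series, integrate term by term against $e^{-\theta x}x^{\beta-1}$, recognise the resummed series as ${}_2R_1$ (or a logarithm when $\gamma=0$, or a Lerch transcendent when $\gamma=1$), and then invoke analytic continuation onto the strip by checking that the arguments fed into the special functions avoid their branch cuts. Your branch-cut analysis for $-\lambda_+/(\theta_+-iz)^{\alpha_+}$ is in fact more explicit than the paper's, which simply asserts that these values ``never attain real positive values'' on $S^\pm$.

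There is one minor procedural difference worth flagging. You propose to expand only the Mittag--Leffler factor and then evaluate, for each $k$, the CTS-type integral $\int_0^\infty (e^{izx}-1-izx)\,e^{-\theta_+ x}x^{\alpha_+ k-1-\gamma_+}\,dx$ in closed form. The paper instead expands \emph{both} $e^{izx}-1(-izx)$ and $E_{\alpha_+}$ as power series, so that every inner integral is a bare gamma integral $\int_0^\infty e^{-\theta_+ x}x^{\alpha_+ j+k-\gamma_+-1}dx$, and then resums the resulting double series first in $k$ (binomial series) and then in $j$ (yielding ${}_2R_1$ or the logarithm). Both orderings land on the same answer; the paper's version has the advantage that the Fubini justification is transparent once one restricts to $|z|<\theta_+$ and $\lambda_+<\theta_+^{\alpha_+}$ (all terms positive when $z$ is purely imaginary), which is exactly the restricted regime you also identify as the right starting point before continuing analytically. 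The paper likewise lifts the constraint $\lambda<\theta^\alpha$ only at the very end, via the same continuation argument you outline.
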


\begin{proof}
We only treat the positive part, the negative one being identical with the obvious parameter modification, and by substituting $x$ with $|x|$. We thus remove the subscripts 
for ease of notation.

Assume $\gamma=0$. We notice that in that case $x \, m(x)$ is integrable around zero, and we can compute the L\'evy-Khintchine integral without truncation. Interchanging series and integral using  Fubini's Theorem, we have 
\begin{align}\label{eq:CEgmain}
 \int_0^\infty & \left(e^{i z  x}-1   \right) q(x)x^{-1-\gamma} dx= \int_0^\infty \sum_{k=1}^\infty \frac{(i z x)^k}{k !}  \frac{e^{-\theta x}}{x} E_{\alpha} \left( -\lambda \, x^{\alpha} \right) dx \nonumber \\ &=  \sum_{k=1}^\infty \frac{(i z)^k}{k !}\int_0^\infty  e^{-\theta x} x^{k-1} E_{\alpha} \left( -\lambda \, x^{\alpha} \right) dx 
\nonumber \\ &=
    \sum_{k=1}^\infty \frac{(i z)^k}{k !}\sum_{j=0}^\infty \left(-\lambda \right)^j \frac{1}{\Gamma(1+ \alpha j)}\int_0^\infty  e^{-\theta x} x^{\alpha j +	k-1} dx \nonumber \\ &=   \sum_{k=1}^\infty \frac{\left(\frac{i z}{\theta}\right)^k}{k !}\sum_{j=0}^\infty \left(\frac{-\lambda}{\theta^\alpha} \right)^j \frac{\Gamma( k+ \alpha j )}{\Gamma(1+ \alpha j)}  \nonumber \\ &=     \sum_{j=0}^\infty \left(\frac{-\lambda}{\theta^\alpha} \right)^j \frac{1}{\Gamma(1+ \alpha j)} \sum_{k=1}\frac{\left(\frac{i z}{\theta}\right)^k}{k !} \Gamma( k+ \alpha j ) . 
\end{align}
For $j=0$ the summations on $k$ in \eqref{eq:CEgmain}  reduce to a logarithmic series. 
 For $j \geq1$, recalling the  binomial series it holds
\begin{align}\label{eq:bin}\sum_{k=1}\frac{\left(\frac{i z}{\theta}\right)^k}{k !}\frac{ \Gamma( k+ \alpha j )}{\Gamma(\alpha j)}
=\left(\frac{\theta}{\theta-i z}\right)^{\alpha j}-1.
\end{align}
Substitute \eqref{eq:bin} in \eqref{eq:CEgmain}, and  under the convergence conditions $|z| < \theta, \theta^\alpha<\lambda$, we obtain
\begin{align}\label{eq:CEg0}
 & \sum_{j=1}^\infty \left(\frac{-\lambda}{ (\theta-iz)^\alpha} \right)^j \frac{1}{\alpha j} -\sum_{j=1}^\infty \left(\frac{-\lambda}{\theta^\alpha} \right)^j \frac{1}{\alpha j}+ \sum_{k=1}\frac{(i z/\theta)^k}{k}   \nonumber \\&=-\frac{1}{\alpha} \left( \log \left(  \frac{(\theta-i z)^\alpha+\lambda}{(\theta- i z)^\alpha}\right) - \log\left( \frac{\theta^\alpha+\lambda}{\theta^\alpha} \right) + \log\left(\frac{(\theta - i z)^\alpha}{\theta^\alpha} \right) \right) \nonumber \\ &= \frac{1}{\alpha} \log \left(\frac{\theta^\alpha +\lambda}{(\theta-i z)^\alpha+\lambda} \right).
\end{align}
 Here we consider the principal branch of the complex logarithm and the power function $y \mapsto y^\alpha$ for Arg$(y) \in (-\pi, \pi]$.

\medskip

 For $\gamma \in (0,2)$, we observe  that since  $m$ decays exponentially, it holds $\int_{ \{|x| >1\}} x m(x)dx=\mu_1 < \infty$.  We can thus use the representation of the characteristic exponent with constant truncation function 1.
 The same calculations of \eqref{eq:CEgmain} produce
\begin{align}\label{eq:CEgmain2}
 \int_0^\infty & \left(e^{i z  x}-1  - i z x \right) q(x)x^{-1-\gamma} dx=    \theta^\gamma  \sum_{j=0}^\infty \left(\frac{-\lambda}{\theta^\alpha} \right)^j \frac{1}{\Gamma(1+ \alpha j)} \sum_{k=2}\frac{\left(\frac{i z}{\theta}\right)^k}{k !} \Gamma( k-\gamma+ \alpha j ).
\end{align}
Now first let  $\gamma \neq 1$. Using the binomial series again we have
\begin{equation}\label{powerL2}
 \sum_{k=2}^\infty \Gamma(k+ j \alpha- \gamma) \frac{(i z/\theta)^k}{k!}  = \Gamma(j \alpha-\gamma)\left( \left(\frac{\theta - iz}{\theta}\right)^{-\alpha j +\gamma} -  i z \frac{j \alpha -\gamma}{\theta}  	- 1\right) 
\end{equation}
and therefore, whenever $|z|<\theta$ and $\theta^\alpha<\lambda$, \eqref{eq:CEgmain2}   
becomes, in view of the series expression \eqref{eq:Dots} for the $_2R_1$   function 
\begin{align}\label{eq:CEgl2}
&   \sum_{j=0} ^\infty   \frac{\Gamma(j \alpha-\gamma) }{\Gamma(j \alpha +1) } \left((\theta-iz)^{\gamma} \left( \frac{-\lambda }{(\theta- iz)^\alpha} \right)^j - \theta^{\gamma} \left( \frac{-\lambda }{\theta^{\alpha}} \right)^j  \left( iz\frac{j \alpha-\gamma}{\theta}   + 1 \right) \right) \nonumber \\&=   
(\theta-iz)^{\gamma}\sum_{j=0} ^\infty   \frac{\Gamma(j \alpha-\gamma) }{\Gamma(j \alpha +1) } \left( \frac{-\lambda }{(\theta- i z)^\alpha} \right)^j   - \theta^{\gamma}\sum_{j=0} ^\infty   \frac{\Gamma(j \alpha-\gamma) }{\Gamma(j \alpha +1) } \left( \frac{-\lambda }{\theta^\alpha} \right)^j 
\nonumber \\  &\phantom{xxxxxxxxxxxxxxxxxxxxxxxxxxxxxxxx}  -  i z \theta^{\gamma-1}\sum_{j=0} ^\infty   \frac{\Gamma(j \alpha-\gamma+1) }{\Gamma(j \alpha +1 ) } \left( \frac{-\lambda }{\theta^\alpha} \right)^j 
\nonumber \\ & =  \Gamma(-\gamma)\left(  (\theta - iz )^\gamma \, _2R_1\left(1, -\gamma, 1, \alpha;\frac{-\lambda}{ (\theta-i z)^\alpha}\right)  + i z \gamma \, \theta^{\gamma-1} \,  _2R_1\left(1, 1-\gamma, 1, \alpha;-\frac{\lambda}{ \theta^\alpha} \right) \nonumber \right. \\ & \phantom{xxxxxxxxxxxxxxxxxxxxxxxxxxxxxxxx}  \left. -\theta^\gamma \,  _2R_1\left(1, -\gamma, 1, \alpha;-\frac{\lambda}{ \theta^\alpha} \right) \right) .
\end{align}

Finally, for $\gamma=1$ the term $j=0$ in
\eqref{eq:CEgmain2} becomes yet another log-series
\begin{align}\label{powerFlog}
& \sum_{k=2}^\infty \Gamma(k-1) \frac{(i z/\theta)^k}{k!}  = \sum_{k=2}^\infty \frac{(i z/\theta)^k}{k(k-1)} = \sum_{k=2}^\infty \frac{(i z/\theta)^k}{k-1} -\sum_{k=2}^\infty \frac{(i z/\theta)^k}{k} \nonumber 
 \\ &  = \frac{iz}{\theta}+\log\left(1-\frac{iz}{\theta} \right)\left(1- \frac{iz}{\theta} \right).
\end{align}
Separating this term and using   \eqref{powerL2} results in the following expression for \eqref{eq:CEgmain2}
\begin{align}\label{eq:ceplusgamma1} 
&\frac{\theta-iz}{\alpha}\sum_{j=1} ^\infty    \frac{1 }{ j(\alpha j-1) } \left( \frac{-\lambda }{(\theta- iz)^\alpha} \right)^j   - \frac{\theta}{\alpha}\sum_{j=1} ^\infty   \frac{1 }{ j(\alpha j-1) } \left( \frac{-\lambda }{\theta^\alpha} \right)^j 
\nonumber \\  &  -  \frac{iz}{\alpha}  \sum_{j=1} ^\infty  \frac{1}{j}  \left( \frac{-\lambda }{\theta^\alpha} \right)^j +i z +\log\left(1-\frac{iz}{\theta} \right)
\left(\theta- i z \right). \end{align}
Observe that  for $y \in \mathbb  C$, $|y|<1$ it holds that
\begin{align}\label{eq:lerche}
&\sum_{j=1} ^\infty    \frac{y^j }{ j(\alpha j-1) }=\alpha \sum_{j=1} ^\infty    \frac{y^j }{(\alpha j-1) }-\sum_{j=1} ^\infty    \frac{y^j }{j } =  \sum_{j=0} ^\infty    \frac{y^{j+1} }{( j+\frac{\alpha-1}{\alpha}) } + \log(1-y) \nonumber \\ =& y \, \Phi \left(y,1,\frac{\alpha-1}{\alpha}\right)+ \log(1-y). \end{align}
Replacing \eqref{eq:lerche} in \eqref{eq:ceplusgamma1}, and recalling  \eqref{eq:Lerch}, the case $\gamma=1$ specializes to
\begin{align}\label{eq:CEg1} 
  \int_0^\infty & \left(e^{i z  x}-1  - i z x \right) q(x)x^{-\gamma-1} dx= \nonumber \\  &\frac{\lambda}{\alpha} \left(     \frac{\Phi\left(-\frac{\lambda}{\theta^{\alpha}}, 1, \frac{\alpha-1}{\alpha}\right) }{\theta^{\alpha-1}}- \frac{\Phi\left(-\frac{\lambda}{(\theta-iz )^{\alpha}}, 1, \frac{\alpha-1}{\alpha}\right) }{(\theta-iz)^{\alpha-1}} \right) + \frac{\theta-i z}{\alpha} \log\left(\frac{(\theta-i z)^\alpha+ \lambda}{(\theta-i z)^\alpha}\right) -\nonumber \\& \frac{\theta}{\alpha}\log\left(\frac{\theta^\alpha+ \lambda}{\theta^\alpha}\right)  + iz+ \frac{\theta-i z}{\alpha}\log\left(\frac{(\theta -iz)^\alpha}{\theta^\alpha}\right)
 + \frac{i z}{\alpha}\log \left(\frac{\theta^\alpha+\lambda}{\theta^\alpha}\right) \nonumber \\ =&
\frac{\lambda}{\alpha} \left(    \frac{\Phi\left(-\frac{\lambda}{\theta^{\alpha}}, 1, \frac{\alpha-1}{\alpha}\right) }{\theta^{\alpha-1}} -  \frac{\Phi\left(-\frac{\lambda}{(\theta-iz)^{\alpha}}, 1, \frac{\alpha-1}{\alpha}\right) }{(\theta-iz)^{\alpha-1}} \right) + \frac{\theta-i z}{\alpha} \log\left(\frac{(\theta-i z)^\alpha+ \lambda}{\theta^\alpha+ \lambda }\right)+i z .  \end{align}

 Integrating the obtained expressions \eqref{eq:CEg0}, \eqref{eq:CEgl2} and \eqref{eq:CEg1} in $\sigma(du)$ as in \eqref{eq:onedimlev}, subtracting $i z (\mu_0-\mu)$  to  \eqref{eq:CEg0} and adding $iz (\mu_1+\mu)$  to \eqref{eq:CEg1}-\eqref{eq:CEgl2},   $(i)-(iii)$ follow for all $|z|<\theta$ and $\lambda < \theta^{\alpha}$.

\smallskip

 We must finally prove the analyticity on $S$ and that the constraint $\lambda <\theta^\alpha$ can be lifted.
As well-known by \citet{luk+sza:52}, characteristic functions can be analytically continued on horizontal strips $L=\{z \in \mathbb C:  -a< \mbox{Im}(z) < b \}$, $a,b \in \mathbb R \cup \{\infty\} $ in the complex plane.

Let us first treat the case $\gamma=0$.  The function  $y \mapsto y^\alpha$ is analytic outside the logarithm branch cut $\mathbb C \setminus (-\infty, 0]$ yielding the condition $(\theta_+-iz) \notin (-\infty, 0]$, i.e. Im$(z) \neq \theta_+$. Hence, $S^+:=\{z \in \mathbb C: \mbox{Im}(z)<\theta_+  \}$. Furthermore, the logarithm branch cut is never crossed by the function $\frac{\theta_+^{\alpha_+}+\lambda_+}{(\theta_+-iz)^\alpha+\lambda_+}$, so that  $\log\left(\frac{\theta_+^{\alpha_+}+\lambda_+}{(\theta_+-iz)^{\alpha_+}+\lambda_+}\right)$ is analytic on $S^+$. Since a characteristic function must be defined in a neighborhood of the origin, and so is expression $(i)$ by the previous part, by the principle of identity of analytic functions we must have  $b=\theta_+$. A similar reasoning applies to $(\theta_-+iz)$, which is analytical on $S^-:=\{z \in \mathbb C: \mbox{Im}(z)>-\theta_-  \}$, so that $a=\theta_-$, and $S=S^+ \cup S^-$.

For $\gamma \in (0,2)$, $\gamma \neq 1$ , we exploit the fact that it is known (e.g. \citet{kar+pri:19}) that $_2 R_1$ can be analytically continued on $\mathbb C \setminus [1, \infty)$.  On the other hand, on the regions $S^{\pm}$ the functions $-\lambda_\pm/(\theta_\pm \pm iz)^{\alpha_\pm}$ never attain real positive values, and together with the arguments above this shows that $\Psi_X(z)$ is analytic on $S$.

Lerch's transcendent can also be analytically continued on $\mathbb C \setminus [1,\infty)$, e.g. \citet{gui+son:08}, Lemma 2.2, and then the claim  follows also for the case $\gamma=1$. That the condition $\lambda < \theta^\alpha$ can be lifted also follows, since the whole parameter ranges of $\lambda, \theta, \alpha$ always lie in the domain of analyticity of the continued functions.

\end{proof}

\begin{oss}\label{oss:altchar} For $\gamma_+,\gamma_- \in (0,1)$ another expression for the characteristic exponent is available. For such parameter range we still have that as in the $\gamma=0$ case,  $\int_0^\infty  \left(e^{iz x}-1   \right) q(x)x^{-1-\gamma} dx< \infty$ and then the summation in  \eqref{eq:CEgmain2} would yield the expression
\begin{align}\label{eq:CFTGSgl2bis}\psi_X(z)=&\Gamma(-\gamma_+) \delta_+ \left( (\theta_+ - i z )^{\gamma_+} \, _2R_1\left(1, -\gamma_+, 1, \alpha_+;\frac{-\lambda_+}{ (\theta_+-i z)^{\alpha_+}}\right)  \right. \nonumber  \\ & \left.  -\theta_+^{\gamma_+} \,  _2R_1\left(1, -\gamma_+, 1, \alpha_+;-\frac{\lambda_+}{ \theta_+^{\alpha_+}} \right) \right)+ \nonumber \\ &\Gamma(-\gamma_-) \delta_- \left( (\theta_- + iz )^{\gamma_-} \, _2R_1\left(1, -\gamma_-, 1, \alpha_-;\frac{-\lambda_-}{ (\theta_- + i z)^{\alpha_-}}\right)  \right. \nonumber  \\ & \left.  -\theta_-^{\gamma_-} \,  _2R_1\left(1, -\gamma_-, 1, \alpha_-;-\frac{\lambda_-}{ \theta_-^{\alpha_-}} \right) \right) +  i z (\mu-\mu_0 ).
   \end{align}
Assume $X \sim$ GTSG$_\bgamma(\balpha, \blambda, \btheta, \bdelta; \mu_0)$. Equating  \eqref{eq:CFTGSgl2bis} and \eqref{eq:CFTGSgl2}, computing the expression in $z=-i$ and noticing that by \citet{sat:99}, Chapter 25, in this case it holds $E[X]=\mu_0 +\mu_1$ one deduces 
\begin{equation}
E[X]=\frac{\delta_+\Gamma(1-\gamma_+) }{\theta_+^{  1 -\gamma_+}} \, _2R_1\left(1, 1-\gamma_+  , 1, \alpha_+, - \frac{ \lambda_+ }{ \theta_+^{\alpha_+}}  \right)- \frac{\delta_-\Gamma(1-\gamma_-) }{\theta_-^{  1 -\gamma_-}} \,  _2R_1 \left(1, 1-\gamma_-  , 1, \alpha_- ; -\frac{ \lambda_- }{ \theta_-^{\alpha_-}} \right)
\end{equation}
 whenever $\theta_+, \theta_->0$.  We will study cumulants more in detail in Subsection \ref{sec:cum}. 
\end{oss}

\bigskip

\begin{esmp}\label{ex:lintemp} \emph{The} TPL \emph{distribution.}  Consider a  TGS$^+_{\alpha}(\lambda, \theta, \delta; \mu_0  )$
 r.v. $X$. From \eqref{eq:CFTGSg0}  we have the characteristic function 
\begin{equation}
\psi_X(z)= \frac{\delta}{\alpha} \log \left(\frac{\theta^\alpha +\lambda}{(\theta-i z)^\alpha+\lambda} \right)
\end{equation}
 for some positive constants $\lambda, \alpha, \theta$. Letting $c_1=\delta/\alpha$, $c_2=1/(\theta^\alpha+\lambda)$ we can write 
\begin{equation}
\psi_X(z)= -c_1 \log \left( 1+ c_2((\theta-i z)^\alpha- \theta^\alpha) \right)
\end{equation}
which is the characteristic exponent of a TPL$(\alpha, c_2, \theta, c_1)$ r.v.  in the parametrization of \citet{tor+al:21}, confirming Remark \ref{special}, (vi).
\end{esmp}

As a consequence of the analyticity of the characteristic functions above we know by the general theory of \citet{luk+sza:52}, that under the assumptions above a moment generating function for a TGS can be defined, all the moments exist and so do the $\theta$-exponential moments for $\theta \in (-\theta_-,\theta_+ )$.


\begin{oss}\label{rem:scalesum}\emph{ Scaling and independent sums.} Let $c>0$.  If $X$ is as in Theorem \ref{thm:CF},  by inspection on all cases we observe 
\begin{equation}c X \sim   
\mbox{GTGS}_{\bgamma}(\balpha,  (\lambda_+ c^{- \alpha_+}, \lambda_- c^{-\alpha_- }), \btheta c^{-1},( \delta_+ c^{\gamma_+}, \delta_- c^{\gamma_-}); \mu c^{-1}).
\end{equation}
Furthermore if  $X \sim $ \GTGS and $X' \sim $ GTGS$_{\bgamma}(\balpha, \blambda, \btheta,  \bdelta'; \mu')$ are independent and $\btheta>0$, then $X+X' \sim $ GTGS$_{\bgamma}(\balpha, \blambda, \btheta,  \bdelta+ \bdelta';\mu+\mu')$ as it follows directly from the i.d. property. 
\end{oss}

\bigskip

\subsection{Pure Mittag-Leffler tempering}

When the tempering function is a pure Mittag-Leffler function, (that is, $\btheta=0$), we have the \GTGSo class. Distributions in this class are structurally more similar to geometric stable laws, in that their L\'evy measure takes the form of the ratio of a Mittag-Leffler function over a power function. 

Before stating the results we show the following technical lemma on some analytical properties of the $_2R_1$ functions of interest.

\begin{lem}\label{lem:limits}
For   $b, c>0$, $a \leq  b < 1$, and $z \in \mathbb C,$  {\upshape Arg}$(z) \neq \pi/b$, it holds
\begin{enumerate} 
\item[(i)] \begin{equation}\label{eq:lemlimits}
\lim_{z \rightarrow 0 } \frac{ \Gamma(a)}{z^a} \, _2 R_1\left(1,a,1,b; \frac{-c}{z^{b}}\right)=c^{-a/b}\frac{\pi}{b \Gamma(1-a) \sin\left(   \frac{\pi a}{b} \right)},
\end{equation}
\item[(ii)] \begin{equation} \label{eq:lemdiff} \frac{d^n}{d z^n} \frac{\Gamma(a)}{z^a} \, _2 R_1\left(1,a,1,b; \frac{-c}{z^{b}}\right)= (-1)^n \frac{\Gamma(a+n)}{z^{a+n}} \, _2R_1\left(1,n+a,1,b; \frac{-c}{z^{b}}\right).
\end{equation}
\end{enumerate}
\end{lem}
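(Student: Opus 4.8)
The plan is to reduce everything to the power series
\[
\frac{\Gamma(a)}{z^a}\,{}_2R_1\!\left(1,a,1,b;\tfrac{-c}{z^{b}}\right)=\sum_{k=0}^{\infty}\frac{\Gamma(a+bk)}{\Gamma(1+bk)}\,(-c)^{k}\,z^{-a-bk},
\]
which is just \eqref{eq:DotHG} with $(1)_k=k!$, convergent for $|z|^{b}>c$. Part (ii) is then immediate. On $\{|z|^{b}>c\}$ the series converges locally uniformly, hence may be differentiated term by term; since $\frac{d^{n}}{dz^{n}}z^{-a-bk}=(-1)^{n}\frac{\Gamma(a+n+bk)}{\Gamma(a+bk)}z^{-a-n-bk}$, the $n$-th derivative is $(-1)^{n}\sum_{k}\frac{\Gamma(a+n+bk)}{\Gamma(1+bk)}(-c)^{k}z^{-a-n-bk}$, which is precisely $(-1)^{n}\frac{\Gamma(a+n)}{z^{a+n}}{}_2R_1(1,a+n,1,b;-c z^{-b})$. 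Both sides of \eqref{eq:lemdiff} are analytic on $\mathbb C\setminus(-\infty,0]$ (the $_2R_1$ being the analytic continuation described after \eqref{eq:DotHG}), so the identity extends from $\{|z|^{b}>c\}$ to the whole admissible domain by the principle of identity of analytic functions.

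For part (i) the key tool is the Laplace representation
\[
\frac{\Gamma(a)}{z^{a}}\,{}_2R_1\!\left(1,a,1,b;\tfrac{-c}{z^{b}}\right)=\int_{0}^{\infty}e^{-zx}\,x^{a-1}E_{b}(-c x^{b})\,dx ,\qquad \mathrm{Re}(z)>0,
\]
obtained by the same term-by-term integration as in \eqref{eq:CEgmain}: expanding $E_{b}$, using $\int_{0}^{\infty}e^{-zx}x^{a-1+bj}dx=\Gamma(a+bj)z^{-a-bj}$ and resumming reproduces the series above, with Fubini justified first for $\mathrm{Re}(z)$ large (dominate $E_b(-cx^b)$ by $E_b(cx^b)$) and then propagated by analytic continuation in $z$, and — after regularising the $x\to0$ endpoint by a finite number of explicit terms plus an incomplete gamma function — meromorphically in $a$ to the stated range. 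Letting $z\to0$ inside the integral is legitimate by monotone convergence, since $e^{-zx}\uparrow1$ and $E_{b}(-y)\ge0$ for $y\ge0$, $b\in(0,1]$ (\citet{pol:48}); the limit is therefore the Mellin transform $\int_{0}^{\infty}x^{a-1}E_{b}(-cx^{b})\,dx$.

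It remains to evaluate this Mellin transform, which is a standard computation. The substitution $u=c x^{b}$ gives $\frac{c^{-a/b}}{b}\int_{0}^{\infty}u^{a/b-1}E_{b}(-u)\,du$; inserting the classical formula $\int_{0}^{\infty}u^{s-1}E_{b}(-u)\,du=\frac{\Gamma(s)\Gamma(1-s)}{\Gamma(1-bs)}$ (valid for $0<\mathrm{Re}(s)<1$, meromorphically continued otherwise) with $s=a/b$, together with the reflection formula $\Gamma(a/b)\Gamma(1-a/b)=\pi/\sin(\pi a/b)$, yields $c^{-a/b}\frac{\pi}{b\,\Gamma(1-a)\sin(\pi a/b)}$, as claimed; the degenerate case $a=b$ is consistent, both sides being $+\infty$. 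I expect the only genuine work to be the bookkeeping in extending the Laplace/Mellin identity below $\mathrm{Re}(a)=0$ (needed, e.g., for $a=-\gamma$) and to $a/b\le-1$, where the integral diverges at the origin and the identity must be read through its analytic continuation in $a$; once that is in place the rest is routine.
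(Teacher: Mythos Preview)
Your treatment of part (ii) is essentially the paper's: term-by-term differentiation of the power series, followed by analytic continuation. Correct and clean.

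For part (i) you take a genuinely different route. The paper simply quotes the large-argument asymptotic of $_2R_1$ from \citet{kil+sa:03}, Corollary~5.2.1, identifies the leading $(-w)^{-a/b}$ term, and applies Euler's reflection formula; this works uniformly over the whole parameter range in one stroke. Your Laplace/Mellin argument is more self-contained and elegant where it applies, but that is only for $0<a<b$ and real $z\downarrow 0$: the integral $\int_0^\infty e^{-zx}x^{a-1}E_b(-cx^b)\,dx$ diverges at the origin once $a\le 0$, and monotone convergence needs positive real $z$.

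The extension you label ``bookkeeping'' is in fact the substantive part. In the paper's applications (Theorem~\ref{thm:CF0}, Proposition~\ref{prop:cum}) the lemma is invoked precisely with $a=-\gamma<0$ and $a=1-\gamma$, and with complex $z=-iw$. To propagate your identity from the strip $0<a<b$ to those values you cannot just continue the \emph{right-hand side} in $a$; you must show that the \emph{left-hand side}, which is a limit as $z\to 0$, exists and depends analytically on $a$. That requires locally uniform control of $\frac{\Gamma(a)}{z^a}{}_2R_1(1,a,1,b;-c z^{-b})$ as $z\to 0$ over $a$-compacta---precisely the content of the Kilbas--Saigo asymptotic the paper invokes. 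A regularised integral representation (subtracting finitely many Taylor terms of $E_b$) can be written down, but when you take $z\to 0$ the subtracted terms $\Gamma(a+bj)z^{-a-bj}$ with $a+bj>0$ do not individually vanish; controlling them is again equivalent to knowing the asymptotic. So the approach is sound in spirit, but the gap is real, not cosmetic.
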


\begin{proof}
 Under the given assumptions we can use  \citet{kil+sa:03}, Corollary 5.2.1 and conclude that for all $w \in \mathbb C$ such that $|\mbox{Arg}(-w) <\pi|$ the following asymptotics when $w \rightarrow \infty$  for such function hold true\footnote{We believe that there is a typo in (5.2) in Theorem 5.2 of \citet{kil+sa:03}. A factor $\mu/\omega$ appears to be missing from the second term, since $A_2=\omega/\mu$ is apparently not accounted for at the denominator of the outer summation, as  it should follow from equation (4.9) of Theorem 4.2 from which equation (5.2) is derived. In our case $\mu=1$, $\omega=b$.}
\begin{align}\label{eq:2R1asympt}
&_2R_1(1, a, 1 ; b, z)=\frac{\Gamma(a-b)}{\Gamma(a)\Gamma(1-b)}(-z^{-1})+O(z^{-1})  \nonumber \\ &+\frac{\Gamma( 1-a/b) \Gamma( a/b)}{b \Gamma(a)\Gamma(1-a)} (-z^{-\frac{a}{b}})+ O(-z^{-\frac{a}{b}}).
\end{align}
Now setting $w=-c/z^b$, since Arg$(z) \neq \pi/b$ then Arg$(-w) \neq \pi$, and as  $z \rightarrow 0$, we have $w \rightarrow \infty$. Thus

\begin{align}
&\Gamma(a) z^{-a}\,_2R_1\left(1, a, 1, b, \frac{-c}{z^b} \right) \sim  z^{-a} \frac{\Gamma(1-a/b) \Gamma( a/b)}{b \Gamma(1-a)}\left( \frac{z^b}{c}\right)^{\frac{a}{b}}  
\end{align}
and the claim follows  from the above and  Euler's reflection formula. 

\medskip

Regarding $(ii)$ first assume $n=1$. We have, using the chain rule and differentiating term by term the uniformly convergent series
\begin{align}\label{eq:diff2r1}
\frac{d}{d z} & z^{-a} \Gamma(a) \, _2 R_1\left(1,a,1,b; \frac{-c}{z^{b}}\right)= -a \Gamma(a) z^{-a-1}  \, _2 R_1\left(1,a,1,b; \frac{-c}{z^{b}}\right) \nonumber \\& - z^{-a} \sum_{j=1}^{\infty}  \frac{\Gamma(b j+ a)}{\Gamma(b j+1)}\frac{(-c)^j}{z^{bj+1}} bj=  -a \Gamma(a) z^{-a-1} \,_2 R_1\left(1,a,1,b; \frac{-c}{z^{b}}\right) \nonumber \\ &+ z^{-a-1}\left(- \sum_{j=1}^{\infty}  \frac{\Gamma(b j+ a+1)}{\Gamma(b j+1)}\frac{(-c)^j}{z^{bj}} + a   \sum_{j=1}^{\infty}  \frac{\Gamma(b j+ a)}{\Gamma(b j+1)}\frac{(-c)^j}{z^{bj}} \right) \nonumber \\& = - z^{-a-1}\left( a \Gamma(a)+ \sum_{j=1}^{\infty}  \frac{\Gamma(b j+ a+1)}{\Gamma(b j+1)}\frac{(-c)^j}{z^{bj}}  \right)=-z^{-a-1}\Gamma(a+1) \, _2 R_1\left(1,a+1,1,b; \frac{-c}{z^{b}}\right).
\end{align}
Using  \eqref{eq:diff2r1} with $a'=a+n$ and appealing to the principle of mathematical induction the proof is complete.

\end{proof}

We can now prove the main result regarding the characteristic exponent of GTGS$^0$ r.v.s.

\begin{thm}\label{thm:CF0}   Let $X$ be a \GTGSo r.v., with $\alpha_+, \alpha_- \in (0,1)$, $\gamma_+, \gamma_- \neq 1$, and define, for $z \in \mathbb R$

\begin{equation}
\Theta_\pm(z)=\cos \frac{\alpha_\pm \pi}{2}\left(1- i\tan \frac{ \alpha_\pm\pi}{2}\mbox{\upshape{sgn}} ( z)   \right).
\end{equation}
We have, in the notation of Theorem \ref{thm:CF}:
\begin{itemize}
\item[(i)] if $ \gamma_+=\gamma_-=0$ then
\begin{equation}\label{eq:CFtheta0gamma0}
 \psi_X(z)= \frac{\delta_+}{\alpha_+}  \log \left(\frac{\lambda_+}{ |z|^{\alpha_+}\Theta_+(z) +\lambda_+} \right)+ \frac{\delta_-}{\alpha_-}  \log \left(\frac{\lambda_-}{ |z|^{\alpha_-}\Theta_-(z) +\lambda_-} \right)  + i  z (\mu-  \mu_0);
\end{equation}
\item[(ii)] if $\max\{\alpha_+ + \gamma_+, \alpha_-+\gamma_-\} \in (0,1)$ then
\begin{align}\label{eq:CFtheta0gammaless1}
& \psi_X(z)=\delta_+ \bigg( \Gamma(-\gamma_+)  |  z| ^{\gamma_+}\Theta_+(z) \,_2R_1\left(1, -\gamma_+, 1, \alpha_+;\frac{-\lambda_+}{ | z |^{\alpha_+}\Theta_+(z)}\right)    - \ell(\gamma_+, \alpha_+, \lambda_+) \bigg)
   \nonumber  \\ +& \delta_-\bigg( \Gamma(-\gamma_-)   | z| ^{\gamma_-}\Theta_-(z) \,_2R_1\left(1, -\gamma_-, 1, \alpha_-;\frac{-\lambda_-}{ | z| ^{\alpha_-}\Theta_-(z)}\right)    - \ell(\gamma_-, \alpha_-, \lambda_-) \bigg)  + i  z (\mu-  \mu_0);\end{align}   
\item[(iii)] if $ \min\{\alpha_+ + \gamma_+, \alpha_-+\gamma_-\} \in [1,3)$ then 
\begin{align}\label{eq:CFtheta0gammaless3}
 \psi_X(z)&=  \delta_+ \bigg( \Gamma(-\gamma_+)  |  z|^{\gamma_+}\Theta_+(z) \,_2R_1\left(1, -\gamma_+, 1, \alpha_+;\frac{-\lambda_+}{ |z  |^{\alpha_+}\Theta_+(z)}\right)    \nonumber  \\ &  - \ell(\gamma_+, \alpha_+,\lambda_+)  - i z \, \ell(\gamma_+-1, \alpha_+,\lambda_+)  \bigg)
  \nonumber \\  +& \delta_- \bigg( \Gamma(-\gamma_-)  |  z|^{\gamma_-}\Theta_-(z) \,_2R_1\left(1, -\gamma_-, 1, \alpha_-;\frac{-\lambda_-}{ |z  |^{\alpha_-}\Theta_-(z)}\right)    \nonumber  \\ &  - \ell(\gamma_-, \alpha_-, \lambda_-)  + i z  \, \ell(\gamma_--1, \alpha_-,\lambda_-)  \bigg)    + i  z (\mu +  \mu_1) ;
\end{align}
\end{itemize}
where  
\begin{equation}\label{eq:ell}
\mathcal \ell(x, y, z)=z^{x/y} \frac{\pi}{\sin\left(-\pi \frac{x}{y} \right)}\frac{1}{y \Gamma(1+x)}. 
\end{equation}
 The cases not accounted by $(i)-(iii)$ can be obtained by combining the expressions for positive and negative parts corresponding to the relevant inequalities satisfied by $\alpha_\pm + \gamma_\pm$.

\end{thm}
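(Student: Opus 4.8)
The plan is to recover the pure Mittag--Leffler characteristic exponents as limits $\theta_+,\theta_-\downarrow0$ of the formulas of Theorem~\ref{thm:CF}, pushing the limit simultaneously through the L\'evy--Khintchine integral that defines $\psi_X$ and through the explicit right-hand sides there. As in the proof of Theorem~\ref{thm:CF} I would treat only the positive part, suppress the $\pm$ subscripts, and recover the negative part via $x\mapsto|x|$, $z\mapsto-z$, using throughout the standing hypotheses $\alpha\in(0,1)$ and $\gamma\neq1$; the excluded value $\gamma=1$ would require the same argument applied to \eqref{eq:CFTGS1} and the analytic continuation of the Lerch transcendent, and is not treated here.

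First, the analytic side. I would fix a truncation function $h$ with associated drift $\mu$ and write, for every $\theta\ge0$, $\psi^{(\theta)}_X(z)-iz\mu=\int_{\mathbb R}\big(e^{izx}-1-izx\,h(x)\big)\,m_\gamma(x;\alpha,\lambda,\theta)\,dx$. Since $e^{-\theta x}\le1$, for fixed $z$ the integrand is dominated by a function integrable against the $\theta=0$ measure $m_\gamma(x;\alpha,\lambda,0)$: near the origin it is $O(x^2)$ if $h\equiv1$ and $O(|x|)$ if $h\equiv0$, hence integrable against $x^{-1-\gamma}$ when $\gamma<2$, resp.\ $\gamma<1$; for $|x|>1$ it is $O(1+|x|h(x))$, which by the Mittag--Leffler tail \eqref{eq:MLasymptInf} is integrable against $E_\alpha(-\lambda x^\alpha)x^{-1-\gamma}$ provided $h\equiv0$ and $\gamma+\alpha>0$, or $h\equiv1$ and $\gamma+\alpha>1$. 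Dominated convergence then gives $\psi^{(\theta)}_X(z)\to\psi^{(0)}_X(z)$ pointwise in $z$. I would therefore apply this to the truncation-free representation \eqref{eq:CFTGSg0} when $\gamma=0$, to the truncation-free form \eqref{eq:CFTGSgl2bis} of Remark~\ref{oss:altchar} (legitimate since $\gamma<1$ there) when $0<\gamma+\alpha<1$, and to the constant-truncation representation \eqref{eq:CFTGSgl2} when $\gamma+\alpha\ge1$; the constants $\mu_0=\int_{\{|x|<1\}}x\,m(x)\,dx$, $\mu_1=\int_{\{|x|>1\}}x\,m(x)\,dx$ in those formulas likewise converge to their $\theta=0$ values in the corresponding regimes.

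Next, the limits of the closed forms. For $\gamma=0$, letting $\theta\downarrow0$ in \eqref{eq:CFTGSg0} uses only $\theta^\alpha\to0$, continuity of the principal power off $(-\infty,0]$ (which $\theta-iz$ never meets for real $z$) giving $(\theta-iz)^\alpha\to(-iz)^\alpha=|z|^\alpha\Theta(z)$, and $\mathrm{Re}\big(|z|^\alpha\Theta(z)+\lambda\big)=|z|^\alpha\cos(\alpha\pi/2)+\lambda>0$ so the logarithm stays off its cut; this produces (i). For $\gamma\neq1$, continuity of $_2R_1$ on $\mathbb C\setminus[1,\infty)$ gives $(\theta-iz)^{\gamma}\,_2R_1\big(1,-\gamma,1,\alpha;-\lambda(\theta-iz)^{-\alpha}\big)\to|z|^{\gamma}\Theta(z)\,_2R_1\big(1,-\gamma,1,\alpha;-\lambda(|z|^\alpha\Theta(z))^{-1}\big)$, the argument staying off $[1,\infty)$ along the path; and the two $z$-independent summands I would evaluate with Lemma~\ref{lem:limits}(i): with $(a,b,c)=(-\gamma,\alpha,\lambda)$ it gives $\theta^\gamma\Gamma(-\gamma)\,_2R_1(1,-\gamma,1,\alpha;-\lambda\theta^{-\alpha})\to\ell(\gamma,\alpha,\lambda)$, and with $(a,b,c)=(1-\gamma,\alpha,\lambda)$ --- admissible exactly when $1-\gamma\le\alpha$, i.e.\ $\gamma+\alpha\ge1$ --- together with $\gamma\Gamma(-\gamma)=-\Gamma(1-\gamma)$ it gives $iz\,\gamma\,\theta^{\gamma-1}\Gamma(-\gamma)\,_2R_1(1,1-\gamma,1,\alpha;-\lambda\theta^{-\alpha})\to-iz\,\ell(\gamma-1,\alpha,\lambda)$. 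Substituting these into \eqref{eq:CFTGSgl2bis} (which carries no $iz$-term) when $\gamma+\alpha<1$ yields (ii), and into \eqref{eq:CFTGSgl2} when $\gamma+\alpha\ge1$ yields (iii); since the hypotheses of (ii) and (iii) force both of $\alpha_\pm+\gamma_\pm$ into the same regime, adding the positive and negative contributions closes those cases, and the mixed regimes follow by pairing the appropriate one-sided limits.

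The hard part is not the individual limits but the choice of representation to which one passes to the limit. When $\gamma+\alpha<1$ the three summands of \eqref{eq:CFTGSgl2} do \emph{not} converge separately as $\theta\downarrow0$: by the asymptotics \eqref{eq:2R1asympt}, $\theta^{\gamma-1}\,_2R_1(1,1-\gamma,1,\alpha;-\lambda\theta^{-\alpha})$ contains a term of order $\theta^{\gamma+\alpha-1}\to\infty$, so one either tracks the cancellation of these divergences or, preferably, switches to the truncation-free form \eqref{eq:CFTGSgl2bis} before taking the limit; this is precisely the reason case (ii) is stated through $\max\{\alpha_\pm+\gamma_\pm\}<1$ and case (iii) through $\min\{\alpha_\pm+\gamma_\pm\}\ge1$, with the boundary $\gamma+\alpha=1$ excluded alongside $\gamma=1$. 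The remaining point --- checking that the $_2R_1$ and logarithm arguments stay inside the relevant domains of analyticity along the entire segment $\theta\downarrow0$, so that continuity and Lemma~\ref{lem:limits} may be invoked --- is routine.
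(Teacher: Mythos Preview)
Your proposal is correct and follows essentially the same route as the paper: approximate by the $\theta>0$ laws of Theorem~\ref{thm:CF}, justify the limit in the L\'evy--Khintchine integral by dominated convergence, and then evaluate the limits of the closed forms via continuity of $_2R_1$ and Lemma~\ref{lem:limits}(i), switching to the truncation-free representation \eqref{eq:CFTGSgl2bis} when $\alpha+\gamma<1$ so that the individual summands converge. One small slip: you say the boundary $\alpha+\gamma=1$ is ``excluded,'' but in fact it falls under case~(iii) (Lemma~\ref{lem:limits} allows $a=b$, so $a=1-\gamma=\alpha=b$ is admissible), exactly as the theorem's half-open interval $[1,3)$ indicates.
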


\begin{proof} 
 Let $m$ be the L\'evy density of $X$ given by \eqref{eq:onedimlev} and let $\theta_+^n,\theta_-^n $, $n \in \mathbb N$, be two sequences of real numbers such that  $\theta_+^n,\theta_-^n  \rightarrow 0$,  as $n \rightarrow \infty$,  set $\btheta^n=(\theta^n_+, \theta^n_-)$ and let $X_n$ be  GTGS$_\bgamma(\balpha, \blambda,  \btheta^n, \bdelta; 0)$ r.v.s with L\'evy densities $m^n$.

  
We argue by dominated convergence.  The sequence $(e^{i z x}-1-i z x\I_{\{|x|<1 \}})m^n(x)$ is dominated in $x$ for all $z$  by the  integrable function $(e^{i z x}-1-i z x\I_{\{|x|<1 \}})m(x)$.
Therefore  
\begin{align}\label{eq:domCFgen}
& \int_{\mathbb R } (e^{i z x}-1-i z x\I_{\{|x|<1 \}})m(x)dx   = \lim_{n \rightarrow \infty} \int_{\mathbb R}(e^{i z x}-1-i z x\I_{\{|x|<1 \}})m^n(x)  dx. 
\end{align} We consider then the decompositions in positive/negative parts $m(x)=m_+(x)+m_-(x)$ and $m^n(x)=m_+^n(x)+m_-^n(x)$ and for brevity only analyze the positive one, the negative one being identical upon the usual sign and parameter modifications. 

Assume first $\gamma=0$. Using Theorem \ref{thm:CF}, $(i)$, in view of \eqref{eq:domCFgen} and continuity of the logarithm
\begin{align}\label{eq:domCF0} &\int_{\mathbb R } (e^{i z x}-1-i z x\I_{\{|x|<1 \}})m_+(x)dx   = \frac{\delta_+}{\alpha_+}\lim_{n \rightarrow \infty} \log \left(\frac{(\theta_+^n)^{\alpha_+} +\lambda_+}{(\theta_+^n-i z)^{\alpha_+}+\lambda_+} \right) -i \lim_{n \rightarrow \infty} z \mu_0^{n,+}   \nonumber \\ &= \frac{\delta_+}{\alpha_+}\log \left(\frac{ \lambda_+}{(-i z)^{\alpha_+}+\lambda_+} \right) - i  z  \mu^{+}_0 
\end{align}
with  $\mu^+_0=\int_0^1 x \, m_+(x)dx, \mu^{n,+}_0=\int_0^1 x \, m^n_+(x)dx$, when $\mu^{n,+}_0 \rightarrow \mu^+_0$ follows again by dominated convergence. But now, for $\alpha \in (0,1)$, $w \in \mathbb R$ a standard computation (\citet{sat:99}, p. 84--85) shows, \begin{equation}(-i w)^{\alpha}=|w|^\alpha \cos \frac{\alpha \pi}{2}\left(1- i\tan \frac{\pi \alpha}{2}\mbox{\upshape{sgn}} (w) \right) \end{equation}
and, after adding $i z \mu$, \eqref{eq:CFtheta0gamma0} is proved.

Next assume $\alpha_+ +\gamma_+ \in [1,3)$.  
Proceeding as in \eqref{eq:domCF0} we obtain
\begin{align}\label{eq:domCF1} &\int_{\mathbb R} (e^{i z x}-1-i zx\I_{|x| <1})m_+(x)dx  \nonumber \\ &=  \delta_+  \Gamma(-\gamma_+)  | z|^{\gamma_+}\Theta_+(z) \, _2R_1\left(1, -\gamma_+, 1, \alpha_+;\frac{-\lambda_+}{ |z |^{\alpha_+}\Theta_+(z)}\right)  \nonumber \\ &   - \delta_+ \Gamma(-\gamma_+) \lim_{n \rightarrow \infty} (\theta^n_+)^{\gamma} \,   _2R_1\left(1,  -\gamma_+, 1, \alpha_+;-\frac{\lambda_+}{ (\theta^n_+)^{\alpha_+}} \right)   \nonumber \\  - &  i z  \delta_+ \Gamma(1-\gamma_+) \lim_{n \rightarrow \infty}     \, (\theta^n_+)^{\gamma_+-1} \,  _2R_1\left(1, 1-\gamma_+, 1, \alpha_+;-\frac{\lambda_+}{ (\theta^n_+)^{\alpha_+}} \right)   + i  z \mu^{n,+}_1  \end{align}
where $\mu^+_1=\int_{\{x>1\}}m_+(x)dx, \mu^{n,+}_1=\int_{\{x>1\}}m^n_+(x)dx$, with $\mu^{n,+}_1 \rightarrow \mu^+_1$ by dominated convergence, which is ensured by the condition $\alpha_++\gamma_+>1$ combined with the asymptotic relation \eqref{eq:MLasymptInf}.
Now for the limits above we apply Lemma \ref{lem:limits}, $(i)$   with $b=\alpha_+$, $z=\theta^n_+$, $c=\lambda_+$ and $a$ respectively equal to $-\gamma_+$, and $ 1-\gamma_+$, to obtain \eqref{eq:CFtheta0gammaless3}.

The case $\alpha_++\gamma_+ \in (0,1]$ is dealt with similarly, but uses instead  expression \eqref{eq:CFTGSgl2bis} for the characteristic functions of  $X_n$.
\end{proof}

We have been unable to treat the cases $\gamma_+=\gamma_-=1$, since we could not derive asymptotic results for $\Phi$ in the domain of interest, but we conjecture a similar expression to hold. For complex numbers $z \in \mathbb C \setminus \mathbb R$ an asymptotic series is given in \citet{fer+al:17}.  The lack of analyticity of the GTGS$^0$ class compared to the analyticity of GTGS puts these two classes in a similar relationship relationship to the one between the S and CTS classes, in that the GTGS class is just an exponentially tempered version of the GTGS$^0$ class. The case $(i)$ of Theorem \ref{thm:CF0}, i.e. the TGS$^0_\balpha(\blambda, \bdelta; \mu_0)$ family can be of interest for applications, and by analogy with the \BG distribution we call these distributions Bilateral Linnik \BL.


\begin{esmp}\label{ex:LinGeostable} \emph{Linnik and geometric stable distributions.} Continuing Example \ref{ex:lintemp}, by letting $X$ be a   TGS$^{0,+}_\alpha(\lambda, \delta; \mu_0)$ r.v., we have the characteristic exponent 
\begin{equation}
\psi_X(z)= \delta \log \left(\frac{\lambda}{|z|^\alpha \Theta_+(z)+\lambda} \right)
\end{equation}
which  is the characteristic exponent of a  PL$(\alpha,\lambda^{-1}, \delta)$ r.v. 
\end{esmp}

\begin{esmp}\label{ex:BLsub}\emph{Subordinated representation of a Bilateral Linnik L\'evy process and geometric stability.}
  The L\'evy measure for general TGS distributions on the real line is provided as a Bochner integral in \citet{koz+sam:99} and is not of the form \eqref{eq:CFtheta0gamma0}.  Assume $X \in $ TGS$^{0,s}_\alpha( \lambda, \delta  ; \mu_0)=:$BL$^{s}(\alpha, \lambda, \delta)$.
In this case we have, observing that   $\Theta_\pm$ are complex conjugates
\begin{equation}\label{eq:GSsym}
\psi_X(z)=-\delta \log \left(|z|^{2 \alpha} \lambda^{-2}+ 2 \lambda^{-1} |z|^\alpha +1   \right).
\end{equation}  
By indicating $G$ a G$(1, \delta)$ law we see that  it is possible to write $\psi_X(z)=\psi_G(\psi_{Y_1+Y_2}(z))$ for independent stable laws $Y_1 \sim S_\alpha(2 \lambda^{-1}, 0; 0)$ and $Y_2 \sim S_{2 \alpha}(\lambda^{-2}, 0; 0)$. 
 \end{esmp}
 
 Example \ref{ex:BLsub} clarifies a crucial difference between exponential tempering and its geometric counterpart, the Mittag-Leffler tempering. Even if a positive geometric stable law (e.g. Pillai's) can be recovered as a limiting $\theta \rightarrow 0$ case of a TPL one, the same does not happen for geometric tempering on the real line. A BL law cannot possibly put in the form \eqref{eq:geomtransf} after taking such limit, since the sum of logarithms does not recombine as necessary. This is in contrast to the CTS situation, for which as tempering goes to 0 we recover a stable law. We recall that the L\'evy measures of GS distributions are not known in closed form.


\subsection{Cumulants}\label{sec:cum}

The difference in the analytic structure of the characteristic functions of GTGS laws depending upon $\btheta=0$ or $\btheta>0$ highlighted in Theorems \ref{thm:CF} and \ref{thm:CF0} is naturally reflected on  cumulants. As we noticed when $\btheta>0$ the GTGS characteristic function is analytical and thus  all the moments exist and can be computed by differentiating the characteristic function. 
However, when $\theta=0$ a crude analysis of the L\'evy measure using  \eqref{eq:MLasymptInf} yields that e.g. as $x  \rightarrow \infty$ then $m(x) \sim x^{-1-\alpha_+-\gamma_+}$, so that not all the L\'evy moments exist. Because of the equivalence of the finiteness of L\'evy and distribution moments (e.g. \citet{sat:99}, Chapter 25), this implies that not all of the \GTGSo cumulants  will be finite. 
 We have the following proposition.


 

\begin{prop}\label{prop:cum} Let $X$ be a \GTGS r.v. with $\theta_+, \theta_->0$, $\gamma \neq 1$.
Then its cumulants $k^X_n$, $n \in  \mathbb N_0$ are given by
\begin{align} 
k^X_1&=\mu_1 +\mu; 
 \label{eq:cumulants1}  \\
k^X_n& =  \frac{\delta_+\Gamma(n-\gamma_+) }{ \theta_+^{n-\gamma_+}}  \, _2R_1\left(1, n-\gamma_+  , 1, \alpha_+; - \frac{ \lambda_+ }{ \theta_+^{\alpha_+}}  \right)+ \nonumber \\ &\phantom{nnnnnnnnnnnnnnnnnn}(-1)^n \frac{\delta_-\Gamma(n-\gamma_-) }{\theta_-^{  n -\gamma_-}} \,  _2R_1 \left(1, n-\gamma_-  , 1, \alpha_- ; -\frac{ \lambda_- }{ \theta_-^{\alpha_-}} \right), \quad n>1.
 \label{eq:cumulantsn}
\end{align}
Let instead $Y$ be a \GTGSo r.v.. Then $Y$ has finite expectation if and only if $\min\{\alpha_ ++\gamma_+, \alpha_-+\gamma_- \}>1$ and finite variance if and only if $\min\{\alpha_ ++\gamma_+, \alpha_-+\gamma_- \}>2$ in which cases
\begin{align} 
E[Y]&=\mu_1 +\mu 
\label{eq:expTS0} \\
\mbox{\upshape{Var}}[Y] &= \delta_+ \ell(\gamma_+-2,\alpha_+, \lambda_+) + \delta_- \ell(\gamma_--2,\alpha_-,\lambda_-)  \label{eq:varTS0}.
\end{align}


\end{prop}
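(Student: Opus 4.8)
The plan is to avoid differentiating the cumbersome closed forms of Theorems~\ref{thm:CF} and~\ref{thm:CF0} and instead read the cumulants off the L\'evy triplet, using two standard facts (\citet{sat:99}, Ch.~25): an i.d.\ law with triplet $(\mu,0,\nu)$ has finite $n$-th moment iff $\int_{\{|x|>1\}}|x|^n\,\nu(dx)<\infty$, and when these moments exist its cumulants are $k_1=\mu+\int_{\{|x|>1\}}x\,\nu(dx)=\mu+\mu_1$ and, the Gaussian part being zero, $k_n=\int_{\mathbb R}x^n\,\nu(dx)$ for $n\ge 2$, obtained by differentiating under the integral sign in the L\'evy--Khintchine formula (the truncation term, being linear in $z$, drops out for $n\ge 2$).

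For part~(1), since $e^{-\theta_\pm r}E_{\alpha_\pm}(-\lambda_\pm r^{\alpha_\pm})$ decays exponentially, $\int_{\mathbb R}|x|^n m(x)\,dx<\infty$ for all $n$; thus $k^X_1=\mu+\mu_1$ is immediate, and for $n\ge2$ the task reduces, after splitting $m$ into its positive and negative parts and substituting $x\mapsto -x$ in the latter (this is what produces the sign $(-1)^n$), to the ``Mittag-Leffler Laplace transform''
\begin{equation*}
\int_0^\infty e^{-\theta t}\,t^{b-1}E_\alpha(-\lambda t^\alpha)\,dt=\frac{\Gamma(b)}{\theta^{b}}\;{}_2R_1\!\left(1,b,1,\alpha;-\frac{\lambda}{\theta^{\alpha}}\right),\qquad b=n-\gamma_\pm>0 .
\end{equation*}
I would prove this by expanding $E_\alpha(-\lambda t^\alpha)=\sum_j(-\lambda)^j t^{\alpha j}/\Gamma(1+\alpha j)$ and integrating term by term: this is legitimate by Tonelli when $\lambda<\theta^\alpha$, and, since $(1)_k/k!=1$, the resulting series is exactly $\Gamma(b)\,{}_2R_1(1,b,1,\alpha;-\lambda/\theta^\alpha)$ by~\eqref{eq:DotHG}. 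The restriction $\lambda<\theta^\alpha$ is then lifted by analytic continuation in $\lambda$, precisely as at the end of the proof of Theorem~\ref{thm:CF}: both sides are analytic in $\lambda>0$ (the integral by differentiation under the integral, the right-hand side because $-\lambda/\theta^\alpha$ stays in the cut plane $\mathbb C\setminus[1,\infty)$ on which ${}_2R_1$ extends analytically) and they agree on $(0,\theta^\alpha)$. Taking $b=n-\gamma_+$ for the positive part, $b=n-\gamma_-$ for the negative one and summing yields~\eqref{eq:cumulantsn}. (Alternatively, for $\gamma_\pm\in(0,2)\setminus\{1\}$ one may differentiate~\eqref{eq:CFTGSgl2} $n$ times at $z=0$ using the derivative rule of Lemma~\ref{lem:limits}(ii); this gives the same formula, but the route above is uniform in $\gamma_\pm\in[0,2)\setminus\{1\}$, covering in particular $\gamma_\pm=0$ via $b=n$.)

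For part~(2), the existence assertions follow from the moment criterion together with the asymptotics~\eqref{eq:MLasymptInf}: as $x\to\infty$, $x^{-\gamma_\pm}E_{\alpha_\pm}(-\lambda_\pm x^{\alpha_\pm})\sim (\lambda_\pm\Gamma(1-\alpha_\pm))^{-1}x^{-\gamma_\pm-\alpha_\pm}$, so $\int_{\{|x|>1\}}|x|^k m^0(x)\,dx<\infty$ for the $\pm$ part iff $\alpha_\pm+\gamma_\pm>k$; hence $E[Y]$ is finite iff $\min\{\alpha_++\gamma_+,\alpha_-+\gamma_-\}>1$, and $\operatorname{Var}[Y]$ iff this minimum exceeds $2$ (a vanishing $\delta_\pm$ simply removing the corresponding constraint). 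When finite, $E[Y]=k^Y_1=\mu+\mu_1$ as before, and $\operatorname{Var}[Y]=k^Y_2=\int_{\mathbb R}x^2m^0(x)\,dx=\delta_+\int_0^\infty t^{1-\gamma_+}E_{\alpha_+}(-\lambda_+t^{\alpha_+})\,dt+\delta_-\int_0^\infty t^{1-\gamma_-}E_{\alpha_-}(-\lambda_-t^{\alpha_-})\,dt$. Since $t\mapsto E_{\alpha_\pm}(-\lambda_\pm t^{\alpha_\pm})$ is nonnegative (indeed completely monotone, \citet{pol:48}), monotone convergence lets me write each integral as a $\theta\downarrow0$ limit of the Mittag-Leffler Laplace transform of part~(1) with $b=2-\gamma_\pm\in(0,\alpha_\pm)$,
\begin{equation*}
\int_0^\infty t^{1-\gamma_\pm}E_{\alpha_\pm}(-\lambda_\pm t^{\alpha_\pm})\,dt=\lim_{\theta\downarrow0}\frac{\Gamma(2-\gamma_\pm)}{\theta^{2-\gamma_\pm}}\;{}_2R_1\!\left(1,2-\gamma_\pm,1,\alpha_\pm;-\frac{\lambda_\pm}{\theta^{\alpha_\pm}}\right),
\end{equation*}
and Lemma~\ref{lem:limits}(i), which applies since $2-\gamma_\pm<\alpha_\pm<1$, together with Euler's reflection formula identifies this limit with $\ell(\gamma_\pm-2,\alpha_\pm,\lambda_\pm)$; this is~\eqref{eq:varTS0}.

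The hard part is the Mittag-Leffler Laplace transform and its handling over the full parameter range --- the analytic continuation in $\lambda$ beyond $\theta^\alpha$ in part~(1) and the $\theta\downarrow0$ limit in part~(2) --- since the defining series of ${}_2R_1$ converges only inside the unit disc; the remaining moment and cumulant bookkeeping and the tail estimates are routine.
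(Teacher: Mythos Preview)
Your proof is correct and follows essentially the same route as the paper: for part (1) you compute the L\'evy moments by expanding the Mittag--Leffler series, integrating term by term under $\lambda<\theta^\alpha$, and then continuing analytically---exactly as the paper does (the paper in fact writes the convergence condition as $\theta_+^{\alpha_+}<\lambda_+$, but your inequality is the correct one); for part (2) you use the same tail asymptotic \eqref{eq:MLasymptInf} for the existence criteria.

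The only minor difference is in the evaluation of $\operatorname{Var}[Y]$: the paper differentiates the $\theta=0$ characteristic exponent \eqref{eq:CFtheta0gammaless3} twice in $z$ via Lemma~\ref{lem:limits}(ii) and then sends $z\to0$ via Lemma~\ref{lem:limits}(i), whereas you compute the L\'evy second moment directly as the $\theta\downarrow0$ limit (by monotone convergence) of the formula already obtained in part (1) with $b=2-\gamma_\pm$, and then apply Lemma~\ref{lem:limits}(i). Both routes reduce to the same key limit; yours is arguably slightly cleaner since it bypasses the closed-form exponent of the $\theta=0$ case altogether.
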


\begin{proof}

 Denote $m(x)=m_+(x)+m_-(x)$ the  positive and negative parts of the L\'evy density. That $k^X_1=\mu+\mu_1$ for all i.d. distribution is well-known (e.g \citet{sat:99}, Example 25.12). 
In the case $\theta_+, \theta_->0$ we apply  that for analytic distributions cumulants   and L\'evy moments coincide for $n>1$  
 (again \citet{sat:99}, Chapter 25). In our case $k^X_n=\int_{\mathbb R} x^n m(x)dx $ , 
 and for $n \geq 1$ and $\theta_+^{\alpha_+} <\lambda_+$ it holds   
\begin{align}\label{cumLAP}
\int_0^\infty x^n m_+( dx) &=   \delta_+  \int_0^\infty  e^{-\theta_+ x} x^{n-\gamma_+-1} E_{\alpha_+} \left( -\lambda_+ \, x^{\alpha_+} \right) dx    \nonumber \\&= \delta_+ \theta_+^{n-\gamma_+}  \sum_{j=0}^\infty \left(\frac{-\lambda_+}{\theta_+^{\alpha_+}} \right)^j \frac{\Gamma(n-\gamma_++ \alpha_+ j )}{\Gamma(1+ \alpha_+ j)}
\end{align}
leading, together with the analogous calculation for $m_-$, to \eqref{eq:cumulants1}-\eqref{eq:cumulantsn}. The case for arbitrary parameters follows by analytic continuation. Because of the analyticity of $\psi_X$, for $\gamma \neq 0$ the conclusion is also immediate from Lemma \ref{lem:limits}, $(i)$ and $(ii)$, since we can  differentiate $\psi$ in $z$ and take the limit $z \rightarrow 0$ to obtain the cumulants.


Regarding $Y$, again because of \citet{sat:99}, Corollary 25.8 the first statement follows from $\int_{\{ |x|>1 \}}x^k m_\pm(x)dx <\infty$ if and only if $k <\alpha_\pm +\gamma_\pm$ , which once again is a consequence of \eqref{eq:MLasymptInf}.  For the variance it is possible to differentiate twice the characteristic exponent  \eqref{eq:CFtheta0gammaless3} and take the limit $z \rightarrow 0$. To this end, since $2-\gamma_+<\alpha_+$  we can apply Lemma \ref{lem:limits}  with $b=\alpha_+$, $a=-\gamma_+$ , $c=\lambda_+$, and $z$ replaced by $-i z$, so that by part $(i)$  it holds
\begin{equation}
\mbox{Var}[Y^+]=\psi''(0)=\frac{\delta_+\Gamma(2-\gamma_+) }{ (-iz)^{2-\gamma_+}}  \, _2R_1\left(1, 2-\gamma_+  , 1, \alpha_+; - \frac{ \lambda_+ }{ (-i z)^{\alpha_+}}  \right)
\end{equation}
and then, using part $(ii)$ 
\begin{align}
k^{Y^+}_2&= \delta_+ \lim_{z \rightarrow 0} \frac{\delta_+\Gamma(2-\gamma_+) }{ (-iz)^{2-\gamma_+}}  \, _2R_1\left(1, 2-\gamma_+  , 1, \alpha_+; - \frac{ \lambda_+ }{ (-i z)^{\alpha_+}}  \right)= \delta_+ \frac{\lambda_+^{\frac{\gamma_+-2}{\alpha_+}}}{\alpha_+}\frac{\pi}{\sin\left( \pi \frac{2-\gamma_+}{\alpha_+}\right)\Gamma(\gamma_+-1)}  \nonumber \\ &= \delta_+ \ell(\gamma_+-2,\alpha_+, \lambda_+).
\end{align}
The same arguments apply to $k^{Y^-}_2$, and the proof is finished. 

\end{proof}
Therefore GTGS$^0$ distributions have the peculiar property of retaining finite variance for some ranges of parameters, but no other higher moment. As mentioned in the introduction this can capture empirical findings on financial data and make this distribution an ideal candidate to model such quantities.

Information about the existence of moments can also be extracted by the Rosi\'nsky measure, which we shall study in the next section.



\begin{esmp}\emph{Cumulants of a {\upshape TGS} distribution}. When $X \sim$ TGS$_\balpha(\blambda, \btheta, \bdelta; \mu_0)$ we have 
\begin{align} 
k^X_n& =  \frac{\delta_+ (n-1)! }{ \theta_+^{n}}  \, _2R_1\left(1, n  , 1, \alpha_+; - \frac{ \lambda_+ }{ \theta_+^{\alpha_+}}  \right)+ \nonumber \\ &\phantom{nnnnnnnnnnnnnnnnnn}(-1)^n \frac{\delta_- (n-1)! }{\theta_-^{  n }} \,  _2R_1 \left(1, n, \alpha_- ; -\frac{ \lambda_- }{ \theta_-^{\alpha_-}} \right)
 \nonumber
 \\ & = \frac{\delta_+ (n-1)! }{ \theta_+^{n}}  \,  \sum_{j=0}^\infty{(\alpha_+ j +1})_{n-1} \left(\frac{-\lambda_+}{\theta_+^{\alpha_+}} \right)^j   + \nonumber \\ &\phantom{nnnnnnnnnnnnnnnnnn}(-1)^n \frac{\delta_- (n-1)! }{\theta_-^{  n }} \, \sum_{j=0}^\infty{(\alpha_- j +1})_{n-1} \left(\frac{-\lambda_-}{\theta_-^{\alpha_-}} \right)^j .
\end{align}
This extends the TPL cumulant analysis of \citet{tor+al:21} Proposition 2.2. One can show along the lines of such result that the TGS cumulants are given by
\begin{equation}\label{eq:equation}
\kappa^X_n=\frac{\delta_+}{\theta_+^n} g_{n-1}\left( \frac{ -\lambda_+ }{ \theta_+^{\gamma_+} }; \alpha_+ \right) +(-1)^n\frac{\delta_-}{\theta_-^n} g_{n-1}\left( \frac{ -\lambda_- }{ \theta_-^{\gamma_-} } ; \alpha_- \right)
\end{equation}
where $g_n(x;c)$ satisfies the recursion
\begin{equation}\label{eq:recurcumr}
g_{n}(x; c)= x c \frac{d}{dx}g_{n-1}(x;c)+ n g_{n-1}(x;c)
\end{equation}
with $c>0$  and
$g_{0}(x;c)=\frac{1}{1-x}$.

\end{esmp}

\section{Spectral representations, limits and absolute continuity}


We analyze more in detail the structure of one-dimensional GTGS distributions in relation to the theory of \citet{ros:07}. We find the spectral and Rosi\'nsky measures of such laws, identify short and long time L\'evy scaling limits and give conditions for absolute continuity with respect to a stable law, as well as with other GTGS distributions. In order to keep in line with the standard theory, for the most part of this section we assume $\gamma_+=\gamma_->0$ and we remove the boldface  throughout to indicate this. Extensions to asymmetric tempering can be easily obtained.

\begin{prop}\label{prop:spectral} A {\upshape GTGS}$_{\gamma}(\balpha, \blambda, \btheta, \bdelta;\mu )$ with $\alpha_+,\alpha_- \in (0,1)$ and $\gamma>0$ admits both a spectral density $s$ and a Rosi\'nsky density $r_\gamma$ given respectively by 
\begin{align}\label{eq:sdensity}
s(x; \balpha, \blambda,\btheta, \bdelta )&=\delta_+\frac{(x-\theta_+)^{\alpha_+-1}}{\pi }\frac{ \sin( \alpha_+ \pi )}{ \lambda_+^{-1} (x-\theta_+)^{2\alpha_+}+ 2 (x-\theta_+)^{\alpha_+} \cos(\alpha_+ \pi)+\lambda_+}\I_{\{x>\theta_+\}}+ \nonumber \\ & \delta_-\frac{(|x|-\theta_-)^{\alpha_--1}}{\pi }\frac{ \sin( \alpha_-\pi )}{ \lambda_-^{-1} (|x|-\theta_-)^{2\alpha_-}+ 2 (|x|-\theta_-)^{\alpha_-} \cos(\alpha_- \pi)+\lambda_-}\I_{\{x<-\theta_-\}}
\end{align}
and, with $1/0:=\infty$
\begin{align}\label{eq:rdensity}
&r_\gamma(x; \balpha, \blambda,\btheta,\bdelta)=\delta_+\frac{ x^{-\gamma+\alpha_+-1}}{\pi }\frac{(1-\theta_+ x)^{\alpha_+-1} \sin( \alpha_+ \pi)}{  \lambda_+^{-1} (1-\theta_+ x )^{2\alpha_+}+ 2(x(1-\theta_+ x))^{\alpha_+} \cos(\alpha_+ \pi)+\lambda_+ x^{2 \alpha_+}}\I_{\{0<x<\theta^{-1}_+\}}  \nonumber \\ &+ \delta_-\frac{ |x|^{-\gamma+\alpha_--1}}{\pi }\frac{(1-\theta_-| x|)^{\alpha_--1} \sin( \alpha_-\pi )}{  \lambda_-^{-1} (1-\theta_- |x| )^{2\alpha_-}+ 2(|x|(1-\theta_- |x|))^{\alpha_-} \cos(\alpha_- \pi)+\lambda_- |x|^{2 \alpha_-}}\I_{\{-\theta^{-1}_-<x<0\}}.
\end{align}

\end{prop}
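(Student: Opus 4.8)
The plan is to identify the Bernstein representing measure of the completely monotone tempering function $q(\cdot,u)$ from \eqref{eq:TGS} and transport it through the correspondences \eqref{eq:specmeas}--\eqref{eq:rosmeas}. By Proposition~\ref{prop:legit}, $q(r,u;\alpha,\lambda,\theta)=e^{-\theta(u)r}E_{\alpha(u)}(-\lambda(u)r^{\alpha(u)})$ is completely monotone in $r$ with $q(0+,u)=1$, so Bernstein's theorem yields a probability measure $Q_u$ on $\R_+$ with $q(r,u)=\int_0^\infty e^{-rs}Q_u(ds)$. The substantial step is to make $Q_u$ explicit. I would use the Laplace-type (Stieltjes) representation of the Mittag-Leffler function --- Pollard's complete monotonicity of $E_\alpha(-\,\cdot)$ (\citet{pol:48}) together with the inversion of the fractional relaxation eigenfunction, see \citet{gor+al:14} --- in the form that, for $\alpha\in(0,1)$ and $\lambda>0$,
\begin{equation*}
E_\alpha(-\lambda r^\alpha)=\int_0^\infty e^{-rs}\,\frac{\sin(\alpha\pi)}{\pi}\,\frac{\lambda\,s^{\alpha-1}}{s^{2\alpha}+2\lambda s^\alpha\cos(\alpha\pi)+\lambda^2}\,ds ,
\end{equation*}
the kernel being a probability density on $(0,\infty)$ (checked via $s=\lambda^{1/\alpha}v$, which reduces its total mass to $E_\alpha(0)=1$). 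If a self-contained derivation is preferred to a citation, it can be obtained by inserting Pollard's representation $E_\alpha(-y)=\int_0^\infty e^{-yx}\nu_\alpha(x)\,dx$ with $y=\lambda r^\alpha$, rewriting each $e^{-(\lambda x)r^\alpha}$ as the Laplace transform in $r$ of a rescaled one-sided $\alpha$-stable density, swapping the two integrations, and evaluating the inner $s$-integral via the known Mellin transforms of $\nu_\alpha$ and of the stable density; this Mellin collapse is the fiddly part, and everything afterwards is bookkeeping. Since $e^{-\theta r}\int_0^\infty e^{-rs}h(s)\,ds=\int_0^\infty e^{-rs}h(s-\theta)\I_{\{s>\theta\}}\,ds$, the damping merely translates the density, so (after dividing numerator and denominator by $\lambda$) $Q_u$ has density
\begin{equation*}
g_u(s)=\frac{\sin(\alpha(u)\pi)}{\pi}\,\frac{(s-\theta(u))^{\alpha(u)-1}}{\lambda(u)^{-1}(s-\theta(u))^{2\alpha(u)}+2(s-\theta(u))^{\alpha(u)}\cos(\alpha(u)\pi)+\lambda(u)}\,\I_{\{s>\theta(u)\}} .
\end{equation*}

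For $d=1$ with the spherical measure \eqref{eq:sigma0} one has $S^0=\{-1,1\}$, so \eqref{eq:specmeas} reads $Q(B)=\delta_+\int_0^\infty\I_B(r)g_+(r)\,dr+\delta_-\int_0^\infty\I_B(-r)g_-(r)\,dr$, where $g_\pm$ is $g_u$ evaluated at the parameters $(\alpha_\pm,\lambda_\pm,\theta_\pm)$. Hence $Q$ is absolutely continuous with density $\delta_+g_+(x)\I_{\{x>0\}}+\delta_-g_-(|x|)\I_{\{x<0\}}$, which --- since $g_\pm$ is supported on $\{\,\cdot>\theta_\pm\}$ --- is precisely \eqref{eq:sdensity}. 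For the Rosi\'nsky density I apply \eqref{eq:rosmeas}, using that on $\R$ the inversion map is $x\mapsto x/|x|^2=1/x$: for $x>0$ the change of variables $x=1/y$ in $R(B)=\int_{\R}\I_B(x/|x|^{2})|x|^{\gamma}Q(dx)$ gives $R$ the density $r_\gamma(x)=\delta_+\,x^{-\gamma-2}g_+(1/x)$ on $(0,\infty)$, and symmetrically $\delta_-\,|x|^{-\gamma-2}g_-(1/|x|)$ on $(-\infty,0)$. Substituting $g_+$ and using $1/x-\theta_+=(1-\theta_+x)/x$ to clear all powers of $x$ from numerator and denominator (the powers collecting as $x^{(1-\alpha_+)+2\alpha_+-\gamma-2}=x^{\alpha_+-\gamma-1}$), together with the support condition $\{1/x>\theta_+\}=\{0<x<\theta_+^{-1}\}$ and the convention $\theta_+^{-1}=\infty$ when $\theta_+=0$, yields \eqref{eq:rdensity}; the negative half is identical with $x$ replaced by $|x|$.

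The genuinely nontrivial ingredient is the explicit Bernstein representation of $E_\alpha(-\lambda r^\alpha)$ \emph{as a function of $r$}, as opposed to the elementary Stieltjes representation of $E_\alpha(-y)$ in $y$; once that kernel is in hand, the passage to \eqref{eq:sdensity} and \eqref{eq:rdensity} is just specialisation of Rosi\'nsky's correspondences together with elementary algebra and careful tracking of supports.
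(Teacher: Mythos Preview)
Your proposal is correct and follows essentially the same route as the paper: the paper likewise obtains the Bernstein density of $E_\alpha(-\,\cdot^\alpha)$ by citation (to \citet{deOl+aL:11} rather than \citet{gor+al:14}), rescales and translates it to account for $\lambda$ and the exponential damping $e^{-\theta r}$, and then applies the substitution $y=1/x$ in \eqref{eq:rosmeas} to pass from $Q$ to $R$. Your parenthetical suggestion of a self-contained derivation via Pollard's representation and Mellin transforms is more than the paper does, but the main line of argument is identical.
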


\begin{proof}
By e.g. \citet{deOl+aL:11},  equation (2.16),  
the density in the Bernstein representation of $E_\alpha(-\cdot^\alpha)$ is
\begin{equation}\label{eq:poldens}
s^E(x)=\frac{x^{\alpha-1}}{\pi }\frac{ \sin(\alpha \pi  )}{ x^{2\alpha}+ 2 x^{\alpha} \cos(\alpha \pi)+1}, \qquad x>0
\end{equation}
which is the p.d.f of the ratio of two independent $\alpha$-stable random variables (see \citet{jam:06}, p. 9). 
After an application of the  Laplace transform rules we see that the tempering function $q(x)=q(x,1)\I_{\{x>0\}}+q(|x|, -1)\I_{\{x <0 \}} $ is given by 
\begin{equation}\label{eq:qbern+}
q(x)=\left\{ \begin{array}{lr}\displaystyle{\int_0^\infty e^{-y x}  \lambda_+^{-1/\alpha_+}s^{E}\big( (y-\theta_+) \lambda_+^{-1/\alpha_+}\big)}dy & x > 0, \\  \\
\displaystyle{\int_0^\infty e^{-y |x|}  \lambda_-^{-1/\alpha_-}s^{E}\big( (y-\theta_-) \lambda_-^{-1/\alpha_-}\big)}
 dy  & x< 0.  \end{array}  \right.
\end{equation}

Substituting \eqref{eq:poldens} in \eqref{eq:qbern+} and using this in \eqref{eq:specmeas} with $\sigma$ as in  
\eqref{eq:sigma0} we obtain  \eqref{eq:sdensity}. Moreover, writing explicitly \eqref{eq:rosmeas}, for $B \in \mathcal B(\mathbb R_+)$ we have, using the integral substitution $y=1/x$
\begin{align}
R(B)&= \delta_+ \int_{\theta_+}^\infty \I_{B}\left(\frac{\mbox{sgn}(x)}{x} \right)   \frac{x^\gamma}{\pi }\frac{ (x-\theta_+)^{\alpha_+-1} \sin(\pi \alpha_+ )}{ \lambda_+^{-1} (x-\theta_+)^{2\alpha_+}+ 2 (x-\theta_+)^{\alpha_+} \cos(\alpha_+ \pi)+\lambda_+} dx \nonumber \\& =\delta_+\int_0^{1/\theta_+}  \I_{B}(y) \frac{ y^{-\gamma-2}}{\pi }\frac{(y^{-1}-\theta_+)^{\alpha_+-1} \sin(\pi \alpha_+ )}{  \lambda_+^{-1} (y^{-1}-\theta_+)^{2\alpha_+}+ 2 (y^{-1}-\theta_+)^{\alpha_+} \cos(\alpha_+ \pi)+\lambda_+} dy
\nonumber \\ 
 &=\delta_+\int_0^{1/\theta_+}  \I_{B}(y) \frac{ y^{-\gamma-\alpha_+-1}}{\pi }\frac{(1-\theta_+ y)^{\alpha_+-1} \sin(\pi \alpha_+ )}{  \lambda_+^{-1} y^{-2 \alpha_+}(1-\theta_+ y )^{2\alpha_+}+ 2 y^{-\alpha_+}(1-\theta_+ y)^{\alpha_+} \cos(\alpha_+ \pi)+\lambda_+} dy \nonumber \\ & = \delta_+ \int_0^{1/\theta_+}  \I_{B}(y) \frac{ y^{-\gamma+\alpha_+-1}}{\pi }\frac{(1-\theta_+ y)^{\alpha_+-1} \sin(\pi \alpha_+ )}{  \lambda_+^{-1} (1-\theta_+ y )^{2\alpha_+}+ 2(y(1-\theta_+ y))^{\alpha_+} \cos(\alpha_+ \pi)+\lambda y^{2 \alpha_+}} dy.
\end{align}
The analogous computation holds for the negative part when $B \in \mathcal B(\mathbb R_-)$, and  
\eqref{eq:rdensity} follows.


\end{proof}

For a GTGS$_\gamma(\balpha, \blambda,\btheta, \bdelta; \mu)$   we denote  GTGS$_\gamma(r_\gamma(x; \balpha, \blambda,\btheta, \bdelta); \mu)$  the parametrization using the Rosi\'nsky  density $r_\gamma$ and a drift $\mu$.

\bigskip

\begin{oss}\label{oss:cumrev} \emph{Moments revisited}.
According to \citet{ros:07}, Proposition 2.7, finiteness of the moments of a TGS law is in some sense equivalent to the finiteness of the moments of the measure $R$. More precisely the $p$-th moment, $p >0$, is always finite for $p < \gamma$; it is finite for $p>\gamma$ if and only if $\int_{\{|x|>1\}}|x|^p R(dx) <\infty$, and for $p=\gamma$ it is finite if and only if $\int_{\{|x|>1\}}|x|^\gamma \log|x| R(dx) <\infty$. Using \eqref{eq:rdensity}  we see that these integrals always converge for any $p$ whenever $\theta_+,\theta >0$, in accordance with Theorem \ref{thm:CF} and Proposition \ref{prop:cum}. Instead if $\theta_+=\theta_-=0$ we have, with $r$ given by \eqref{eq:rdensity},
\begin{align}\label{eq:asymptr}
x^pr_\gamma(x) \sim \left\{\begin{array}{ll} \displaystyle{\delta_+ \frac{  x^{p-\gamma+\alpha_+-1}}{\pi } \frac{ \sin( \alpha_+ \pi)}{  \lambda_+^{-1} + 2x^{\alpha_+} \cos(\alpha_+ \pi)+\lambda_+ x^{2 \alpha_+}} =O(x^{p-\gamma-1-\alpha_+})}, & x \rightarrow \infty \\ \displaystyle{\delta_- \frac{ |x|^{p-\gamma+\alpha_--1}}{\pi } \frac{ \sin( \alpha_- \pi)}{  \lambda_-^{-1} + 2|x|^{\alpha_-} \cos(\alpha_- \pi)+\lambda_- |x|^{2 \alpha_-}} = O(|x|^{p-\gamma-1-\alpha_-}) }, & x \rightarrow -\infty. 
\end{array} \right.
\end{align}
 Therefore for  $p=\gamma$ it is $x^{-\gamma}r_\gamma(x)\log(x) = O(\log(x)x^{-\alpha_+-1})$ as $ x \rightarrow \infty$ and $x^{-\gamma}r_\gamma(x)\log(x) = O(\log(-x)(-x)^{-\alpha_- -1})$, as $ x \rightarrow -\infty$ which both converge, and thus the boundary moment is finite.
Moreover, when $p >\gamma$, the convergence condition is  $\min\{\alpha_+ + \gamma, \alpha_- + \gamma\}>p$,  again consistently with Proposition \ref{prop:cum}.
Furthermore, still by \citet{ros:07}, Proposition 2.7, the condition for the finiteness of the exponential moments (clearly unavailable when $\theta_+=\theta_-=0$) of order  $\beta>0$ is  $R(\{x: |x|>\beta^{-1}\})=0$. From \eqref{eq:rdensity} the latter holds if and only if $\beta \leq \theta_0=\min\{\theta_+, \theta_-\}$. By standard theory (\citet{luk+sza:52}), this implicates the analyticity of the characteristic function of the GTGS law with positive exponential tempering at least in the strip $B=\{z \in \mathbb C \, : |\mbox{Im}(z)|<\theta_0\}$,  in accordance with Theorem \ref{thm:CF}. 
\end{oss}

Spectral measures are useful to understand the short and long time behavior of tempered geometric stable L\'evy processes. 
 For clarity of exposition we further confine our treatment to the case $\alpha_+=\alpha_-$. How to deal with the  case $\alpha_+ \neq \alpha_-$, involving stable limits with different positive and negative stability indices, should be clear.

\begin{prop}\label{prop:time}

 
 Let $X=(X_t)_{t \geq 0}$ be a  {\upshape GTGS}$_\gamma(r_\gamma(x;  (\alpha,\alpha), \blambda, \btheta, \bdelta), \mu)$ with $\gamma \neq 1$ and $\mu=\mu_0$ if $\gamma \in (0,1)$, $\mu=-\mu_1$ if $\gamma \in (1,2)$. Define for all $h>0$  the scaled processes $X^h=(X_{h t})_{t \geq 0}$. 
 We have, as $h \rightarrow 0$
\begin{itemize}
\item[(i)]  $h^{-1/\gamma}X^h \rightarrow Z$ where $Z=(Z_t)_{t \geq 0}$ is a stable L\'evy process such that $Z_1 \sim$  S$_\gamma(\bdelta; \mu^*)$, with $\mu^*=\mu^*_0$ if $\gamma \in (0,1)$ and $\mu^*=-\mu^*_1$ if $\gamma \in (1,2)$, with $\mu^*_1$, $\mu^*_0$ relative to the L\'evy measure of $Z$; 
\end{itemize}
and, as $h \rightarrow \infty$\begin{itemize} 
\item[(ii)] if $\theta_+=\theta_-  =0$ and $\alpha+\gamma \in (0,1) \cup (1,2)$ then  $h^{-(\alpha+\gamma)^{-1}}X_h \rightarrow Z$ where $Z$ is a stable L\'evy process such that $Z_1 \sim  $ {\upshape S}$_{\alpha+\gamma}((\delta_+^*,\delta_-^*); \mu^*)$ with
\begin{equation}\label{eq:dstars}
\delta^*_+=\frac{\delta_+}{\Gamma(1-\alpha)\lambda_+}, \qquad
 \delta^*_-=\frac{\delta_-}{\Gamma(1-\alpha)\lambda_-}; \end{equation}  
\item[(iii)] if $\theta_+, \theta_->0$ or $\alpha+\gamma>2$ that $h^{-1/2}X^h \rightarrow B$ where $B=(B_t)_{t \geq 0}$ is a Gaussian L\'evy process with triplet $(0, \mbox{\upshape{Var}}[X_1] , 0)$. 
\end{itemize}
 The convergences above are in the Skorohod space $D([0, \infty), \mathbb R)$.
\end{prop}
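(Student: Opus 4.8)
The plan is to establish each of the three convergences at the level of characteristic exponents and then invoke the classical equivalence, valid for L\'evy processes, between convergence of one-dimensional (equivalently finite-dimensional) marginals towards an infinitely divisible law and weak convergence of the processes in $D([0,\infty),\mathbb R)$; this removes any need for a separate tightness argument (see e.g.\ \citet{sat:99} and the standard theory of functional limits of L\'evy processes). Since a L\'evy process $Y$ with exponent $\psi_Y$ satisfies that $t\mapsto c\,Y_{ht}$ has characteristic exponent $z\mapsto h\,\psi_Y(cz)$, it suffices in each regime to compute $\lim h\,\psi_X(c_h z)$ for the indicated norming $c_h$ and to recognize the resulting function as the exponent of the claimed limit process. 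One works with the L\'evy--Khintchine integral and the change of variable $y=c_h x$; the assumptions on $\mu$ are used exactly to cancel the linear term, so that for $\gamma\in(0,1)$ one has $\psi_X(z)=\int_{\mathbb R}(e^{izx}-1)m(x)\,dx$, and for $\gamma\in(1,2)$ the centered form $\psi_X(z)=\int_{\mathbb R}(e^{izx}-1-izx)m(x)\,dx$, with the corresponding centered form used at the limiting index $\alpha+\gamma$ in part~(ii).

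For (i) ($c_h=h^{-1/\gamma}$, $h\to0$) the substitution turns the rescaled radial density into $h\,c_h^{-1}m(c_h^{-1}y)=\delta_\pm\,|y|^{-1-\gamma}\,e^{-\theta_\pm h^{1/\gamma}|y|}\,E_\alpha(-\lambda_\pm h^{\alpha/\gamma}|y|^{\alpha})$ for $\pm y>0$; the exponent $-1-\gamma$ is reproduced precisely because the norming exponent is $1/\gamma$. Since $h^{1/\gamma},h^{\alpha/\gamma}\to0$ and $E_\alpha(0)=1$, this tends pointwise to the $\gamma$-stable density $\delta_\pm|y|^{-1-\gamma}$, and since $e^{-\theta_\pm h^{1/\gamma}|y|}E_\alpha(-\lambda_\pm h^{\alpha/\gamma}|y|^\alpha)\le1$ dominated convergence applies (dominating function $\delta_\pm|y|^{-1-\gamma}$ times $\min(|z||y|,2)$ when $\gamma<1$, resp.\ times $\min(z^2y^2,2+|z||y|)$ when $\gamma>1$). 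Hence $h\,\psi_X(c_hz)$ converges to the exponent of $\mathrm S_\gamma(\bdelta;\mu^*)$ with its natural centering $\mu^*$. (Equivalently, one may combine the scaling identity of Remark~\ref{rem:scalesum} with the limiting relations of Remark~\ref{special}(iv)--(v).)

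For (ii) ($\theta_+=\theta_-=0$, $c_h=h^{-1/(\alpha+\gamma)}$, $h\to\infty$) the same substitution gives $h\,c_h^{-1}m(c_h^{-1}y)=h^{\alpha/(\alpha+\gamma)}\delta_\pm\,|y|^{-1-\gamma}\,E_\alpha(-\lambda_\pm h^{\alpha/(\alpha+\gamma)}|y|^\alpha)$; now the argument of $E_\alpha$ tends to $-\infty$, so \eqref{eq:MLasymptInf} yields $h^{\alpha/(\alpha+\gamma)}E_\alpha(-\lambda_\pm h^{\alpha/(\alpha+\gamma)}|y|^\alpha)\to\bigl(\Gamma(1-\alpha)\lambda_\pm\bigr)^{-1}|y|^{-\alpha}$, so that the rescaled density converges pointwise to $\delta^*_\pm|y|^{-1-(\alpha+\gamma)}$ with $\delta^*_\pm$ as in \eqref{eq:dstars}. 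To dominate it one sets $w=\lambda_\pm h^{\alpha/(\alpha+\gamma)}|y|^\alpha$ and uses $\sup_{w>0}wE_\alpha(-w)<\infty$ (which follows from complete monotonicity of $E_\alpha(-\cdot)$ together with the tail \eqref{eq:MLasymptInf}), giving $h\,c_h^{-1}m(c_h^{-1}y)\le C\,\delta_\pm\lambda_\pm^{-1}|y|^{-1-(\alpha+\gamma)}$ uniformly in $h$; multiplied by $|e^{izy}-1|$ (resp.\ $|e^{izy}-1-izy|$) this is integrable exactly when $\alpha+\gamma\in(0,1)$ (resp.\ $\in(1,2)$), and $h\,\psi_X(c_hz)$ converges to the exponent of $\mathrm S_{\alpha+\gamma}((\delta^*_+,\delta^*_-);\mu^*)$.

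For (iii) ($c_h=h^{-1/2}$, $h\to\infty$) Proposition~\ref{prop:cum} guarantees $\mathrm{Var}[X_1]<\infty$ in both sub-cases ($\theta_\pm>0$, or $\theta_\pm=0$ with $\alpha+\gamma>2$), while the centering makes $X$ have zero mean; from $h\,\psi_X(h^{-1/2}z)=h\int_{\mathbb R}(e^{ih^{-1/2}zx}-1-ih^{-1/2}zx)m(x)\,dx$, the inequality $|e^{iu}-1-iu|\le u^2/2$ and dominated convergence (dominating function $\tfrac12z^2x^2m(x)$) one obtains $h\,\psi_X(h^{-1/2}z)\to-\tfrac12z^2\int_{\mathbb R}x^2m(x)\,dx=-\tfrac12z^2\,\mathrm{Var}[X_1]$, the exponent of a Gaussian L\'evy process with triplet $(0,\mathrm{Var}[X_1],0)$. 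In every case the pointwise limit is a genuine L\'evy exponent, so the convergence upgrades to finite-dimensional and then to $D([0,\infty),\mathbb R)$ convergence as recalled above. The main obstacle is the dominated-convergence estimate in (ii): whereas in (i) the bound is free from $E_\alpha(-\cdot)\le1$, in (ii) a uniform-in-$h$ bound on the rescaled density genuinely requires the sharp fact $\sup_{w>0}wE_\alpha(-w)<\infty$, and one must further match the centering of $X$ ($\mu_0$ versus $-\mu_1$) to the regime of the limiting index, since the wrong choice leaves a linear term of order $h^{(\alpha+\gamma-1)/(\alpha+\gamma)}$ that diverges; the remaining bookkeeping (positive and negative parts run in parallel, and two distinct stable indices when $\alpha_+\ne\alpha_-$) is routine.
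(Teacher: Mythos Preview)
Your argument is correct and takes a genuinely different route from the paper. For part~(i) the paper invokes \citet{ros:07}, Theorem~3.1, as a black box, verifying only the moment condition $\int |x|^\gamma r_\gamma(x)\,dx<\infty$ on the Rosi\'nski density; for parts~(ii) and~(iii) the paper works with the explicit characteristic exponents of Theorems~\ref{thm:CF} and~\ref{thm:CF0}, expanding the ${}_2R_1$ function via the asymptotic series of \citet{kil+sa:03} (Theorem~5.2) to extract the leading power of $h$. By contrast, you run all three cases through a single mechanism: substitute $y=c_h x$ in the L\'evy--Khintchine integral, identify the pointwise limit of the rescaled L\'evy density, and close with dominated convergence. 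The key technical ingredient you need that the paper does not is the uniform bound $\sup_{w>0} w\,E_\alpha(-w)<\infty$ (for~(ii)), which you correctly derive from complete monotonicity plus the tail asymptotic~\eqref{eq:MLasymptInf}; this replaces the paper's special-function expansion and yields the constants $\delta_\pm^*$ directly from the first-order term of~\eqref{eq:MLasymptInf}. Your approach is more elementary and self-contained, requiring no ${}_2R_1$ machinery, while the paper's route exploits (and illustrates the utility of) the exact formulas it has already derived. Your observation about the centering mismatch in~(ii)---that the drift assumption $\mu=\mu_0$ versus $\mu=-\mu_1$ is tied to $\gamma$ in the statement but must in fact match the regime of $\alpha+\gamma$ for the linear term not to blow up---is well taken; the paper handles this by silently switching to the representation~\eqref{eq:CFtheta0gammaless3} with $\mu=-\mu_1$ when $\alpha+\gamma\in(1,2)$, so the issue is present in both proofs and is really a defect of the proposition's stated hypothesis rather than of either argument.
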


\begin{proof}
Let $r_\gamma$ be the Rosi\'nsky density \eqref{eq:rdensity}. For $(i)$ from \citet{ros:07}, Theorem 3.1, a sufficient condition for the
statement to hold is that \begin{equation}\label{eq:rosshorttime}
\int_{\mathbb R} x^\gamma r_\gamma(x; \balpha, \blambda,\btheta,\bdelta)dx <\infty.
\end{equation}
When $\theta_+,\theta_->0$ the above is  trivially verified since in that case $r_\gamma$ is supported on a bounded set. When $\theta_+=\theta_-=0$ then using \eqref{eq:asymptr} with $p=\gamma$, one has $x^\gamma r_\gamma(x) \sim O(|x|^{-1-\alpha})$, as $x \rightarrow \pm \infty$  so that \eqref{eq:rosshorttime} still holds.  

Now to show $(ii)-(iii)$ we begin by proving the Gaussian limit in $(iii)$ under the assumption $\theta_ + , \theta_->0$. Denote by $\psi_h(z)$ the characteristic exponent of the L\'evy process $X^h$. By \citet{kal:02}, Theorem 15.17, for the claim to hold it is sufficient to show convergence in distribution which we verify on the characteristic exponents. Write the decomposition of $\psi_h$ in spectrally positive and negative parts as $\psi_h=\psi_h^++\psi_h^-$, Now  we can use the integral \eqref{eq:CEgmain2} since $h^{-1} \sim 0$, which yields, after interchanging the summation order
\begin{align}\label{eq:longgauss}\lim_{h \rightarrow \infty}h^{-1/2}\psi^+_h(z )&=\lim_{h \rightarrow \infty}h\psi^+(z h^{-1/2}) \nonumber \\ &=\theta_+^{\gamma} \delta_+ \lim_{h \rightarrow \infty} h \,   \sum_{j=0}^\infty \left(\frac{-\lambda_+}{\theta_+^{\alpha}} \right)^j \frac{1}{\Gamma(1+ \alpha j)} \sum_{k=2}\frac{\left(\frac{i z h^{-1/2}}{\theta_+}\right)^k}{k !} \Gamma( k-\gamma + \alpha j ) \nonumber \\  &= \delta_+ \lim_{h \rightarrow \infty}  h \,  \theta_+^{\gamma_+-2}  \sum_{j=0}^\infty \left(\frac{-\lambda_+}{\theta_+^{\alpha}} \right)^j \frac{ \Gamma( 2-\gamma + \alpha j ) }{\Gamma(1+ \alpha j)}\left( - \frac{z^2 h^{-1}}{2}\right)+o(1) \nonumber \\  &= -\frac{z^2}{2}\delta_+ \theta_+^{\gamma-2} \Gamma(2-\gamma) \, _2R_1\left(1, 2-\gamma, 1;\alpha, -\frac{\lambda_+}{\theta_+^{\alpha}}\right)
.\end{align}
After carrying out the corresponding computation for $\psi^-_h$, recalling Proposition \ref{prop:cum} we notice that in the final expression the factor Var$[X_1]$ appears as multiplying the characteristic exponent of the standard Brownian motion, which establishes the claim.

To prove $(ii)$ and $(iii)$ when $\theta_+=\theta_-=0$ we proceed by setting $\kappa>0$ and  analyze $\psi_h(z h^{-1/k})=h \psi(z h^{-1/\kappa})$. We must expand \eqref{eq:CFtheta0gammaless1} and \eqref{eq:CFtheta0gammaless3} around $h^{-1} \sim 0$; in order to do this we can use \citet{kil+sa:03}, Theorem 5.2., providing a series representation for the $_2 R_1$ function  of complex argument outside the unit circle. Under our parameter specification it holds  (recall also footnote 1) 	
\begin{align}\label{eq:ellseries}
&\Gamma(-\gamma)\, _2 R_1\left(1, -\gamma, 1, \alpha; w \right)= \nonumber \\ &- \sum_{k=0}^{\infty} \frac{\Gamma(-\gamma-\alpha(k+1))}{\Gamma(1-\alpha(k+1))} (w^{-k-1})+  \frac{(-w^{\gamma/\alpha})}{\alpha }\sum_{k=0}^\infty \frac{\Gamma( (k-\gamma)/\alpha) \Gamma(1-(k-\gamma)/\alpha)}{k! \Gamma(1+\gamma-k)}(-1)^k (-w)^{-k/\alpha} \nonumber \\  &= - \sum_{k=0}^{\infty} \frac{\Gamma(-\gamma-\alpha(k+1))}{\Gamma(1-\alpha(k+1))} (w^{-k-1})+ \sum_{k=0}^\infty \frac{(-1)^k}{k!} \ell(\gamma -k , \alpha , -w)
\end{align}
after using Euler's summation and recalling that $\ell(\cdot, \cdot, \cdot)$ is given by \eqref{eq:ell}. Setting $w=\frac{-\lambda_+}{h^{-\alpha/\kappa}|z|^{\alpha}\Theta_+(z)}$ we obtain further
\begin{align}\label{eq:ellseries2}
&\Gamma(-\gamma)h^{-\gamma/\kappa}\, _2 R_1\left(1, -\gamma, 1, \alpha; \frac{-\lambda_+h^{\alpha/\kappa} }{|z|^{\alpha}\Theta_+(z)} \right)\nonumber \\ &= - \sum_{k=0}^{\infty} \frac{\Gamma(-\gamma-\alpha(k+1))}{\Gamma(1-\alpha(k+1))} \left(\frac{ (- iz)^{\alpha}}{-\lambda_+ }\right)^{k+1}h^{-(\gamma+\alpha(k+1))/\kappa}+ \sum_{k=0}^\infty \frac{(-1)^k}{k!} h^{(\gamma-k)/\kappa} \ell\left(\gamma -k , \alpha , \frac{\lambda_+}{(-iz)^{\alpha}} \right).
\end{align}
If $\alpha + \gamma<1$, observing that $\ell\left(\gamma-k , \alpha , \frac{\lambda_+}{(-iz)^{\alpha}} \right)=(-iz)^{-\gamma}\ell(\gamma-k, \alpha, \lambda_+)$ in the positive part of \eqref{eq:CFtheta0gammaless1} with $\mu=-\mu_0$ we obtain,
\begin{align}\label{eq:ellseries3}
&\delta_+\Bigg( \Gamma(-\gamma)|z|^\gamma \Theta_+(z)h^{-\gamma/\kappa}\, _2 R_1\left(1, -\gamma, 1, \alpha; \frac{-\lambda_+h^{\alpha/\kappa} }{|z|^{\alpha}\Theta_+(z)} \right)- \ell(\gamma,\alpha, \lambda_+) \Bigg)  \nonumber \\ &= \delta_+\Bigg( - (-iz)^{\gamma} \sum_{k=0}^{\infty} \frac{\Gamma(-\gamma-\alpha(k+1))}{\Gamma(1-\alpha(k+1))} \left(\frac{ (- iz)^{\alpha}}{-\lambda_+ }\right)^{k+1}h^{-(\gamma+\alpha(k+1))/\kappa} \nonumber \\ & \phantom{xxxxxxxxxxxxxxxxxxxxxxxxxxx}+ \sum_{k=1}^\infty \frac{(-1)^k}{k!} h^{-k/\kappa} \ell\left(\gamma -k , \alpha , \frac{\lambda_+}{(-iz)^{\alpha}} \right) \Bigg). 
\end{align} The  leading order in  \eqref{eq:ellseries3} corresponds to the term $k=0$ of the first series. We thus have
\begin{align}\label{eq:limitstable}
& \lim_{h \rightarrow \infty} h \, \psi^+(h^{-1/\kappa})=  \delta_+  \lim_{h\rightarrow \infty} h \frac{\Gamma(-\gamma-\alpha)}{\Gamma(1-\alpha)\lambda_+ }(-i z)^{\alpha +\gamma} h^{-(\gamma +\alpha)/\kappa}	=  \delta_+ \frac{\Gamma(-\gamma-\alpha)}{\Gamma(1-\alpha)\lambda_+ }(-i z)^{\alpha + \gamma} 
\end{align}
with the last equality holding if and only if $\kappa=\alpha +\gamma$.  Now it is well-known that (e.g. \citet{sat:99}, Lemma 14.11)
\begin{equation}
(-iz)^{\alpha+\gamma}\Gamma(-\alpha-\gamma)=\int_0^{\infty} \frac{(e^{izx}-1)}{x^{1+\gamma+\alpha}}dx  \qquad \alpha+\gamma \in (0,1), \, z \in \mathbb R.
\end{equation} 
which is the characteristic exponent of a spectrally positive $\alpha+\gamma$ stable r.v. with unit $\delta_+$ and $\mu=\mu^*_0$. Substituting in \eqref{eq:limitstable} produces the positive part of the characteristic exponent of $Z$. Repeating the above for $\psi^-_h$  yields $(iii)$ when $\alpha+\gamma \in (0,1)$.

 If instead $\alpha +\gamma \in (1,2)$ we must use
\eqref{eq:CFtheta0gammaless3} with $\mu=-\mu_1$ and in \eqref{eq:ellseries2} we have, from the relation $\ell\left(\gamma-1 , \alpha , \frac{\lambda_+}{(-iz)^{\alpha}} \right)=(-iz)^{1-\gamma}\ell(\gamma-1, \alpha, \lambda_+)$ that
\begin{align}\label{eq:ellseries4}
&\delta_+ \Bigg( \Gamma(-\gamma)|z|^\gamma \Theta_+(z)h^{-\gamma/\kappa}\, _2 R_1\left(1, -\gamma, 1, \alpha; \frac{-\lambda_+h^{\alpha/\kappa} }{|z|^{\alpha}\Theta_+(z)} \right)- \ell(\gamma,\alpha, \lambda_+)- i z h^{-1/\kappa}\ell(\gamma-1,\alpha, \lambda_+) \Bigg) \nonumber \\ &= \delta_+  \Bigg(-  (-iz)^{\gamma} \sum_{k=0}^{\infty} \frac{\Gamma(-\gamma-\alpha(k+1))}{\Gamma(1-\alpha(k+1))} \left(\frac{ (- iz)^{\alpha}}{-\lambda_+ }\right)^{k+1}h^{-(\gamma+\alpha(k+1))/\kappa}+ \nonumber \\& \phantom{xxxxxxxxxxxxxxxxxxxxxxxxxxxxxxxxxxxx} \sum_{k=2}^\infty \frac{(-1)^k}{k!} h^{-k/\kappa} \ell\left(\gamma -k , \alpha , \frac{\lambda_+}{(-iz)^{\alpha}} \right)\Bigg)
\end{align}
Now, when $\alpha + \gamma \in (0,2)$ again the $k=0$ term in the first series leads, and the statement follows as in \eqref{eq:limitstable} this time observing that
\begin{equation}
(-iz)^{\alpha+\gamma} \Gamma(-\alpha-\gamma)=\int_0^{\infty} \frac{(e^{izx}-1-izx)}{x^{1+\gamma+\alpha}}dx  \qquad \alpha+\gamma \in (1,2), \, z \in \mathbb R,
\end{equation}
which is again the characteristic exponent of a spectrally positive $\alpha+\gamma$ stable r.v. and unit $\delta_+$, but this time with $\mu=-\mu^*_1$.
This concludes the proof of $(ii)$. 

Finally if $\alpha +\gamma>2$ in \eqref{eq:ellseries2} then the leading order corresponds to the term $k=2$ in the second series, so that
\begin{align}\label{eq:limitstableg1}
&\lim_{h \rightarrow \infty} h \, \psi^+(h^{-1/\kappa})= \delta_+ \lim_{h \rightarrow \infty} h \frac{h^{-2/\kappa}}{2} (-iz)^{\gamma} \ell\left(\gamma-2, \alpha,\lambda_+ \right)(-iz)^{2-\gamma}=-\frac{ z^2}{2} \delta_+ \ell \left(\gamma-2, \alpha,\lambda_+ \right)  
\end{align}
provided that $\kappa=2$. Together with the analogous computation for $\psi^-_h$ this completes the proof of $(iii)$ and of the Proposition.

 
\end{proof}

Observe that in view of Proposition \ref{prop:cum} (or Remark \ref{oss:cumrev})   the second condition in $(iii)$ of Proposition \ref{prop:time} is equivalent to the finiteness of the variance.
We see that the familiar short time stable/long time Gaussian behavior of CTS laws (e.g. \citet{kuc+tap:13}) is  reproduced for positive $\theta_\pm$ or when the variance of the  \GTGSo process is finite, according to the CLT intuition. Therefore $(iii)$ precisely account for a persistent power law behavior of the model coupled with Gaussian limit, consistently with the estimates in the empirical studies motivating our work. 

Instead, the case $(ii)$ shows that by using Mittag-Leffler tempering  we can also obtain a stable limit whose stability index is increased by the Mittag-Leffler factor $\alpha$, compared to the short time $\gamma$-stable limit. This regime may also be of interest for application.

Another useful information that can be derived from the measure $R$ is the range of values of $\gamma$ for which  absolute continuity with respect to a stable processes holds. The situation is  akin to the generalized exponential tempering situation described in \citet{gra:16}.

\begin{prop}\label{prop:stableequiv}

Let $X=(X_t)_{t \geq 0}$ be a {\upshape GTGS}$_\gamma(r_\gamma(x; \balpha, \blambda, \btheta, \bdelta), \mu)$ L\'evy process. Assume that  there exists  a second probability measure $P'$ under which  $X$ is a  S$_\gamma(\bdelta, \mu^*)$ L\'evy process, with $\sigma$ given by \eqref{eq:sigma0}. 
  Then $P'$ is absolutely continuous with respect to $P$ if and only if
\begin{equation}
\mu^*=\left\{\begin{array}{ll}
\mu &  \mbox{ if } \gamma \in (0,1), \\
\displaystyle{\mu +\int_{\mathbb R_+} x (\log|x|-1)r_\gamma(x; \balpha, \blambda, \btheta, \bdelta) dx} & \mbox{ if } \gamma=1, \\
\displaystyle{\mu+\Gamma(1-\gamma)\int_{\mathbb R} x \,r_\gamma(x; \balpha, \blambda, \btheta, \bdelta) dx} & \mbox{ if } \gamma \in (1,2)
\end{array} \right. 
\end{equation} and  $\min\{\alpha_+, \alpha_-\}>\gamma/2$. Furthermore, in such case there exists a density L\'evy process $Z=(Z_{t})_{t \geq 0}$ such that for all $t >0$
\begin{equation}
\frac{d P'}{d P}\Big|_{\mathcal F_t}=e^{Z_t}.
\end{equation}  
\end{prop}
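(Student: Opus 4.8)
The plan is to subsume the statement into the equivalence/singularity dichotomy for L\'evy processes (\citet{sat:99}, Theorem 33.1). Under both $P$ and $P'$ the process $X$ has the same, null, Gaussian component, so $P'\ll P$ on each $\mathcal F_t$ is equivalent to $P'\sim P$; and the latter holds precisely when (a) the L\'evy measures $\nu$ of $X$ under $P$ (given by \eqref{eq:onedimlev}) and $\nu'$ of $X$ under $P'$ are equivalent with finite Hellinger-type integral $\int_{\mathbb R}\bigl(e^{\phi(x)/2}-1\bigr)^2\nu(dx)<\infty$, where $e^{\phi}=d\nu'/d\nu$, and (b) a drift-matching identity is satisfied. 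Since $\nu'$ is obtained from $\nu$ by deleting the tempering factor $q$ of \eqref{eq:TGS}, one has $d\nu'/d\nu(x)=1/q(x)$, which lies in $(0,\infty)$ throughout $\mathbb R\setminus\{0\}$ (as $0<E_{\alpha_\pm}(-\lambda_\pm|x|^{\alpha_\pm})<\infty$); hence $\nu\sim\nu'$ is automatic and only the two quantitative conditions must be verified.

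First I would dispose of the Hellinger condition. The elementary identity $q\,(q^{-1/2}-1)^2=(1-q^{1/2})^2$ turns the integral into $\delta_+\int_0^\infty\bigl(1-\sqrt{q_+(x)}\bigr)^2x^{-1-\gamma}\,dx+\delta_-\int_0^\infty\bigl(1-\sqrt{q_-(x)}\bigr)^2x^{-1-\gamma}\,dx$, with $q_\pm(x)=e^{-\theta_\pm x}E_{\alpha_\pm}(-\lambda_\pm x^{\alpha_\pm})$. As $x\to\infty$ one has $q_\pm(x)\to0$ — exponentially if $\theta_\pm>0$, and of order $x^{-\alpha_\pm}$ if $\theta_\pm=0$ by \eqref{eq:MLasymptInf} — so the integrand is $O(x^{-1-\gamma})$ and the tails converge for every $\gamma\in(0,2)$, regardless of $\btheta$ and $\blambda$. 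As $x\to0$, \eqref{eq:MLasympt0} and $\alpha_\pm<1$ give $1-q_\pm(x)\sim\lambda_\pm x^{\alpha_\pm}/\Gamma(1+\alpha_\pm)$ — the Mittag-Leffler term dominating the linear $\theta_\pm x$ contribution — hence $\bigl(1-\sqrt{q_\pm(x)}\bigr)^2\sim\tfrac14\bigl(\lambda_\pm/\Gamma(1+\alpha_\pm)\bigr)^2x^{2\alpha_\pm}$ and the integrand is of order $x^{2\alpha_\pm-1-\gamma}$ at the origin, which is integrable exactly when $2\alpha_\pm>\gamma$. Thus condition (a) amounts to $\min\{\alpha_+,\alpha_-\}>\gamma/2$.

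The drift clause is the delicate step. Here it is essential that GTGS$_\gamma(r_\gamma;\mu)$ and S$_\gamma(\bdelta;\mu^*)$ carry the Rosi\'nski centering — the uncompensated representation for $\gamma\in(0,1)$, the $\I_{\{|x|<1\}}$-compensated one for $\gamma=1$, the fully compensated one for $\gamma\in(1,2)$ — each legitimate because $\int_{\{|x|<1\}}|x|\,\nu(dx)<\infty$ when $\gamma<1$ and $\int_{\{|x|>1\}}|x|\,\nu(dx)<\infty$ when $\gamma>1$, and likewise for $\nu'$. With these conventions the drift part of \citet{sat:99}, Theorem 33.1 (the Gaussian component being null) reads $\mu^*-\mu=0$ for $\gamma\in(0,1)$, $\mu^*-\mu=\int_{\{|x|\le1\}}x\,(\nu'-\nu)(dx)$ for $\gamma=1$, and $\mu^*-\mu=\int_{\mathbb R}x\,(\nu'-\nu)(dx)$ for $\gamma\in(1,2)$; under $\min\{\alpha_\pm\}>\gamma/2$ all of these integrals are absolutely convergent, since near $0$ the signed density of $\nu'-\nu$ is of order $|x|^{\alpha_\pm-1-\gamma}$ with $\alpha_\pm>\gamma/2\ge\gamma-1$ (for $\gamma\le2$), and at infinity the stable tail $|x|^{-1-\gamma}$ is integrable against $|x|$ when $\gamma>1$. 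Performing the inversion $y=1/x$ that relates $\nu$ to the Rosi\'nski density $r_\gamma$ of Proposition \ref{prop:spectral}, each correction rewrites as the corresponding displayed expression for $\mu^*$: it vanishes for $\gamma\in(0,1)$; for $\gamma=1$ the factor $x^{-1}$ in the integrand produces, after the substitution, the logarithmic term $\int x(\log|x|-1)r_\gamma(x)\,dx$; and for $\gamma\in(1,2)$ matching the linear term of the stable exponent brings out the prefactor $\Gamma(1-\gamma)$ in front of $\int_{\mathbb R}x\,r_\gamma(x)\,dx$. These are routine but intricate manipulations, the main bookkeeping being the interplay of truncation conventions with the map $x\mapsto1/x$; this is the step I expect to be the chief obstacle.

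Finally, once $P'\sim P$ is established, \citet{sat:99}, Theorem 33.2 furnishes a L\'evy process $Z=(Z_t)_{t\ge0}$ under $P$ — built from the jumps of $X$ through $\phi(x)=-\log q(x)$, suitably compensated — such that $dP'/dP|_{\mathcal F_t}=e^{Z_t}$, which is precisely the asserted density process and completes the proof.
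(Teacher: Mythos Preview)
Your argument is correct but takes a different and longer route than the paper. The paper does not go back to \citet{sat:99}, Theorem 33.1, but invokes \citet{ros:07}, Theorem 4.1, which is tailored to the TS$_\gamma$ framework: it already contains the drift identities in the displayed form (in terms of the Rosi\'nski density $r_\gamma$) and reduces the measure condition to the single requirement
\[
\int_{\{|x|<1\}}\bigl(1-q(x)\bigr)^2|x|^{-1-\gamma}\,dx<\infty.
\]
The paper then only needs the $x\to0$ expansion $1-q_\pm(x)\sim \lambda_\pm x^{\alpha_\pm}/\Gamma(1+\alpha_\pm)$ to read off $\alpha_\pm>\gamma/2$; the tail and the drift bookkeeping are absorbed into Rosi\'nski's theorem. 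Your Hellinger computation is equivalent to this --- since $(1-\sqrt{q})^2\sim\tfrac14(1-q)^2$ as $q\to1$ the two integrands have the same singular order at the origin --- but you then undertake to re-derive the $\mu^*$ formulas from Sato's general drift condition, which is precisely the work that \citet{ros:07}, Theorem 4.1, has already packaged. What you identify as ``the chief obstacle'' (the truncation/centering bookkeeping and the inversion $x\mapsto1/x$) is therefore avoidable: citing Rosi\'nski's theorem yields those expressions immediately and makes the proof a few lines long.
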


\begin{proof}
Indicating $q$ the GTGS tempering function in Cartesian coordinates,  from \citet{ros:07}, Theorem 4.1, we have that a necessary and sufficient condition for the  absolute continuity  of a TS$_\gamma$ L\'evy process with  respect to the given stable one is
\begin{equation}\label{abscont}
\int_{\{|x|<1\}}  (1-q(x))^2 x^{-\gamma-1} dx <\infty.\end{equation}
That is, for $x>0$
\begin{equation}
\int_0^1 \left(1-  e^{-\theta_+ x}E_{\alpha_+} ( -\lambda_+ \, x^{\alpha_+}) \right)^2 dx < \infty.
\end{equation}
We have, when $x \sim 0$, using \eqref{eq:MLasympt0} 
\begin{equation}
e^{-\theta_+ x} \sim 1-\theta_+ x , \qquad E_{\alpha_+} ( - \lambda_+ \, x^{\alpha_+}) \sim 1 - \frac{\lambda_+ x^{\alpha_+}}{\Gamma(\alpha_+ +1)}
\end{equation}  and hence
\begin{equation}
1-  e^{-\theta_+ x}E_{\alpha_+} ( -\lambda_+ \, x^{\alpha_+}) \sim  \theta_+ x +  \frac{\lambda_+ x^{\alpha_+}}{\Gamma(\alpha_+ +1)}, \qquad x \sim 0.
\end{equation}
Since $\alpha_+ <1$ this implies the following leading order for $x \sim 0$
\begin{equation}
\left(1-  e^{-\theta_+ x}E_{\alpha_+} ( -\lambda_+ \, x^{\alpha_+}) \right)^2 \sim  \frac{\lambda_+^2  x^{2 \alpha_+ }}{\Gamma(\alpha_+ +1)^2}.
\end{equation}

Comparing with \eqref{abscont} we see that the condition for convergence is $\alpha_+> \gamma /2$. The same calculation on $-1<x<0$ in \eqref{abscont} establishes the claim.  
\end{proof}

When comparing among them two TGS L\'evy processes, conditions of absolute continuity are  somewhat analogous to the TS case (see e.g. \citet{con+tan:03}, Example 9.1), but additional constraints on the Mittag-Leffler parameter are present. Below,  spectral measures do not play a role, so we allow asymmetric stability indices.

\begin{prop}\label{prop:mutualequiv}

Let $X=(X_t)_{t \geq 0}$ be a {\upshape GTGS}$_{\bgamma}(m_{\bgamma}(x; \balpha,\blambda, \btheta, \bdelta), \mu)$ L\'evy process. Assume that  there exists  a second probability measure $P'$ under which  $X$  is a {\upshape GTGS}$_{\bgamma'}(m_{\bgamma'}(x; \balpha',\blambda', \btheta', \bdelta'), \mu')$ L\'evy process. Then $P \sim P'$ if and only if $\bgamma=\bgamma'$, $\bdelta=\bdelta'$ and $\min\{\alpha_\pm,\alpha_\pm'\} >\gamma_\pm/2$. Furthermore, in such case
\begin{equation}\label{eq:martdens}
\frac{d P'}{d P }\Big|_{\mathcal F_t}=e^{Z}
\end{equation}
where $Z=(Z_t)_{t \geq 0}$ is the L\'evy process with triplet $(\mu_Z, 0, m_Z(x)dx)$ given by \begin{align}
\mu_Z=&-\int_{\mathbb R} \left(e^{izx}-1-izx\I_{\{|x|<1\}}\right) m_Z(x)dx \\ m_Z(x)&=m_\bgamma(l^{-1}(x); \balpha,\blambda, \btheta, \bdelta)
\end{align} where

\begin{equation}\label{eq:martdensl}
l(x)=(\theta_+-\theta'_+)x+\log\left( \frac{E_{\alpha'_+} (-\lambda'_+x^{\alpha'_+})}{E_{\alpha_+}(-\lambda_+x^{\alpha_+})} \right)\I_{\{x>0\}}+(\theta'_--\theta_-)x+\log\left( \frac{E_{\alpha'_-} (-\lambda'_-|x|^{\alpha'_-})}{E_{\alpha_-}(-\lambda_-|x|^{\alpha_-})} \right)\I_{\{x<0\}}.
\end{equation}

\end{prop}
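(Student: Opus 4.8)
The plan is to invoke the dichotomy of \citet{sat:99}, Theorem 33.1, for the laws of L\'evy processes: since both GTGS triplets have no Gaussian part, $P\sim P'$ holds if and only if \textbf{(a)} the L\'evy measures $\nu(dx)=m_{\bgamma}(x;\balpha,\blambda,\btheta,\bdelta)\,dx$ and $\nu'(dx)=m_{\bgamma'}(x;\balpha',\blambda',\btheta',\bdelta')\,dx$ are equivalent with $\int_{\mathbb R}\bigl(e^{\varphi(x)/2}-1\bigr)^2\nu(dx)<\infty$, where $\varphi=\log(d\nu'/d\nu)$, and \textbf{(b)} $\mu'-\mu=\int_{\{|x|<1\}}x\,(\nu'-\nu)(dx)$ with the integral absolutely convergent. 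Since $e^{-\theta x}E_{\alpha}(-\lambda x^{\alpha})>0$ for every $x$, the densities $m_{\bgamma},m_{\bgamma'}$ are strictly positive on $(0,\infty)$ exactly when the relevant $\delta_+$ is positive (and on $(-\infty,0)$ when $\delta_-$ is), so $\nu\sim\nu'$ already forces $\delta_\pm=0\iff\delta'_\pm=0$; assuming all of $\delta_\pm,\delta'_\pm$ positive (otherwise one restricts to the supported half-line), $\nu\sim\nu'$ is automatic and $\varphi(x)=\log\bigl(m_{\bgamma'}(x)/m_{\bgamma}(x)\bigr)$.

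The heart of the matter is the finiteness of $I:=\int_{\mathbb R}\bigl(\sqrt{m_{\bgamma'}(x)}-\sqrt{m_{\bgamma}(x)}\bigr)^2\,dx$, which I would split at $|x|=1$. On $\{|x|>1\}$, \eqref{eq:MLasymptInf} gives $m_{\bgamma}(x)\sim\frac{\delta_\pm}{\lambda_\pm\Gamma(1-\alpha_\pm)}\,e^{-\theta_\pm|x|}|x|^{-1-\gamma_\pm-\alpha_\pm}$, and likewise for $m_{\bgamma'}$, so $m_{\bgamma},m_{\bgamma'}\in L^1(\{|x|>1\})$ and the cross term $\sqrt{m_{\bgamma}m_{\bgamma'}}$ decays at least like $|x|^{-1-(\alpha_\pm+\alpha'_\pm)/2}$; hence the tail part of $I$ converges for \emph{all} parameter values. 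On $\{|x|<1\}$ I would use \eqref{eq:MLasympt0}: $e^{-\theta x}E_{\alpha}(-\lambda x^{\alpha})\to1$, so $m_{\bgamma}(x)\sim\delta_\pm|x|^{-1-\gamma_\pm}$ near $0$ and $\bigl(\sqrt{m_{\bgamma'}}-\sqrt{m_{\bgamma}}\bigr)^2\sim|x|^{-1}\bigl(\sqrt{\delta'_\pm}|x|^{-\gamma'_\pm/2}-\sqrt{\delta_\pm}|x|^{-\gamma_\pm/2}\bigr)^2$. If $\gamma_\pm\neq\gamma'_\pm$ the larger exponent yields a non-integrable $|x|^{-1-\max\{\gamma_\pm,\gamma'_\pm\}}$ singularity, forcing $\bgamma=\bgamma'$; with that, the leading term is $(\sqrt{\delta'_\pm}-\sqrt{\delta_\pm})^2|x|^{-1-\gamma_\pm}$, still non-integrable unless $\bdelta=\bdelta'$. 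Conversely, once $\bgamma=\bgamma'$, $\bdelta=\bdelta'$, one has $\varphi=l$ with $l$ as in \eqref{eq:martdensl} and $l(x)\to0$; since $\log E_{\alpha}(-\lambda x^{\alpha})\sim-\lambda x^{\alpha}/\Gamma(1+\alpha)$ by \eqref{eq:MLasympt0}, the power $|x|^{\min\{\alpha_\pm,\alpha'_\pm\}}$ governs $l$ near $0$ (the linear $\theta$-contribution being of the same or lower order), whence $(e^{\varphi/2}-1)^2\sim\varphi^2/4=O(|x|^{2\min\{\alpha_\pm,\alpha'_\pm\}})$ and $\int_{\{|x|<1\}}(e^{\varphi/2}-1)^2\,d\nu\asymp\int_{0^+}|x|^{2\min\{\alpha_\pm,\alpha'_\pm\}-1-\gamma_\pm}\,d|x|$, finite iff $\min\{\alpha_\pm,\alpha'_\pm\}>\gamma_\pm/2$. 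This delivers the claimed equivalence of conditions.

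For \textbf{(b)} and the density process: once $I<\infty$, the integral in \textbf{(b)} is absolutely convergent, since by Cauchy--Schwarz $\int_{\{|x|<1\}}|x|\,|m_{\bgamma'}-m_{\bgamma}|\,dx\le\bigl(\int_{\{|x|<1\}}(\sqrt{m_{\bgamma'}}-\sqrt{m_{\bgamma}})^2dx\bigr)^{1/2}\bigl(\int_{\{|x|<1\}}|x|^2(\sqrt{m_{\bgamma'}}+\sqrt{m_{\bgamma}})^2dx\bigr)^{1/2}$ and $\int_{\{|x|<1\}}|x|^2\,d\nu,\int_{\{|x|<1\}}|x|^2\,d\nu'<\infty$; so \textbf{(b)} becomes an affine constraint tying $\mu'$ to $\mu$, which is part of the hypothesis $P\sim P'$. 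With absolute continuity in force, the change-of-measure theory for L\'evy processes (\citet{sat:99}, Theorems 33.1--33.2) gives a L\'evy process $Z=(Z_t)_{t\ge0}$ with no Gaussian part and $dP'/dP|_{\F_t}=e^{Z_t}$; its jumps are $\Delta Z_s=\varphi(\Delta X_s)=l(\Delta X_s)$, so the L\'evy measure of $Z$ is the image of $\nu=m_{\bgamma}(x)\,dx$ under $l$, with density $m_Z$ determined by $m_Z(l(x))|l'(x)|=m_{\bgamma}(x)$, i.e.\ $m_Z(y)=m_{\bgamma}(l^{-1}(y))|(l^{-1})'(y)|$ on the range of $l$; finally $\mu_Z$ is fixed by the martingale normalisation $E[e^{Z_1}]=1$, equivalently $\psi_Z(-i)=0$, which is precisely the displayed value of $\mu_Z$. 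Assembling the positive and negative half-line contributions completes the proof.

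The main obstacle is the small-$|x|$ analysis of $I$: it must be carried out in three stages --- first reducing to $\bgamma=\bgamma'$, then to $\bdelta=\bdelta'$, then to $\min\{\alpha_\pm,\alpha'_\pm\}>\gamma_\pm/2$ --- and at each stage the exact cancellation in $\sqrt{m_{\bgamma'}}-\sqrt{m_{\bgamma}}$ has to be controlled rigorously, most conveniently via $|\sqrt a-\sqrt b|=|a-b|/(\sqrt a+\sqrt b)$ together with the Taylor remainders of $e^{-\theta x}$ and of $E_{\alpha}(-\lambda x^{\alpha})$ from \eqref{eq:MLasympt0}. A secondary subtlety is that $l$ need not be globally injective, so $l^{-1}$ and $m_Z$ in \eqref{eq:martdens}--\eqref{eq:martdensl} should be read through the image measure rather than a literal inverse; this is harmless for the existence of $Z$, which rests only on the general theory.
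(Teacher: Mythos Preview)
Your proposal is correct and follows essentially the same route as the paper: invoke Sato's equivalence criterion via the Hellinger-type integral $\int(\sqrt{m'}-\sqrt{m})^2\,dx$, show the tail contribution is always finite by \eqref{eq:MLasymptInf}, and run the three-stage small-$|x|$ analysis (first forcing $\bgamma=\bgamma'$, then $\bdelta=\bdelta'$, then extracting $\min\{\alpha_\pm,\alpha'_\pm\}>\gamma_\pm/2$ from \eqref{eq:MLasympt0}). You are in fact more careful than the paper on two points it glosses over: the drift compatibility condition \textbf{(b)} and the observation that the stated formula $m_Z(x)=m_{\bgamma}(l^{-1}(x))$ omits the Jacobian and tacitly assumes $l$ is invertible, so that $m_Z$ should properly be read as the pushforward of $\nu$ under $l$.
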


\begin{proof}
 According to \citet{sat:99}, Theorem 33.2, $P \sim P'$ if and only if the Hellinger distance between the absolutely continuous L\'evy measures $m(x)dx$ and $m'(x)dx$  is finite, that is 

\begin{equation}
\int_{\mathbb R} \left(\sqrt{m(x)}-\sqrt{m'(x)} \right)^2 dx <\infty.
\end{equation}
Furthermore $l(x)=\log(m'(x)/m(x))$, and then \eqref{eq:martdensl} is clear by  \eqref{eq:onedimlev}, once the first assertion is proved.

Now letting $I(x)=\left(\sqrt{m(x)}-\sqrt{m'(x)} \right)^2$ and using  \eqref{eq:MLasymptInf}, for large $x$   we have
\begin{align}\label{hel0} 
I(x) \sim \left( \sqrt{\frac{\delta_+}{\lambda_+\Gamma(1-\alpha_+) x^{\gamma_+ +\alpha_++1}} }e^{-x\theta_+/2}  - \sqrt{\frac{\delta'_+}{\lambda'_+ \Gamma(1-\alpha'_+) x^{\gamma'_+ +\alpha'_++1}} }e^{-x\theta'_+/2} \right)^2 
\end{align}
which is always integrable at $+\infty$, whatever the value of $\theta_+$ by \eqref{eq:MLasymptInf}. The corresponding convergence holds for large negative $x$.

In a right neighborhood of 0 we have the  condition
\begin{align}\label{hel1}
\int_0^1 &\left( \sqrt{\delta_+ \frac{E_{\alpha_+}(-\lambda_+ x^{\alpha_+})}{x^{1+\gamma_+}}}e^{-x\theta_+/2}    -   \sqrt{\delta'_+ \frac{E_{\alpha'_+}(-\lambda'_+ x^{\alpha'_+})}{x^{1+\gamma'_+}}}e^{-x\theta'_+/2}\right)^2 dx  
< \infty.
\end{align}
We  write
\begin{equation}\label{hel2}I(x) = \delta'_+ \frac{e^{-\theta_+' x}E_{\alpha_+'}(-\lambda_+' x^{\alpha_+'})}{ x^{\alpha_+'+1}}\left(x^{\frac{\gamma_+'-\gamma_+}{2}}  \sqrt{\frac{\delta_+ E_{\alpha_+}(-\lambda_+ x^{\alpha_+})  }{\delta'_+ E_{\alpha_+'}(-\lambda_+' x^{\alpha_+'})} } e^{\frac{-\theta_+-\theta_+'}{2}x}   -1 \right)^2, \quad x>0. \end{equation}

Assuming $\gamma'_+<\gamma_+$ and $\delta_+ \neq \delta_-$, since $\exp(\cdot)\sim E_\alpha(\cdot) \sim 1$ as $x \rightarrow 0^+$, we have $I(x) \sim \delta_+ x^{-1-\alpha_+'-\gamma_+ +\gamma'_+}$, which diverges. The case $\gamma_+ >\gamma'_+$ is similarly excluded. Therefore, for convergence we must have $\gamma_+=\gamma_+'$. Once this is further assumed, if we allow $\delta_+ \neq \delta_+'$ then $I(x)\sim (\sqrt{\delta_+}-\sqrt{\delta_+'})^2x^{-1-\alpha'_+}$, again a divergent integrand. Therefore for convergence it is necessary that both  $\gamma_+=\gamma_+'$ and $\delta_+=\delta_+'$. Expanding the right hand term in  \eqref{hel2} in its McLaurin series using \eqref{eq:MLasympt0} we obtain, with $\alpha_*=\min\{\alpha_+, \alpha_+' \}$
\begin{align}
I(x)\sim&\frac{ \delta_+}{x^{\gamma_++1}}\left( \left( \frac{1- \frac{\lambda_+ x^{\alpha_+}}{\Gamma(1+\alpha_+)}}{1-\frac{\lambda'_+ x^{\alpha'_+}}{\Gamma(1+\alpha'_+)}} \right)^{1/2} -1 \right)^2 \sim \frac{ \delta_+}{x^{\gamma_++1}}\left(      \left(1- \frac{ \lambda_+ x^{\alpha_+}}{2\Gamma(1+\alpha_+)} \right)\left(1+   \frac{\lambda'_+ x^{\alpha'_+}}{2 \Gamma(1+\alpha'_+)} \right) -1 \right)^2 \nonumber \\ & \sim \frac{ \delta_+}{x^{\gamma_++1}} \left( c x^{\alpha_*} \right)^2 = \frac{  c^2 \delta_+}{x^{\gamma_++1 - 2\alpha_*}}
\end{align}
for some $ c \neq 0$. The convergence condition is thus $\gamma_+ +1 - 2\alpha_*<1$, i.e. $\alpha_*>\gamma_+/2$. Repeating in a left neighborhood of 0 proves the result.
\end{proof}
We observe that the condition of equivalence of Proposition \ref{prop:stableequiv} is embedded in Proposition \ref{prop:mutualequiv}. Therefore, in order to  have mutual equivalence between two GTGS$_\gamma$ distributions under different measures, it is necessary that both are equivalent to the same $\gamma$-stable law.

\section{Conclusions}

Motivated by the investigation of power-law tempering in physical and economic systems, we have proposed a general notion of geometric tempering using an exponentially-dampened Mittag-Leffler function. In particular  Propositions  \ref{prop:cum} and \ref{prop:time}  make viable the introduction of processes with Gaussian limit but heavy tails at all time lags in the form of 
GTGS$^0$ laws, entailing (very slow) CLT convergence. Also, when $\theta \neq 0$, familiar exponential/semi-heavy tails are obtained, but  in principle a faster reversion to Gaussian ought to be observable (because of \eqref{eq:MLasympt0}) compared to the classic CTS. 

To  illustrate such effects  we plot in Figure 1 some comparisons  between L\'evy densities/ tempering functions of S/CTS symmetric laws with a GTGS, $\theta >0$, and GTGS$^0$ counterpart.  Around 0 the L\'evy measures are all unbounded, with the stable law showing the fastest blow-up rate. All of the tempered stable laws considered have visually similar asymptotics, and the specific ordering can be worked by comparing and combining the  leading order $e^{-\theta x} \sim 1-\theta x$ with \eqref{eq:MLasympt0}. The tails of the tempered stable L\'evy densities, according to the theory developed thus far, are all lighter than that of the S$^s$ distribution. However the CTS$^s_\gamma$ and GTGS$^{s}_\gamma$ distributions have similar tails, whereas those of the GTGS$^{0,s}_\gamma$  follow a markedly heavier power law, as predicted by our results. Furthermore, since for the chosen parameters  we have $\alpha+\gamma>2$, the variance of the law is finite. In the right panel we visualize the corresponding tempering functions. We notice a stronger tempering around 0 of the GTGS and GTGS$^0$ law compared to CTS, but after the cross-over, as $|x|$ gets larger the tails of GTGS tempering approach those of the exponential, while those of the pure Mittag-Leffler tempering function remain much heavier.

In terms of possible extensions of the GTGS distribution class, we notice that one limitation of Mittag-Leffler tempering is that the tail index of the probability laws is always confined to be smaller than 3, which implicates diverging higher moments, at odds with some of the estimates in financial data (e.g. \citet{gopikrishnan1999scaling}) which instead find evidence of finite skewness. In view of \eqref{eq:MLasymptInf}, this could be resolved  considering geometric tempering with the three-parameter Mittag-Leffler function, which is known to be completely monotone under some parameter constraints (see \citet{gor+al:21}). We leave this direction of research for further investigations.

\begin{figure}
\includegraphics[scale=0.55]{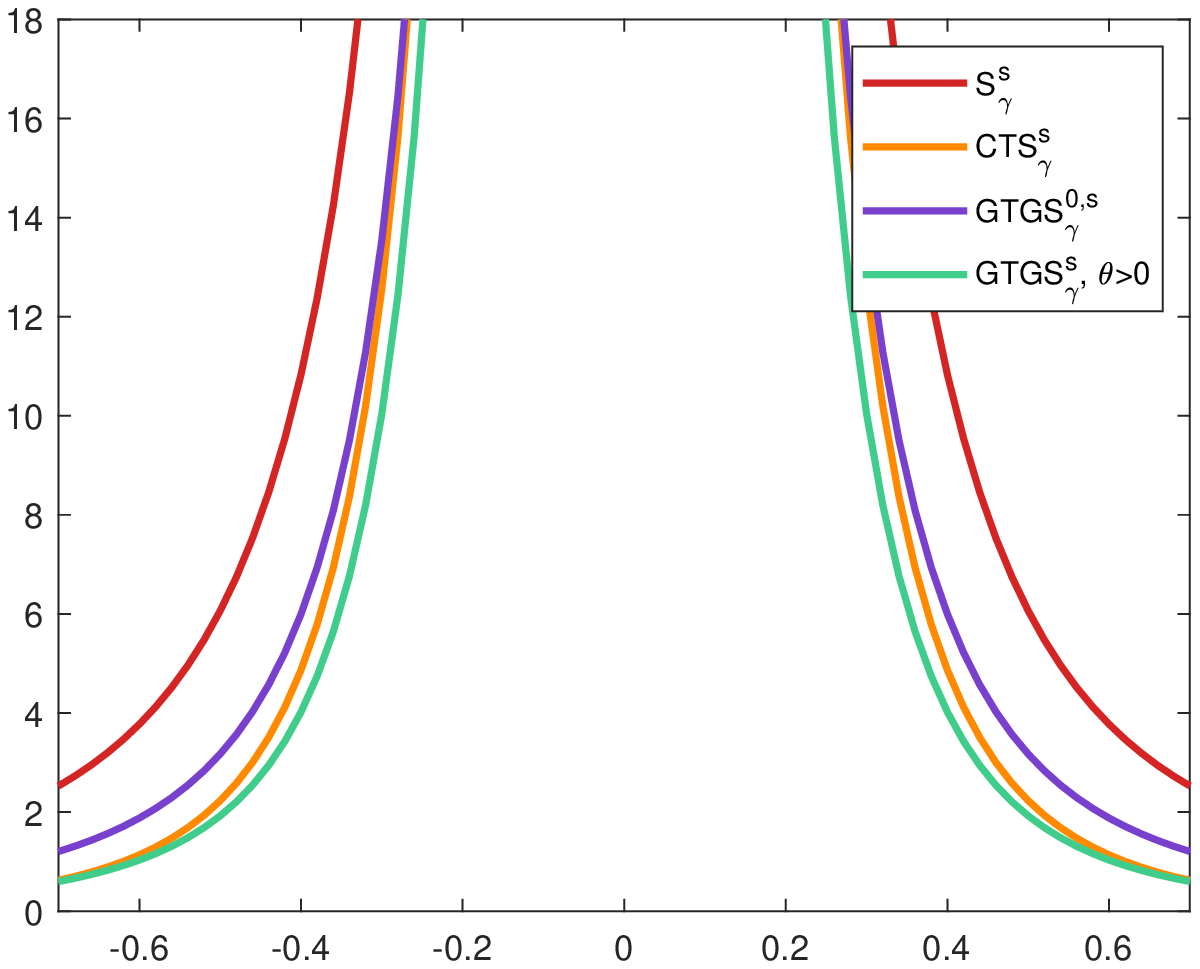}
\includegraphics[scale=0.55]{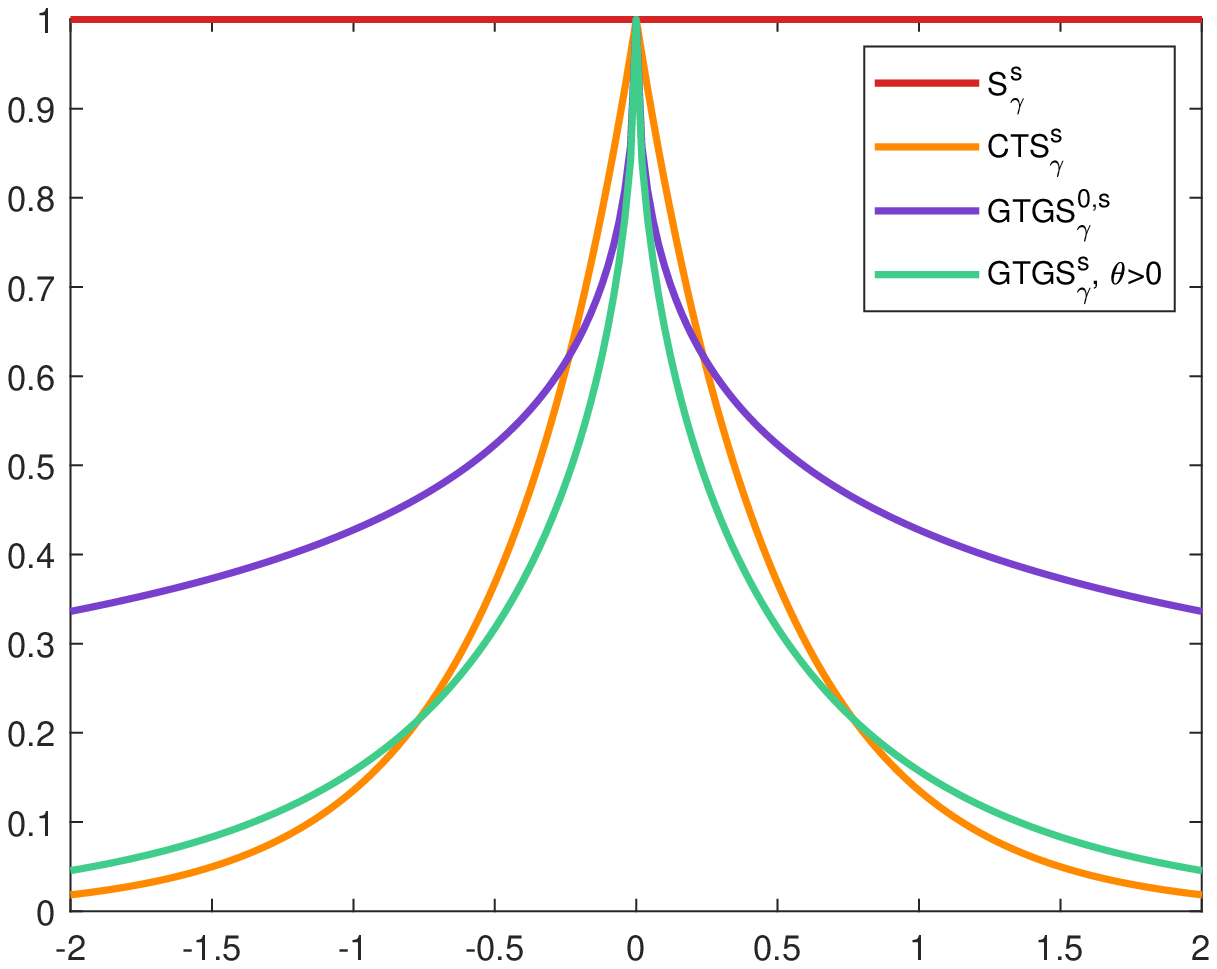}
\caption{\small Left panel:  $S$,   CTS$^s_\gamma$, GTGS$^{s}_\gamma$, and GTGS$^{0,s}_\gamma$  L\'evy densities. Right panel: corresponding tempering functions.  
 The parameters are $ \gamma=1.6, \alpha=0.5, \lambda=\theta=\delta=1 $.}\end{figure}


	\begin{center}
	\begin{table}
		\begin{tabular} {|l|l|c|}
			\hline
			\textbf{Symbol}& \textbf{Description} & \textbf{Dimension}
						\\ \hline
\BG &  bilateral gamma & 1 \\ 
\BL &  bilateral Linnik & 1 \\ 
\CTS & classical  tempered-stable & 1 \\
G$(\lambda,\delta)$ &   gamma & 1 \\
\GTGS & generalized tempered geometric stable & 1  			\\
 GTGS$_{\gamma}(\sigma, \alpha, \lambda, \theta; \mu  )$	& GTGS, spherical parametrization & $d$    \\			GTGS$_\gamma(r_\gamma(x; \balpha, \blambda,\btheta, \bdelta); \mu)$ &  GTGS, Rosi\'nsky parametrization & 1 \\
\GTGSo & purely Mittag-Leffler  tempered GTGS & 1		\\	
\ML & Mittag-Leffler & 1 \\
\PL & positive Linnik & 1\\
 \Sb &			$\alpha$-stable, classical parametrization & 1   \\ \S  &			$\alpha$-stable, L\'evy parametrization & 1  \\
 S$_\alpha(\sigma, \mu)$  & $\alpha$-stable, spherical parametrization & $d$  
\\
\TGS &   tempered  geometric stable & 1 \\	
TGS$(\sigma, \alpha, \lambda, \theta; \mu  )$ &  TGS, spherical parametrization  & $d$  \\ 
\TGSo &  purely Mittag-Leffler tempered TGS  & 1 \\	
\TPL &   					 tempered positive Linnik & 1  \\
TS$_{\gamma}(Q; \mu  )$	& Rosi\'nsky  tempered stable, spectral parametrization & $d$  \\		
TS$_{\gamma}(R; \mu  )$	& Rosi\'nsky  tempered stable, Rosi\'nsky parametrization & $d$  \\
		\hline 
					\end{tabular}
					\caption{\small{Distribution list. When appearing, the superscripts $s, +, -$ stand respectively for the symmetric, spectrally positive and spectrally negative version of the corresponding distribution.}}
					\end{table}
	\end{center}

\section*{Acknowledgments}
Part of this work has been presented at the Third Italian Meeting on Probability and Mathematical Statistics, Bologna, June 2022 and at the FraCalMo Workshop, Bologna, October 2022. The author would like to thank Enrico Scalas, Aleksei Chechkin, Lucio Barabesi, Luisa Beghin, Federico Polito and Piergiacomo Sabino for the helpful comments and discussions.

\section*{Declarations}

 This research did not receive any specific grant from funding agencies in the public, commercial, or not-for-profit sectors. The author declares no competing interests.

\bibliographystyle{apalike}
\bibliography{Bibliography}

\end{document}